\documentclass{amsart}

\usepackage{amsfonts,amssymb,stmaryrd,amscd,amsmath,latexsym,amsbsy,bbold}
\usepackage{amsfonts,amssymb,graphicx}

\usepackage[curve, knot,frame]{xy} 
\usepackage{amssymb}
\usepackage{amsfonts}
\usepackage{latexsym}
\usepackage{epstopdf}

\newtheorem{theorem}{Theorem}[section]
\newtheorem{lemma}[theorem]{Lemma}
\newtheorem{claim}[theorem]{Claim}
\newtheorem{proposition}[theorem]{Proposition}
\newtheorem{corollary}[theorem]{Corollary}
\theoremstyle{definition}
\newtheorem{definition}[theorem]{Definition}
\newtheorem{example}[theorem]{Example}
\newtheorem{remark}[theorem]{Remark}

\newcommand{\edit}[1]{}
\newcommand{\Vect}{\operatorname{Vect}}

\newcommand{\End}{\operatorname{End}}
\newcommand{\Hom}{\operatorname{Hom}}

\newcommand{\Res}{\mathrm{Res}\,}

\newcommand{\CoEnd}{\mathrm{CoEnd}\,}

\newcommand{\GL}{\mathrm{GL}}

\newcommand{\tr}{\mathrm{tr}\,}
\newcommand{\op}{\mathrm{op}}
\newcommand{\ad}{\operatorname{ad}}
\newcommand{\trivial}{\mathbb{1}}
\newcommand{\Id}{\mathrm{id}}
\newcommand{\kk}{\mathfrak{k}}
\newcommand{\cH}{\mathcal{H}}
\newcommand{\cC}{\mathcal{C}}
\newcommand{\cK}{\mathcal{K}}
\newcommand{\cD}{\mathcal{D}}
\newcommand{\ZZ}{\mathbb{Z}}
\newcommand{\cE}{\mathcal{E}}
\newcommand{\cS}{\mathcal{S}}
\newcommand{\cR}{\mathcal{R}}
\newcommand{\U}{\mathbf{U}}
\newcommand{\A}{\mathbf{A}}
\newcommand{\B}{\mathbf{B}}
\newcommand{\cU}{\mathcal{U}}

\renewcommand{\O}{\mathcal{O}}
\newcommand{\cI}{\mathcal{I}}
\newcommand{\cW}{\mathcal{W}}
\newcommand{\g}{\mathfrak{g}}
\newcommand{\tq}{\mathtt{q}}

\newcommand{\ot}{\otimes}
\newcommand{\bt}{\boxtimes}
\newcommand{\id}{\operatorname{id}}

\newcommand{\CC}{\mathbb{C}}

\newcommand{\cB}{\mathcal{B}}
\newcommand{\hW}{\widehat{\mathcal{W}}}
\newcommand{\hB}{\widehat{\mathcal{B}}}
\newcommand{\dB}{\widetilde{\mathcal{B}}}
\newcommand{\tA}{\widetilde{A}}

\newcommand{\coev}{\operatorname{coev}}
\newcommand{\ev}{\operatorname{ev}}
\newcommand{\I}{\mathcal{I}}

\def\HH{\hbox{${\mathcal H}$\kern-5.2pt${\mathcal H}$}}

\begin{document}
\title[Quantum symmetric pairs and double affine Hecke algebras]{Quantum symmetric pairs and representations of double affine Hecke algebras of type $C^\vee C_n$}

\author[David Jordan]{David Jordan}

\address{Department of Mathematics,
Massachusetts Institute of Technology,
Cambridge, MA  02139, USA}
\email{djordan@math.mit.edu}

\author[Xiaoguang Ma]{Xiaoguang Ma}

\address{Department of Mathematics,
Massachusetts Institute of Technology,
Cambridge, MA  02139, USA}
\email{xma@math.mit.edu}

\begin{abstract}
We build representations of the affine and double affine braid groups and Hecke algebras of type $C^\vee C_n$, based upon the theory of quantum symmetric pairs $(\U,\B)$.  In the case $\U=\cU_\tq(\mathfrak{gl}_N)$, our constructions provide a quantization of the representations constructed by Etingof, Freund and Ma in \cite{EFM}, and also a type $C^\vee C_n$ generalization of the results in \cite{J}.
\end{abstract}
\subjclass{Primary 17B37; Secondary 20C08}
\keywords{Quantum D-modules, double affine Hecke algebras}
\maketitle


\section{Introduction}

In \cite{Ch}, Ivan Cherednik introduced the double affine Hecke algebra (abbreviated DAHA, also known as the Cherednik algebra), as a generalization of the affine Hecke algebra (AHA) associated to an affine root system.  The DAHA is a quotient of the group algebra of the double affine braid group by additional Hecke relations.  Cherednik used these algebras to prove Macdonald's constant term conjecture for Macdonald polynomials.  In \cite{S}, Sahi constructed a six-parameter DAHA associated to the root system $C^\vee C_n$, and used it to analyze the non-symmetric Macdonald and Koornwinder polynomials.   

The degenerate affine Hecke algebra (dAHA) of a Coxeter group was defined by Drinfeld and Lusztig (\cite{Dri}, \cite{Lus}).  It is a certain multi-parameter deformation of the smash product of the group algebra of the Coxeter group with the coordinate ring of its reflection representation.  The degenerate double affine Hecke algebra (dDAHA) of a root system was introduced by Cherednik (see \cite{Ch}). It is a certain multi-parameter deformation of the smash product of the affine Weyl group with the coordinate ring of its reflection representation.   The relationship between these algebras and their non-degenerate counterparts is analogous to that between $\cU(\g)$ and $\cU_\tq(\g)$: the former may be recovered from the latter by taking quasi-classical limits with respect to the defining parameters.

Motivated by conformal field theory, Arakawa and Suzuki (\cite{AS}) constructed a functor 
from the category of Harish-Chandra $\cU(\mathfrak{gl}_{N})$-bimodules to the category of 
representations of the dAHA of type $A_{n}$ for each $n\geq 1$. This construction was extended to the dDAHA of type $A_n$ by Calaque, Enriquez, and Etingof in \cite{CEE}, using the theory of ad-equivariant $D$-modules on the algebraic group $G=\GL_N$.

In \cite{EFM}, these constructions were extended to encompass $BC_n$ root systems.  More precisely, they considered the symmetric pair of Lie algebras $(\g,\mathfrak{k})=(\mathfrak{gl}_{N},\mathfrak{gl}_{p}\times\mathfrak{gl}_{q})$\footnote{all Lie algebras are over $\CC$, and $N=p+q$.} associated to the real symmetric pair $(G,K)=(U(N),U(p)\times U(q))$.  For each $n$, there were constructed functors from the category of Harish-Chandra modules for $(G,K)$ to the representations of the dAHA, and from the category of $K$-equivariant $D$-modules on $G/K$ to the representations of the dDAHA of type $BC_n$.  

In \cite{J}, the constructions of \cite{CEE} were quantized to encompass the theory of quantum groups, and the non-degenerate DAHA's of type $A_n$.  Namely, for a quasi-triangular Hopf algebra $\U$, an integer $n\geq 1$, and $V \in \U-$mod, there were constructed functors from the category of $\U$-modules to the category of representations of the affine braid group, and from the category ad-equivariant quantum $D_\U$-modules to the representations of the double affine braid group.  In case the braiding on $V$ satisfies a Hecke relation, the functors take values in representations of the AHA and DAHA, respectively.  Moreover it was shown that in the case $\U=\cU_\tq(\mathfrak{sl}_N)$, the quasiclassical limit $\tq\mapsto 1$ recovers the construction of \cite{CEE}.

In this paper, we quantize the constructions of \cite{EFM}, by appealing to the theory of quantum 
symmetric pairs, as pioneered by Letzter \cite{L1,L2}, and developed further in \cite{DS,Kol,OS}, 
among others.  To a simple Lie algebra $\mathfrak{g}$ and an involution 
$\theta: \g \to \g$ is associated the (classical) symmetric pair 
$(\mathfrak{g}, \mathfrak{g}^{\theta})$.  
Here $\g^\theta$ is the subalgebra of $\g$ whose elements are fixed by $\theta$. 
The quantum analogue of $\cU(\mathfrak{g}^{\theta})$ is a left (alternatively, right) 
coideal subalgebra $\B\subset \cU_\tq(\mathfrak{g})$, 
which specializes to $\cU(\mathfrak{g}^{\theta})$ as $\tq\to1$. 
The pair $(\cU_\tq(\mathfrak{g}), \B)$ is called a quantum symmetric pair.

For the simple Lie algebras, such pairs were explicitly described by Letzter (\cite{L1, L2}): interestingly, it was shown that in the case of $(\mathfrak{gl}_N,\mathfrak{gl}_p\times\mathfrak{gl}_q)$, there is a not a unique quantization, but rather a one-parameter family, $\{\B_\sigma\}_{\sigma\in\CC}$, of subalgebras, essentially because the involution $\theta$ is replaced by a one-parameter family of automorphisms of $\cU_\tq(\g)$ (see \cite{L1}, p. 50).  In this case, the algebras $\B_\sigma$ are known as quantum Grassmannians, and were first introduced by Dijkhuizen, Noumi and Sugitani in the paper \cite{DNS}.

Basic algebraic properties of quantum symmetric pairs, and their connection to the so-called reflection equations were established in \cite{KoSt}.  In particular, it was explained there how so-called Noumi coideal subalgebras can be constructed canonically, starting from a character of the braided dual, $\A$, 
of $\U$.  In the case $\U=\cU_\tq(\mathfrak{gl}_N)$, characters of the reflection equation algebra were classified by Mudrov \cite{Mud}, and it was explained in \cite{KoSt} how to extend these to its localization, $\A$.

Our general setup is as follows.  We let $\U$ be a quasitriangular Hopf algebra.  
We choose a character $f:\A\to\CC$, and denote by $\B_f \subset \U$ the corresponding 
left Noumi coideal subalgebra.  
We further choose a character $\chi:\B_f\to\CC$.  For each $n\geq 1$, 
we construct with this data a functor from the category of $\U$-modules to representations of 
the affine braid group of type $C^\vee C_n$.  
Next, we choose a second character $g:\A\to\CC$, and denote by $\B'_g$ the corresponding 
right Noumi coideal subalgebra.  We let $\chi':\B'_g\to\CC$ be a character.  
To this data, we associate a functor from the category of $D_\U$-modules (satisfying some technical conditions) to the category to representations of the double affine braid group of type $C^\vee C_n$, by analogy with \cite{EFM}.   
Our main results are Theorems \ref{easyprop}, \ref{maintheorem}, \ref{maintheoremAHA}, 
and \ref{maintheoremDAHA}, where we detail the construction of the functors, 
and apply them in examples to obtain representations of the AHA and DAHA, respectively.  
We obtain representations of the DAHA with five continuous and one discrete parameter: 
one parameter for each subalgebra, one parameter for each character, 
the overall quantization parameter $\tq$, and finally the integers $N$ and $p$ defining the classical pair; 
for the AHA we have three continuous parameters: we choose one subalgebra, its character, 
and we have the overall quantization parameter $\tq$.

The first part of the paper contains the basic constructions, and is organized as follows.  
In Section 2, we recall the definition of the braid groups and Hecke algebras of type $C^{\vee} C_{n}$.
In Section 3, we recall the construction of the braided coordinate algebra, 
and its relation to reflection equations.
In Section 4, we recall definitions and notation for quasi-triangular Hopf algebras, Noumi co-ideal subalgebras, and their diagrammatic calculus.
In Section 5, we construct representations of the affine braid group using the machinery in the preceding sections.
In Section 6, we recall the construction of quantum $D$-modules and construct representations of  double affine braid group from them.

The remainder of the paper is devoted to connections to the AHA and DAHA coming from quantum groups, and is considerably more technical.
In Section 7, we recall the quantum group $\cU_\tq(\mathfrak{gl}_N)$, 
the classical symmetric pair $(\mathfrak{gl}_N,\mathfrak{gl}_p\times\mathfrak{gl}_q)$, 
and its quantum analog. 
In Sections 8-9, we show that the constructions of Sections 5 and 6 take 
values in representations of the AHA and DAHA, respectively, when applied in the context of Section 7.  Finally, in Section 10, we compute the quasi-classical limits of our construction and show that they degenerate to those of \cite{EFM}.  

\subsection*{Acknowledgments}  
The authors would like to thank Pavel Etingof for his guidance, Ting Xue for helpful discussions, and Stefan Kolb for many helpful comments on our first draft, and for pointing us to Theorem \ref{JLthm}.  Finally, we thank the anonymous referee for thorough reading and many helpful suggestions and corrections.  The work of both authors was supported by NSF grant DMS-0504847.

\section{Double affine braid group and Hecke algebra of type $C^\vee C_n$}\label{DABGsec}

\subsection{The root system $\Phi^{C^\vee C_n}$ of type $C^\vee C_n$}\label{rootcc}

Let $\cE_n=\mathbb{R}^n$, with standard basis $\varepsilon_{i}$ and inner product 
$(\varepsilon_{i},\varepsilon_j)=\delta_{ij}$. 
We define the set of roots 
$\Pi^{C^\vee C_n}=\{\pm\varepsilon_{i}\pm\varepsilon_{j}\}_{i\neq j}\cup
\{\pm\varepsilon_{i}\}\cup\{\pm2\varepsilon_{i}\} \subset \cE_n$.  
Then $\Phi^{C^\vee C_n}:=(\cE_n,\Pi^{C^{\vee}C_{n}})$ defines a non-reduced root system.
We choose as a set of positive simple roots:
$$\Pi_{+}^{C^\vee C_n}=\{\alpha_{i}=\varepsilon_{i}-\varepsilon_{i+1}\}_{i=1}^{n-1}
\cup\{\alpha_{n}=\varepsilon_{n}\}.$$
Let $\alpha_0$ denote the additional affine positive root.
Then $\{\alpha_{i}, i=0,\ldots, n\}$ form the affine root system of type $C^\vee C_n$.
The corresponding affine Dynkin diagram is 
\begin{equation*}
\xy <1cm,0cm>: 
(0,1)*=0{\bullet}="*" ; 
(1,1)*=0{\bullet}="*", **@{=};  
(2,1)*=0{\bullet}="*", **@{-}; 
(3,1)*=0{\bullet}="*", **@{.}; 
(4,1)*=0{\bullet}="*", **@{-}; 
(5,1)*=0{\bullet}="*", **@{=}; 
(0,0.6)*+{^0};
(1,0.6)*+{^1};
(2,0.6)*+{^2};
(3,0.6)*+{^{n-2}};
(4,0.6)*+{^{n-1}};
(5,0.6)*+{^{n}};
(0.5,1)*+{<};
(4.5,1)*+{>}.
\endxy 
\end{equation*}
For each $\alpha\in\Pi^{C^\vee C_n}$, we $s_\alpha$ denote the corresponding reflection, and let $s_i:=s_{\alpha_i}$.
\begin{definition}
The \emph{affine Weyl group}, 
$\hW_{n}$, of type $C^\vee C_n$, is the group generated by $s_{0},\ldots, s_n$, with relations $s_{i}^{2}=1$, and the braid relations:
$$
s_is_j=s_js_i, (|i-j|>1), \quad s_is_{i+1}s_i = s_{i+1}s_is_{i+1}, (i\in\{1,\ldots,n-1\}),\
$$
$$
s_0s_1s_0s_1=s_1s_0s_1s_0\quad s_{n-1}s_ns_{n-1}s_n=s_ns_{n-1}s_ns_{n-1}.
$$
The \emph{Weyl group}, $\cW_{n}$, of type $C^\vee C_n$,
is the subgroup generated by elements $s_{1}, \ldots, s_{n}$.
\end{definition}
\subsection{Double affine braid groups and Hecke algebras in type $C^\vee C_n$}\label{sec:dabgs}

\begin{definition} \label{defn:affdefn}
The \emph{affine braid group}, $\hB_{n}$ of type $C^\vee C_n$ is the group generated by
$T_{0}, \ldots, T_{n}$, subject to the braid relations: 
\begin{align}
T_iT_j=T_jT_i,\,\ (|i-j|>1), \quad T_iT_{i+1}T_i = T_{i+1}T_iT_{i+1},\,\ (i\in\{1,\ldots,n-1\}), {\label{eqn:braid}}\\
T_0T_1T_0T_1=T_1T_0T_1T_0\quad T_{n-1}T_nT_{n-1}T_n=T_nT_{n-1}T_nT_{n-1}, {\label{eqn:braid0n}}.
\end{align}
 The \emph{braid group}, $\cB_{n}$, is the subgroup generated by 
$T_{1}, \ldots, T_{n}$.
\end{definition}

\begin{definition}\label{defn:dBdefn}The double affine braid group, $\dB_{n}$, is the group generated by the affine braid group $\hB_n$ and $K_0$, subject to the cross relations:
\begin{eqnarray}
K_0T_i&=&T_iK_0,\,\, (i\in \{2,\ldots, n\});\nonumber\\
T_1K_0T_1K_0&=&K_0T_1K_0T_1;\nonumber\\
T_0 T_1^{-1}K_0T_1&=&T_1^{-1}K_0T_1T_0.\label{T0T11K0T1reln}
\end{eqnarray}
\end{definition}
\begin{remark} This presentation for the double affine braid group is different from that in \cite{S} and \cite{EGO}, and was chosen to allow the most concise constructions for the current work.  It is closely related to presentations in \cite{IS}.  In Section \ref{sec:newgen}, it is shown that our presentation agrees with the earlier ones.
\end{remark}

For later use, we introduce the following notations:
\begin{eqnarray*}
T_{(i\cdots j)}&:=&
\left\{\begin{array}{ccc}T_{i}T_{i+1}\cdots T_{j-1}, & & j> i>0, 
\\T_{i-1}\cdots T_{j+1}T_{j}, & & i> j>0,\\1,&&i=j.\end{array}\right.\\
P_i&:=&T_i\cdots T_{n-1}T_nT_{n-1}\cdots T_i=T_{(i\cdots n)}T_nT_{(n\cdots i)}.
\end{eqnarray*}

\begin{remark} \label{rem:confn} The group $\dB_n$ admits the following geometric description.  Let $E$ be an elliptic curve with coordinate $z$, and let $$\widetilde{\operatorname{Conf}}_n(E):=\{(z_1,\ldots,z_n) \in E^n\,\,|\,\, z_i\neq \pm z_j, \textrm{ for $i\neq j$, and $z_i\neq -z_i$.}\}$$
$$\operatorname{Conf}_n(E):=\widetilde{\operatorname{Conf}}_n(E)/((\mathbb{Z}/2\mathbb{Z})^n\rtimes S_n),$$  where each $\mathbb{Z}/2\mathbb{Z}$ replaces $z_i$ with $-z_i$, and $S_n$ permutes the factors. Then one can check that $\pi_1(\operatorname{Conf}_n(E))\cong\dB_n$.  This is a double affine version of the usual identification \cite{Br} of $\cB_n$ with $\pi_1(\mathfrak{h}_{reg}/\cW_n)$, where 
$$\mathfrak{h}_{reg}:=\{(z_1,\ldots,z_n) \in \CC^n\,\,|\,\, z_i\neq0, z_i\neq \pm z_j, \textrm{ for $i\neq j$}\}.$$
See Section \ref{sec:newgen} for further discussion.
\end{remark}

We fix a field $\cK$, and let $v,t,t_0,u_0,t_n,u_n\in\cK^\times$.\footnote{For historical reasons, it is common to replace these parameters formally with their square roots.  For simplicity, we have dropped this convention.}
For an operator $X$ and a parameter $x$, we use the notation $X\sim x$ to mean that $X$ satisfies the Hecke relation $(X-x)(X+x^{-1})=0$.

\begin{definition}
The {\em double affine Hecke algebra},  $\HH_{n}(v,t,t_{0},t_{n},u_{0},u_{n})$, of type $C^\vee C_n$,
is the quotient of the group algebra $\cK[\dB_{n}]$ by the Hecke relations:
$$T_0 \sim t_0,\quad T_n \sim t_n, \quad K_0 \sim u_n,\quad (vK_0P_1T_0)^{-1}\sim u_0,\quad T_1,\ldots, T_{n-1} \sim t.$$

The \emph{affine Hecke algebra}, $\cH_{n}(t, t_{0},t_{n})$, of type $C^\vee C_n$, is the quotient of the group algebra $\cK[\hB_n]$ by the relations:
$$T_0 \sim t_0,\quad T_n \sim t_n,\quad T_1,\ldots, T_{n-1} \sim t.$$

The \emph{Hecke algebra}, $H_{n}(t,t_{n})$, of type $C^\vee C_n$, is the quotient of the group algebra $\cK[\cB_n]$ by the relations:
$$T_n \sim t_n,\quad T_1,\ldots, T_{n-1} \sim t.$$
\end{definition}

\begin{remark} 
$\cH_n(t,t_0,t_n)$ and $H_n(t,t_n)$ are subalgebras of 
$\HH_n(v,t,t_0,t_n,u_0,u_n)$ in the obvious way.\end{remark}

\begin{remark} There are three variants of the above setup, depending on the choice of $\cK$.  One may consider: $\cK=\CC$, and the parameters are numerical, $\cK=\CC(v,t,t_0,t_n,u_0,u_n)$ and the parameters are indeterminates, $\cK=\CC((\hbar))$ and the parameters are formal Laurent series.  The latter will appear most notably in Section \ref{sec:degeneration}, and in that case, we also complete all algebras with respect to $\hbar$.
\end{remark}

\section{Characters of the braided dual and the reflection equation}\label{CoEndSec}
In this section we recall a categorical construction of a certain quantization of the algebra of functions on an algebraic group, which Majid dubbed the covariantized coordinate algebra, or simply the braided group.  For clarity of presentation, we recall some elementary constructions in the theory of tensor categories and phrase our constructions in these terms; of course, we could just as well phrase constructions in terms of generators and relations (see Example \ref{exp:algebra}).  For details about locally finite tensor categories, see \cite{De1}, \cite{De2}.

\begin{definition} An abelian category $\cC$ is called \emph{locally finite} if every object $X\in \cC$ has finite length, and all $\Hom$ spaces are finite dimensional.\end{definition}

\begin{example} The category of finite dimensional modules over an algebra (possibly infinite dimensional) is a locally finite abelian category, equipped with a functor to vector spaces.
\end{example}

Let $(\cC,\ot,\sigma)$ be a locally finite braided tensor category, and let $\cC\bt\cC$ denote its Deligne tensor square.  If $\cC$ is semisimple, then $\cC\bt\cC$ is also, with simples $X\bt Y$, for $X,Y\in\cC$ simple.  In any case, we will refer to objects in $\cC\bt\cC$ of the form $V\bt W$ as \emph{pure} objects: every object in $\cC\bt\cC$ is a finite iterated extension of pure objects.   $\cC\bt\cC$ is also a tensor category with tensor product $\ot_2$, given on pure objects by:
$$(V\bt W)\ot_2(X\bt Y) := (V\ot X)\bt(W\ot Y).$$
$\cC\bt\cC$ becomes a braided tensor category with braiding $\sigma_2:=\sigma\bt\sigma$.  The tensor product on $\cC$ gives a functor
$$T: \cC\bt\cC\to \cC, \quad V\bt W \mapsto V\ot W.$$  We can equip $T$ with the structure of a tensor functor by using the braiding $\sigma_{W,X}$: \small
$$ \beta: T(V\bt W) \ot T(X\bt Y) = V\ot W\ot X \ot Y \xrightarrow{\sigma_{W,X}} V\ot X\ot W \ot Y =T(V\bt W \ot_2 X\bt Y).$$
\normalsize
There is an important ind-algebra\footnote{An ind-object in $\cC$ is a direct limit of objects in $\cC$, but not, in general, itself an object of $\cC$. Rather it is an object in a completion of $\cC$ with respect to inductive limits; this distinction is not particularly important for us.} $\A=\CoEnd(\cC)$ in $\cC\bt\cC$, first constructed by Majid \cite{Maj}.  
As we will use it extensively in what follows, we recall its construction here.  To begin, we consider the (very large) ind-object $\tA$ in $\cC \boxtimes \cC$:
$$\tA=\bigoplus_{V\in \cC} V^*\bt V.$$

\noindent Let $Q\subset \tA$ denote the sum over all $V,W,$ and $\phi:V\to W$ of the images in $\tA$ of
\begin{align}\label{eqn:Arelns}
x_\phi := \phi^*\boxtimes \id_V - \id_W^*\boxtimes\, \phi \in \Hom(W^*\bt V,V^*\bt V \oplus W^*\bt W).
\end{align}
As an ind-object in $\cC$, we define $\A:=\tA/Q$.  Note that for any object $V\in\cC$, we have a canonical map $i_{V}:V^*\bt V\to \A$.
A multiplication $\mu:\A\ot_2 \A\to \A$ is given on each $V^*\bt V$, $W^*\bt W$ by
$$\mu: (V^*\ot W^*)\bt (V\ot W) 
\xrightarrow{\sigma_{V^*,W^*}\bt \id} (W^*\ot V^*)\bt(V\ot W)\cong (V\ot W)^*\bt (V\ot W),$$
which makes $\A$ into a unital associative algebra in $\cC\bt\cC$ (one uses the braid relations on the first factor). 
By tensor functoriality, $T(\A)$ also becomes a unital associative algebra in $\cC$ with multiplication $T(\mu)\circ \beta$.
Furthermore, $T(\A)$ carries the structure of a coalgebra in $\cC$, 
with comultiplication defined on generators $V^*\ot V$:
$$\Delta := \id_V^*\ot \coev_V \ot \id_V: V^*\ot V\to V^*\ot V\ot V^*\ot V\subset T(\A)\ot T(\A).$$ 

\noindent The counit is defined on generators by the pairing $\ev: V^*\ot V \to \trivial$.  
Any object in $\cC$ is naturally both a right and left comodule over $T(\A)$ via the maps
\begin{eqnarray}
\Delta_V^R&:=&\coev_V\ot\id: V \to V\ot V^*\ot V\subset V\ot T(\A),
{\label{Rcomod}}\\
\Delta_V^L&:=&\id\ot\coev_{^*V}: V \to V\ot ^*V\ot V\subset T(\A)\ot V.{\label{Lcomod}}
\end{eqnarray}

Finally, we have the antipode map $S:T(\A)\to T(\A)$ defined on generators by
$$S|_{V^*\ot V}:=(u_V\ot \id)\circ \sigma_{V^*,V}:V^*\ot V \to V^{**}\ot V^*,$$
\noindent where $u_V:V\to V^{**}$ is the Drinfeld element (see, e.g. \cite{KlSch}, p. 247). Together these maps make $T(\A)$ into a braided Hopf algebra in $\cC$, 
as defined by Majid \cite{Maj}. Note that $\Delta^L=\sigma_{V,\A}\circ (\id\ot S)\circ \Delta^R$.

\begin{remark} A more concise description of $\A$ may be given in the language of module categories.  For a $\cC$-module category $\mathcal{M}$, and $M,N\in\mathcal{M}$, we let $\underline{\Hom}(M,N) \in\cC$ denote the representing object for the functor $\Hom_\mathcal{M}(\bullet\ot M,N)$ (called the \emph{inner Homs} from $M$ to $N$).  When $M=N$, $\underline{\Hom}(M,M)$ has a natural algebra structure (see \cite{EO} for details). Any tensor category $\cC$ has the structure of a $\cC\bt\cC^{\ot-\op}$ module-category, given by $(X\bt Y)\ot M:=X\ot M\ot Y$.  Thus we have an algebra $\A':=\underline{\Hom}(\trivial,\trivial) \in \cC\bt\cC^{\ot-\op}$; $\A'$ represents the functor taking $X\bt Y$ to the co-invariants of $X\ot Y$.     Finally $\A$ is the $\cC\bt\cC$ algebra equivalent to $\A'$ via the functor $\id\bt\sigma:\cC\bt\cC\to \cC\bt\cC^{\ot-\op}$.  We will not use this construction of $\A$ in later sections, but rather its explicit presentation in terms of the relations of equation \eqref{eqn:Arelns}.
\end{remark}

Key to applications in Lie theory and quantum groups is the observation that 
when $\cC$ is semi-simple, $\A$ admits the following Peter-Weyl decomposition:

\begin{proposition}  Suppose that $\cC$ is semi-simple.  Then we have:
$$\A \cong \bigoplus_{V \mathrm{ simple}} V^*\bt V,$$
where the sum counts each isomorphism class of simple objects exactly once.
\end{proposition}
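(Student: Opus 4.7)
The plan is to interpret the proposition as computing the coend $\A = \int^{V} V^{*} \bt V$ in the semisimple case, where it reduces to a sum over isomorphism classes of simples. Fix representatives $\{S_\lambda\}$ of the simples, set $B := \bigoplus_\lambda S_\lambda^* \bt S_\lambda \subset \tA$, and let $\Psi: B \to \A$ be the composition of this inclusion with the quotient map. I would prove $\Psi$ is an isomorphism in two steps: surjectivity by reducing every $V^* \bt V$ modulo $Q$ into $B$, and injectivity by constructing an explicit splitting.

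For surjectivity, take $V \in \cC$ and choose a decomposition $V \cong \bigoplus_i V_i$ into simples with inclusions $\iota_i: V_i \hookrightarrow V$ and projections $\pi_i: V \twoheadrightarrow V_i$ satisfying $\sum_i \iota_i \pi_i = \id_V$. For $f \bt v \in V^* \bt V$, writing $v = \sum_i \iota_i \pi_i(v)$ and applying the defining relation $x_{\iota_i}$ termwise replaces each $\id_{V^*} \bt \iota_i$ with $\iota_i^* \bt \id_{V_i}$, yielding $f \bt v \equiv \sum_i \iota_i^*(f) \bt \pi_i(v) \pmod{Q}$, which now lives in $\bigoplus_i V_i^* \bt V_i$. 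A further application of the relation for the chosen isomorphism $V_i \xrightarrow{\sim} S_{\lambda_i}$ moves each summand into $B$.

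For injectivity, fix decompositions $V \cong \bigoplus_i S_{\lambda_i^V}$ for every object $V$, with their associated $\iota_i^V, \pi_i^V$, and define
\[
\pi(f \bt v) := \sum_i (\iota_i^V)^*(f) \bt \pi_i^V(v) \in B, \qquad f \bt v \in V^* \bt V.
\]
Then $\pi$ restricts to the identity on each summand $S_\lambda^* \bt S_\lambda$, so once $\pi(Q) = 0$ is established, $\pi$ descends to a left inverse of $\Psi$. To verify this on a generator $x_\phi$ with $\phi: V \to W$, introduce the components $c_{ij} := \pi_j^W \phi \iota_i^V: S_{\lambda_i^V} \to S_{\lambda_j^W}$; by Schur's lemma these vanish unless $\lambda_i^V = \lambda_j^W$, in which case $c_{ij}$ and $c_{ij}^*$ both act as multiplication by the same scalar. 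Using the decomposition $\phi = \sum_{i,j} \iota_j^W c_{ij} \pi_i^V$, both $\pi(\phi^*(g) \bt v)$ and $\pi(g \bt \phi(v))$ expand into sums whose $(i,j)$-terms agree on the diagonal indices where $\lambda_i^V = \lambda_j^W$ and are zero elsewhere, so their difference vanishes termwise.

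The main obstacle is essentially bookkeeping: the splitting $\pi$ depends on arbitrary choices of decompositions, but Schur's lemma makes the identity $\pi(Q) = 0$ independent of those choices, so any such splitting suffices. I would not expect serious technical obstacles beyond organizing this calculation carefully, as the result is the standard coend computation for a semisimple braided tensor category.
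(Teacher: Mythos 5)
Your surjectivity argument matches the paper's exactly: first reduce modulo $Q$ along the inclusions and projections onto simple summands, then along chosen isomorphisms onto fixed representatives. What you add is the injectivity half, which the paper's proof silently omits --- its two sentences only establish that $\bigoplus_\lambda S_\lambda^*\bt S_\lambda$ surjects onto $\A$, not that the map is one-to-one. Your explicit splitting $\pi = \sum_i (\iota_i^V)^*\bt \pi_i^V$ and the verification that $\pi(Q)=0$ via Schur's lemma are correct and supply the missing piece; the termwise check (off-diagonal components $c_{ij}$ of $\phi$ vanish, diagonal ones act by the same scalar on $S_\lambda$ and on $S_\lambda^*$) is exactly the right way to see that the two expansions of $\pi\circ x_\phi$ agree. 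The one point to phrase carefully is that $\pi$ is a morphism in $\cC\bt\cC$ rather than a map on ``elements,'' but your description makes the intended morphism unambiguous, and the fact that any choice of decompositions works (as you note, Schur's lemma makes the verification choice-independent) is the key observation. In sum: same route as the paper for surjectivity, plus a complete injectivity argument that the paper leaves to the reader.
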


\begin{proof}
Apply the relations in equation \eqref{eqn:Arelns} to isomorphisms $\phi: V\to W$, 
to reduce the sum to isomorphism classes of objects $V$.   
Apply equation \eqref{eqn:Arelns} to the projections and inclusions of simple components, 
to further reduce the sum to the simple objects $V$.\end{proof}

\section{Quasi-triangular Hopf algebras}\label{sec:qtHA}
For the rest of the paper, we work under the assumption that $\cC$ is a locally finite braided tensor subcategory of the category of finite dimensional complex representations of a quasi-triangular Hopf algebra $\U$.  We denote by $F$ the corresponding tensor functor to vector spaces.  For any $\U$-module $V\in \cC$, we denote the action by $\rho_{V}: \U\to \End_\CC(V)$.

\subsection{The universal R-matrix and L-operators}
Recall (see, e.g. \cite{KlSch} for details) that a quasi-triangular Hopf algebra is a Hopf algebra $\U$, with an invertible element 
$\cR=\sum_{i}r_{i}\otimes r'_{i}\in \widehat{\U\otimes \U}$
\footnote{For an algebra $A$, let $\hat A$ denote its profinite completion, i.e. the completion in the topology in which a basis of neighborhoods of zero is formed by the annihilators of finite dimensional modules.  In other words, $\sum_ka_k\in \hat{A}$ if, and only if, for all $V\in A$-mod finite dimensional, $a_kV=0$, for $k\gg0$.
}, called the universal $R$-matrix, such that
$\Delta^{\mathrm{cop}}(u)=\cR \Delta(u)\cR^{-1},$ for all $u\in \U,$
and
$$(\Delta\otimes \Id)(\cR)=\cR_{13}\cR_{23},\quad
(\Id \otimes\Delta)(\cR)=\cR_{13}\cR_{12},$$
where $\cR_{12}=\sum_{i}r_{i}\otimes r'_{i}\otimes 1$,
$\cR_{13}=\sum_{i}r_{i}\otimes 1\otimes r'_{i}$, and
$\cR_{23}=\sum_{i}1\otimes r_{i}\otimes r'_{i}$. 

The braiding in $\cC$ is given by
\begin{eqnarray}{\label{eqn:braiding}}
\sigma_{V,W}=\tau_{V, W}\circ R_{V,W}: 
V\otimes W\xrightarrow{\cong}W\otimes V,
\end{eqnarray}
for any $V, W\in \cC$. Here $R_{V,W}:=\rho_{V}\otimes\rho_{W}(\cR)$, $\tau_{V,W}$ is the flip operator 
$V\ot W\to W\ot V, v\ot w\mapsto w\ot v$.
We will suppress ``$\ot\id$" from morphisms on tensor products when it is clear from context (e.g. $\sigma_{V,W}:=\id\ot\sigma_{V,W}:\bullet\ot V\ot W\to \bullet\ot W\ot V$).

\begin{remark}
Usually $\cR$ is assumed to lie in $\U\ot \U$ rather than its completion.  However many examples - in particular those coming from quantum groups -  fall into this more general context, so we adopt this definition.  One could alternatively work with comodules over co-quasitriangular Hopf algebas, but we prefer the present, equivalent, formalism.
\end{remark}

For any $\U$-module $V$, we define the 
``$L$-operators'':
\begin{eqnarray*}
L_{V}^{+}&=(\Id \otimes \rho_{V})(\cR) &\in \U\ot \End_{\mathbb{C}}(V), \\
L_{V}^{-}&=(\rho_{V}\otimes \Id)(\cR^{-1}) &\in \End_{\mathbb{C}}(V)\otimes \U.
\end{eqnarray*}
For a basis of $V$, $\{e_{i}\}$, we define elements $l_{ij}^{V\pm}\in \U$
by 
\begin{eqnarray}{\label{eqn:L}}
L_{V}^{+}(1\otimes e_{j})=\sum_{i}l_{ij}^{V+}\otimes e_{i}, \quad\text{ and }\quad 
L_{V}^{-}(e_{j}\otimes 1)=\sum_{i}e_{i}\otimes l_{ij}^{V-}.
\end{eqnarray}

We have:
\begin{equation}\label{eqn:lcoprod}\Delta(l_{ij}^{V\pm})=\sum_k l_{ik}^{V\pm}\ot l_{kj}^{V\pm}.\end{equation}

\subsection{The CoEnd algebra $\A$}{\label{indalg}}

A fiber functor on $\cC\bt\cC$ is defined by $F_2:=F\circ T:\cC\bt\cC\to \Vect$. 
Now let $\A=\CoEnd(\cC)$ be the ind-algebra in $\cC$ defined in Section \ref{CoEndSec}.
Then $F_2(\A)$ becomes an algebra in the usual sense (i.e. in the category of vector spaces), by tensor functoriality.  

\begin{remark} 
In this case, it is well known that $F_2(\A)$ is isomorphic as a coalgebra to the restricted dual $\U^\circ$ of $\U$, and that the product in $F_{2}(\A)$ is twisted from that of $\U^\circ$ by a certain cocycle built from the braiding, hence the name ``braided dual".
\end{remark}  

For any $V\in \cC$, recall the comodule maps  $\Delta^R_V, \Delta^L_V$ defined in equations \eqref{Rcomod}, \eqref{Lcomod}. Fixing a basis of $V$, we can write them as matrices, with coefficients in $F_2(\A)$:
$$
\Delta^R_V=\sum_{i,j=1}^{\dim V} E_i^j\ot a_j^i(V), \quad
\Delta^L_V=\sum_{i,j=1}^{\dim V} \tilde{a}_j^i(V) \ot E_i^j.
$$
Here $E_i^j$ is the matrix $E_i^jv_k=\delta_{jk}v_i$.  Now suppose $V, W\in \cC$ with choosen basis.  Define 
\begin{eqnarray*}
C^{R}_{V}&:=&\sum_{i,j=1}^{\dim V} E_i^j\ot \id\ot a_j^i(V)\in \End_\CC(V)\ot \End_\CC(W)\ot T(\A),\\
C^{R}_{W}&:=&\sum_{i,j=1}^{\dim W} \id\ot E_k^l \ot a_l^k(W)\in \End_\CC(V)\ot \End_\CC(W)\ot T(\A).
\end{eqnarray*}
Similarly, we have operators $C^L_V,C^L_W$ defined using $\Delta^L_V,\Delta^L_W$ instead. 

\begin{theorem}[\cite{Maj},\cite{DKM}.  See \cite{J}, Proposition 2.14 for a short proof.]\label{Jrefleqn}
For any $V, W\in \cC$, the generators $V^*\ot V$ and $W^*\ot W$ in $F_2(\A)$ 
satisfy the relations of the reflection equation algebra:
\begin{equation}
\sigma_{W,V} C^{R}_V \sigma_{V,W} C^{R}_W = C^{R}_W \sigma_{W,V} C^{R}_V \sigma_{V,W},\label{eqn:refleqnA}
\end{equation}
\begin{equation}\label{eqn:refleqnA2}
\sigma_{W,V} C^{L}_W \sigma_{V,W} C^{L}_V 
= C^{L}_V \sigma_{W,V} C^{L}_W \sigma_{V,W}.
\end{equation}
\end{theorem}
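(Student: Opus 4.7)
My plan is to recognize both sides of the reflection equation \eqref{eqn:refleqnA} as two expressions for a single categorical morphism on $V \otimes W$, namely the right $T(\A)$-coaction on $V \otimes W$ assembled from $\Delta^R_V$ and $\Delta^R_W$ via the multiplication in $T(\A)$. Since this morphism is uniquely determined, the equality of the two expressions follows.

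The key observation is that the multiplication on $T(\A)$ is defined as $T(\mu) \circ \beta$, where $\beta = \id \otimes \sigma_{V, W^*} \otimes \id$ rearranges tensor factors and $\mu$ itself contains the braiding $\sigma_{V^*, W^*}$ on the dual side. Thus the product $a^i_j(V) \cdot a^k_l(W)$ in $T(\A)$ already incorporates two braidings. When one writes out $C^R_V \sigma_{V,W} C^R_W$ and $C^R_W \sigma_{W,V} C^R_V$ as string diagrams in $\cC$, the outer braidings $\sigma_{V,W}$ and $\sigma_{W,V}$ combine with these internal braidings to yield the same composite morphism $V \otimes W \to V \otimes W \otimes T(\A)$, up to the common outer $\sigma_{W,V}$ appearing on the left of each side. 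The two sides of \eqref{eqn:refleqnA} are therefore equal.

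Concretely, I would draw the composite on each side as a string diagram in $\cC$, with strands labeled by $V$, $W$, together with the ``matrix coefficient'' strands $V^*$, $V$ (from $\Delta^R_V$) and $W^*$, $W$ (from $\Delta^R_W$), and verify that the two diagrams are related by a single application of the hexagon axiom, applied to the crossing of the $V^*$ and $W^*$ strands. The main obstacle is purely combinatorial bookkeeping: the identity involves four braidings and two coevaluation insertions, and while it is transparent in diagrammatic form it is unwieldy in linear notation; organizing the computation at the level of morphisms rather than components is essential.

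The second identity \eqref{eqn:refleqnA2} follows from the analogous argument with $\Delta^L_V$ and $\coev_{{}^*V}$ in place of $\Delta^R_V$ and $\coev_V$; equivalently, one applies the first identity inside the opposite category $\cC^{\op}$, where the roles of left and right duals are exchanged.
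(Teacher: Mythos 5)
Your high-level idea of interpreting both sides of \eqref{eqn:refleqnA} as incarnations of the right coaction on $V\ot W$ is in the right spirit, but as written the proposal misses the essential ingredient, and the step you single out (``a single application of the hexagon axiom'') cannot carry the argument on its own. Before passing to the quotient by $Q$, the $T(\A)$-coefficients of the left side are the products $a^i_j(V)\cdot a^k_l(W)$, which under $\mu$ land in the summand $(V\ot W)^*\bt(V\ot W)$ of $\tA$, whereas those of the right side are $a^k_l(W)\cdot a^i_j(V)$, which land in the \emph{distinct} summand $(W\ot V)^*\bt(W\ot V)$. No amount of hexagon or naturality moves can relate morphisms landing in two different direct summands of $\tA$; the two sides are genuinely unequal in $T(\tA)$. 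What makes them equal in $T(\A)$ is precisely the relation $x_{\sigma_{V,W}}\in Q$ from \eqref{eqn:Arelns}, obtained by taking $\phi=\sigma_{V,W}\colon V\ot W\to W\ot V$, which identifies $\sigma_{V,W}^*\bt\id_{V\ot W}$ with $\id_{(W\ot V)^*}\bt\sigma_{V,W}$ inside $\A$ and thereby lets you carry the braiding across the $\bt$ and pass between the two summands. This is not bookkeeping; it is the entire content of the theorem --- the reflection equation does \emph{not} hold for an arbitrary braided bialgebra with two right comodules. Once the relation \eqref{eqn:Arelns} is applied, the remaining verification is indeed an isotopy of decorated diagrams (hexagon, naturality, compatibility of the braiding with $\coev$), and this is precisely what the cited proof in \cite{J}, Proposition 2.14, carries out. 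Your write-up should make the invocation of \eqref{eqn:Arelns} explicit and central.

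For \eqref{eqn:refleqnA2}, passing to $\cC^{\op}$ is plausible, but a more direct route is available since the paper records $\Delta^L_V=\sigma_{V,\A}\circ(\id\ot S)\circ\Delta^R_V$, and $S$ is an anti-homomorphism of $T(\A)$; either way, take care to distinguish ${}^*V$ from $V^*$, since \eqref{Lcomod} uses the left dual and the two are related only via the Drinfeld element, not by a canonical identification.
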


\begin{example} 
If we take $\cC$ to be the symmetric category of finite dimensional $\cU(\g)$-modules, 
then the resulting algebra $F_2(\A)$ is the coordinate algebra $\O(G)$ 
for the connected, simply connected algebraic group with Lie algebra $\g$.\end{example}

\begin{example}\label{exp:algebra}
If we instead take $\cC$ to be the category of finite dimensional, type I $\cU_\tq(\mathfrak{gl}_{N})$-modules (see Section \ref{sec:QG}), the resulting algebra $F_2(\A)$ is Majid's covariantized coordinate algebra.  $F_2(\A)$ is twist equivalent (though not isomorphic) to the usual dual quantum group $\O_\tq(G)$, and has been suggested (\cite{Maj}, \cite{DM1}) as a preferable replacement for $\O_\tq(G)$ in the context of braided geometry, as it is constructed to be covariant for the coadjoint action of $\cU_\tq(\g)$.

We can write a presentation of $F_2(\A)$ explicitly as follows.  It is well-known that in this case, $\cC$ is generated as a tensor category by the defining representation $\CC^N$ with highest weight $(1,0,\ldots, 0)$, together with the dual of the determinant representation $\Lambda_\tq^N(\CC^N)$.  It follows immediately that $F_{2}(\A)$ is generated as an algebra by the elements $a_{f,v}, f\in (\CC^N)^*,v\in \CC^N,$ subject to the relations \eqref{eqn:refleqnA} with $V=W=\CC^N$, and the inverse of the central element $\operatorname{det}_\tq$.  Even more explicitly, we can choose the standard basis $\{e_i\}$ of weight vectors for $V_{0}$, and its dual basis $\{e^i\}$ for $V_{0}^*$, and set $a^i_j:=a_{e^i,e_j}.$  Then $F_{2}(\A)$ is the algebra generated by the $a^{i}_{j}$ and $\operatorname{det}_\tq^{-1}$, subject to relations:
\begin{equation}\sum R^{ik}_{ms} a^s_l R^{lm}_{un} a^n_v = \sum a^i_l R^{lk}_{mn} a^n_s R^{sm}_{uv}.
\end{equation}
As has been noted in many places, these are precisely the so-called ``reflection equations".
\end{example}

\subsection{Characters of $F_2(\A)$}
Now suppose that $f:F_2(\A)\to \CC$ is a character (homomorphism of algebras).  For $V\in \cC$, let $J_V := \sum_{i,j} f(a_j^i(V))E_i^j$, and $J'_V:=\sum_{i,j} f(\tilde{a}_j^i(V))E_i^j.$ Then we have the following well-known

\begin{proposition}
For all $V,W\in \cC$, we have the following relation in $\End_\CC(V\ot W)$:
\begin{align}\label{eqn:refleqn}
\sigma_{W,V} J_V \sigma_{V,W} J_W = J_W \sigma_{W,V} J_V \sigma_{V,W}.\\
\label{eqn:refleqn2}
\sigma_{W,V} J'_W \sigma_{V,W} J'_V = J'_V \sigma_{W,V} J'_W \sigma_{V,W}.
\end{align}
\end{proposition}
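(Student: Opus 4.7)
The plan is to derive the numerical reflection equations \eqref{eqn:refleqn} and \eqref{eqn:refleqn2} directly from the corresponding $\A$-valued relations \eqref{eqn:refleqnA} and \eqref{eqn:refleqnA2} of Theorem \ref{Jrefleqn}, by applying the character $f$ to the third tensor factor. Concretely, I would extend $f$ to a map
$$\id\ot\id\ot f:\; \End_\CC(V)\ot\End_\CC(W)\ot F_2(\A)\longrightarrow \End_\CC(V\ot W),$$
and apply it to both sides of each relation.

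The key observation is that the two sides of \eqref{eqn:refleqnA} differ in the $\A$-slot only by the order of multiplication: after expanding both $C^R_V$ and $C^R_W$ in terms of the generators, the right-hand side accumulates the product $a^k_l(W)\,a^i_j(V)$ in $F_2(\A)$ while the left-hand side accumulates $a^i_j(V)\,a^k_l(W)$. Because $f$ is an algebra homomorphism, both products have the same image $f(a^i_j(V))\,f(a^k_l(W))\in\CC$, so that upon applying $\id\ot\id\ot f$ the sums factor. In the first two slots, the matrix units $E^j_i$ and $E^l_k$ appearing in $C^R_V$ and $C^R_W$ then collect, by definition of $J_V$ and $J_W$, into those operators acting on their respective $V$ and $W$ tensor factors, while the braidings $\sigma_{V,W}$ and $\sigma_{W,V}$ pass through $\id\ot\id\ot f$ unchanged. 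This converts \eqref{eqn:refleqnA} directly into \eqref{eqn:refleqn}.

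The second reflection equation \eqref{eqn:refleqn2} follows by the identical argument applied to \eqref{eqn:refleqnA2}, substituting the left-comodule generators $\tilde{a}^i_j(V)$ for $a^i_j(V)$ and the operators $C^L_V,C^L_W$ for $C^R_V,C^R_W$. There is no real obstacle here: the proposition is essentially a specialization by the character $f$ of Theorem \ref{Jrefleqn}, and the only nontrivial input is the multiplicativity of $f$ together with the tensor functoriality of $F_2$. The care required is mainly bookkeeping — one must track carefully which tensor slot each matrix unit acts on after the $\sigma$'s permute the $V$ and $W$ factors — but no new ideas are needed beyond what Theorem \ref{Jrefleqn} already provides.
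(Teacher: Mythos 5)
Your proposal is correct and is exactly the paper's proof: the paper's argument is simply ``Apply $f$ to the equations \eqref{eqn:refleqnA} and \eqref{eqn:refleqnA2},'' and you have spelled out the straightforward bookkeeping that makes this work.
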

\begin{proof}
Apply $f$ to the equations \eqref{eqn:refleqnA} and \eqref{eqn:refleqnA2}.
\end{proof}

We will refer to equations \eqref{eqn:refleqn} and \eqref{eqn:refleqn2} as the ``right-handed'' and ``left-handed" reflection equations, respectively.

\subsection{Coideal subalgebras associated to characters}\label{cisachar}
The operators $J_V$ and $J'_V$ constructed from $f$ in the previous section are not, in general, 
realized as morphisms of $\U$-modules.  Rather, they are morphisms of $\B_f$-modules (resp. $\B'_f$-modules), for certain coideal subalgebras $\B_f, \B'_f\subset \U$ constructed in \cite{KoSt}, which we now recall. Let $\B_f$ and $\B'_f$ denote the subalgebras of $\U$ generated by the sets:
\begin{align}{\label{eqn:cij}}
\Phi_{f}:= \{c_{il} =\sum_{j,k=1}^N l^{V+}_{ij} (J_V)_{jk}S(l^{V-}_{kl}) | i,l=1,\ldots N\},\\
\Phi'_f:= \{c'_{il} =\sum_{j,k=1}^N S(l^{V-}_{ij}) (J'_V)_{jk}l^{V+}_{kl} | i,l=1,\ldots N\},
\end{align}
respectively. Here $S$ is the antipode of the Hopf algebra $\U$ and $N=\dim V$.  
$\B_f$ and $\B_f'$ are independent on the choice of basis, and it follows from \eqref{eqn:lcoprod} that they form left and right coideal subalgebras, respectively: 
$$\Delta(\B_f)\subset \U\ot \B_f,\quad \Delta(\B'_f)\subset \B'_f\ot \U.$$  

\begin{proposition}\label{JREprop} The operator $J_V\in \End_\CC(V)$ is $\B_f$-linear: $J_V(xv)=xJ_V(v)$ for all $v\in V$ and $x\in \B_f$.  The operator $J'_V\in \End_\CC(V)$ is $\B'_f$-linear: $J'_V(xv)=xJ'_V(v)$ for all $v\in V$ and $x\in \B'_f$.
\end{proposition}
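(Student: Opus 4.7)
The plan is to reduce the statement to a matrix identity and then recognize that identity as the reflection equation already established in \eqref{eqn:refleqn}. First, since $\B_f$ is generated as an algebra by the $c_{il}$, and commuting with $J_V$ is closed under linear combinations and products, it suffices to check $[\rho_V(c_{il}),J_V]=0$ for every $i,l$. I will package these generators into a single matrix $\mathbf{C}\in\End(V)\ot\U$ with entries $(\mathbf{C})_{il}=c_{il}$; unwinding \eqref{eqn:cij}, $\mathbf{C}$ is just the matrix product $\tilde L^+_V\cdot(J_V\ot 1)\cdot S(L^-_V)$, where $\tilde L^+_V$ denotes $L^+_V$ with its two tensor factors swapped so that the $\End(V)$ slot comes first.

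Next, I will evaluate $\mathbf{C}$ on a second copy of $V$, writing $V_1$ for the auxiliary ``matrix'' slot and $V_2$ for the action slot. Using $L^+_V=(\id\ot\rho_V)(\cR)$ together with the standard quasi-triangular identity $(\id\ot S)(\cR^{-1})=(S\ot S)(\cR)=\cR$---which rewrites $S(L^-_V)=(\rho_V\ot\id)(\cR)$---this produces
$$\rho_{V_2}(\mathbf{C})=R_{21}\cdot(J_V)_1\cdot R_{12}\in\End(V_1\ot V_2),$$
where $R_{12}=(\rho_V\ot\rho_V)(\cR)$, $R_{21}=\tau R_{12}\tau$ with $\tau$ the flip, and $(J_V)_k$ denotes $J_V$ acting on the $k$-th slot. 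The desired commutation $[\rho_V(c_{il}),J_V]=0$ for all $i,l$ then reduces, after comparing entries in the $\End(V_1)$ slot, to showing that $(J_V)_2$ commutes with $R_{21}(J_V)_1 R_{12}$.

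This last identity is exactly what \eqref{eqn:refleqn} gives when specialized to $W=V$: using $\sigma_{V,V}=\tau R_{12}$ and the elementary relation $(J_V)_2\tau=\tau(J_V)_1$, the composition $\sigma_{V,V}(J_V)_2\sigma_{V,V}$ rewrites as $R_{21}(J_V)_1R_{12}$, so \eqref{eqn:refleqn} becomes precisely $[R_{21}(J_V)_1R_{12},(J_V)_2]=0$. The $\B'_f$-linearity of $J'_V$ will then follow by running the same argument in parallel: package $c'_{il}$ into $\mathbf{C}'=S(L^-_V)\cdot(J'_V\ot 1)\cdot\tilde L^+_V$, compute $\rho_{V_2}(\mathbf{C}')=R_{12}(J'_V)_1 R_{21}$ using the same L-operator identities, and apply the left-handed reflection equation \eqref{eqn:refleqn2} in place of \eqref{eqn:refleqn}. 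The main obstacle is entirely bookkeeping---keeping track of which tensor factor each $J$, $L^\pm$, or $R$-matrix acts on, and correctly translating between the categorical $\sigma$-form of the reflection equation and its $R$-matrix form adapted to L-operator products---but once $(S\ot S)(\cR)=\cR$ and $\tau R_{12}\tau=R_{21}$ are brought to bear, the algebra collapses.
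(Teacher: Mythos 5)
Your proposal is correct and follows essentially the same approach as the paper: both reduce the claim to commutation with the generators $c_{il}$, package these into the operator $\sum_{i,l}E_i^l\ot\rho_V(c_{il})$ on $V_1\ot V_2$, identify that operator as $\sigma_{V_2,V_1}J_{V_1}\sigma_{V_1,V_2}$ (your $R_{21}(J_V)_1R_{12}$, which is the same thing via $\sigma=\tau R$), and then invoke the reflection equation. Your version simply spells out the $L$-operator identities $L^+_V=(\id\ot\rho_V)(\cR)$, $S(L^-_V)=(\rho_V\ot\id)(\cR)$ that the paper compresses into the phrase ``we observe that.''
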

\begin{proof}
Similar proofs have appeared in many sources, e.g. \cite{KoSt}, \cite{DS}, \cite{NS}; we include a proof here for the reader's convenience.  We prove the statement for $J_V$; the statement for $J'_V$ is similar.  To show that $J_V$ commutes with all the $\rho_V(c_{il})$ is equivalent to showing that $(\id\ot J_{V_2})$ commutes with $x = \sum E_i^l\ot\rho_V(c_{il}) \in \End_\CC(V_1\ot V_2)$, where $V_1=V_2=V$.  We observe that 
\begin{equation*}x=\sum E_i^l\ot\rho_{V}(l^{V+}_{ij}(J_V)_{jk}S(l^{V-}_{kl}))=\sigma_{V_2,V_1}J_{V_1}\sigma_{V_1,V_2},\end{equation*}
so that the claim reduces to the right handed reflection equation.
\end{proof}
\begin{remark} The proof of Proposition \ref{JREprop} relies on the observation that the matrix coefficients of $\sigma_{V_2,V_1}J_{V_1}\sigma_{V_1,V_2}$ are precisely the generators of $\B_f$.  The same observation provides the key steps in Lemmas \ref{lem:invident} and \ref{lem:K0P1T0}.
\end{remark}

\subsection{$J_V$-decorated Tangle Diagrams in $\cC\bt\cC$}\label{JSec}
Morphisms in a braided tensor category may be conveniently manipulated using tangle diagram notation (see, e.g. \cite{K}, Chapter XIV).  It will be necessary to extend the tangle diagram notation in two ways: first, we consider morphisms in the Deligne tensor product $\cC\bt\cC$;  second, we admit morphisms $J_V$ and $J_V'$ which are not morphisms in $\cC$ but rather in the $\cC$-module categories of representations of the coideal subalgebras $\B_f$ and $\B'_f$ from Section \ref{cisachar}.
\begin{figure}[h]
\begin{center}
\includegraphics[height=0.9in]{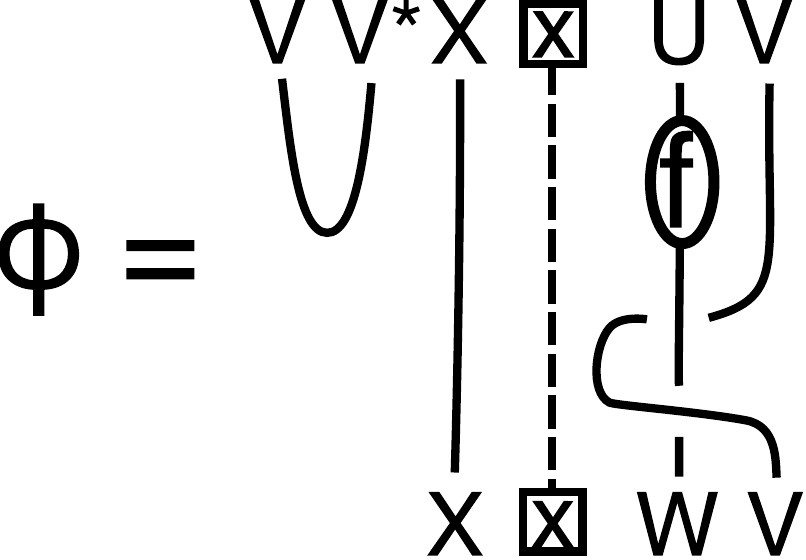}
\end{center}
\caption{The tangle diagram for the morphism $\phi$ of equation \eqref{examplemorphism}.}\label{examplefig}
\end{figure}

To depict an object of $\cC\bt\cC$, we draw the objects alongside one another, separated by the $\bt$ symbol.  For a morphism $f\bt g$ in $\cC\bt\cC$, we draw the corresponding tangle diagrams alongside one another, joining the $\bt$ symbols with a dotted line.  We follow the convention from \cite{K} that morphisms move up the page.  For example, for $f\in\Hom(W,U)$, Figure \ref{examplefig} depicts the morphism:
\begin{equation}\label{examplemorphism} \phi=(\coev_V\ot\id_X)\bt ((f\ot\id_V)\circ\sigma_{W,V}^{-1}\circ\sigma_{V,W}^{-1}).
\end{equation}

The linear maps $J_V$ and $J_V'$ do not commute with the braiding in the ordinary way, but may instead be manipulated in a tangle diagram by applying equations \eqref{eqn:refleqn} and \eqref{eqn:refleqn2}, as depicted in Figure \ref{JReflEqnfig}.  
\begin{figure}[h]
\begin{center}
\includegraphics[height=1.3in]{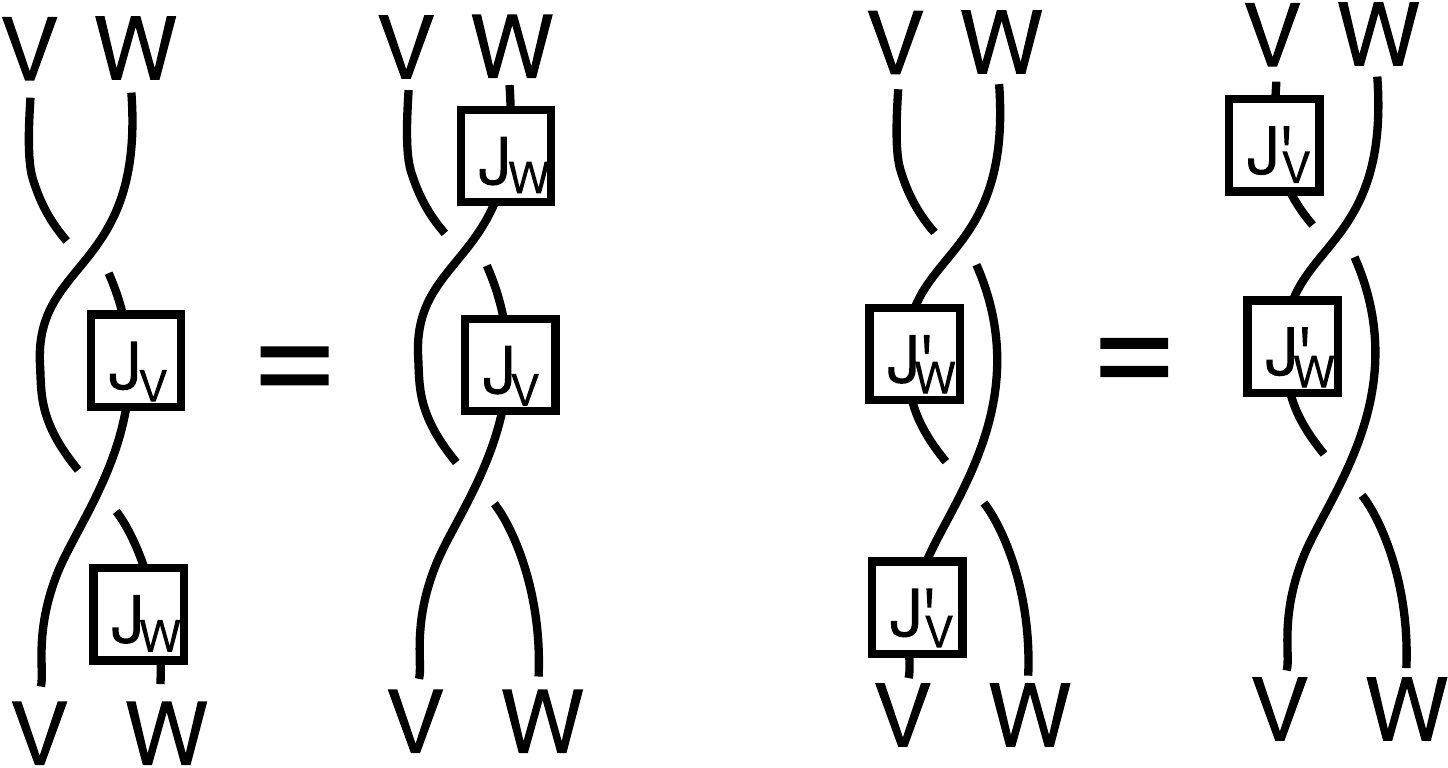}
\end{center}
\caption{Equality of $J$-decorated tangle diagrams representing equations \eqref{eqn:refleqn} and \eqref{eqn:refleqn2}, respectively.}\label{JReflEqnfig}
\end{figure}

\section{Some new representations of the affine braid group of type $C^\vee C_n$}\label{abg}

Let $\cC$, $F$, and $f$ be as in Section 4.  For any objects $M, V_1,\ldots, V_n \in \mathcal{C}$, consider the vector space\footnote{While $f$ does not affect the underlying vector space, it impacts the functor constructed in Theorem \ref{easyprop}, and so we introduce the notation here.}:
\begin{align*}
F^f_{V_1,\ldots,V_n}(M)\,:=\,M\ot V_{1}\ot \cdots \ot V_{n}.
\end{align*}

For simplicity we will take $V_1=\cdots=V_n=V$  (though it is still convenient to retain the indices), and in this case abbreviate $F^f_{V,n}:=F^f_{V_1,\ldots,V_n}$.  Our goal in this section is to construct an action of $\hB_{n}$ on $F^f_{n,V}(M)$.  Recall that the character $f$ determines a map $J_{V_i}:V_i\to V_i$, for each $i$.

In the following construction, we make frequent use of the maps $J_V$.  As was mentioned in Section \ref{JSec}, the only flexibility in moving the morphisms $J_V$ about a tangle comes from the reflection equation for $J_V$, and so we make repeated use of that identity throughout.  We will use the abbreviation QYBE (quantum Yang-Baxter equation) to refer to relations of undecorated tangle diagrams. 

\subsection{The action of $\cB_{n}$}

Let $T_{i}=\sigma_{V_{i}, V_{i+1}}$, for $i=1,\ldots,n-1$.
Then it is well known that the $T_{i}$'s satisfy 
the braid relations \eqref{eqn:braid}.   Now let 
$T_n=J_{V_{n}}=\id_{M}\ot \id^{\ot (n-1)}\ot J_{V}$.  
Then the required relation $$T_{n}T_{n-1}T_{n}T_{n-1}=T_{n-1}T_{n}T_{n-1}T_{n}$$
is equivalent to the right-handed reflection equation for $J_{V_{n}}$. 
Thus the above construction gives an action of 
$\cB_{n}$ on $F^f_{n,V}(M)$.  
Related constructions have appeared in \cite{KoSt,tD, tDHO}, under the name ``universal cylinder forms".

\subsection{The action of $T_0$}{\label{sec:affinebraidaction}}

We let
\begin{equation*}
T_{0} = P_1^{-1}(\sigma_{V_1,M}\circ \sigma_{M,V_1})^{-1}
\end{equation*}
See Figure \ref{T0defn} for the tangle diagram associated to $T_0$.  It is straightforward to verify that $T_iT_0=T_0T_i$ for $i\geq 2$.  We check $T_1T_0T_1T_0=T_0T_1T_0T_1$ in Figure \ref{fig:T0check}.
\begin{figure}[h]
\begin{center}
\includegraphics[height=1.3in]{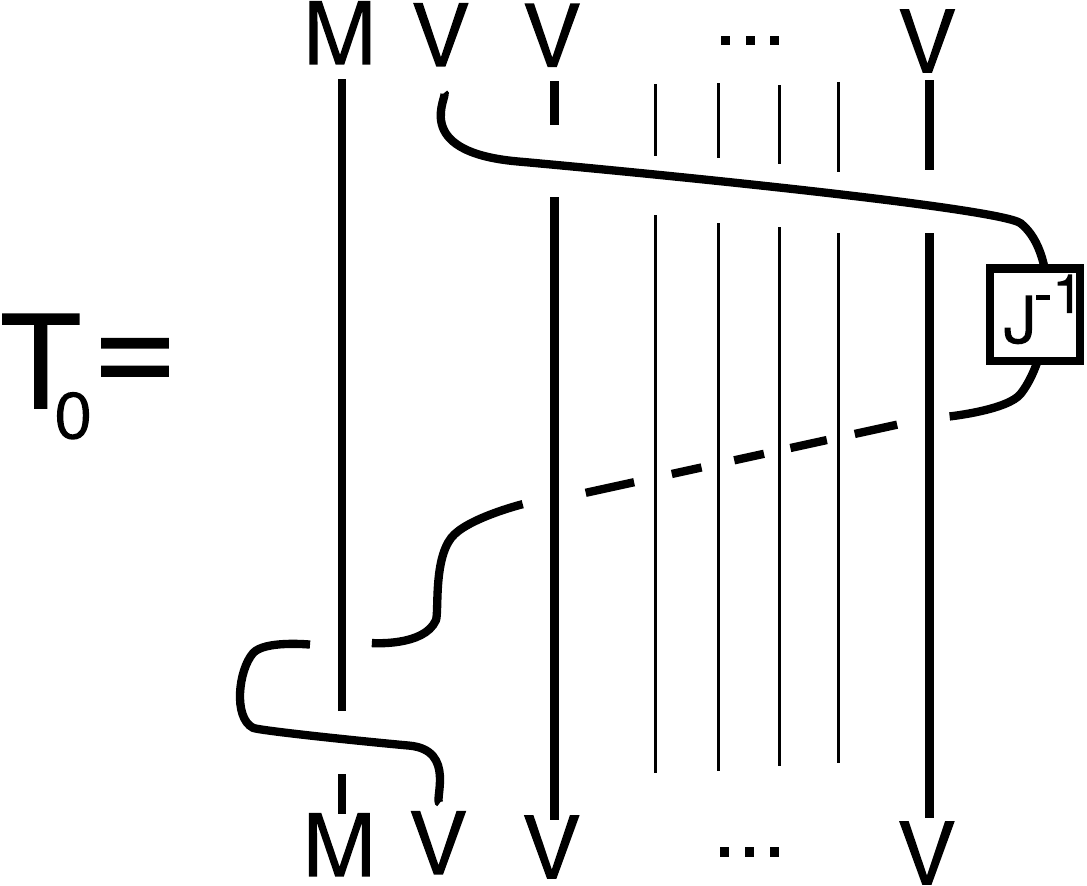}
\end{center}
\caption{The morhpism $T_0$}\label{T0defn}
\end{figure}

\begin{figure}[h]
\begin{center}
\includegraphics[width=4.5in]{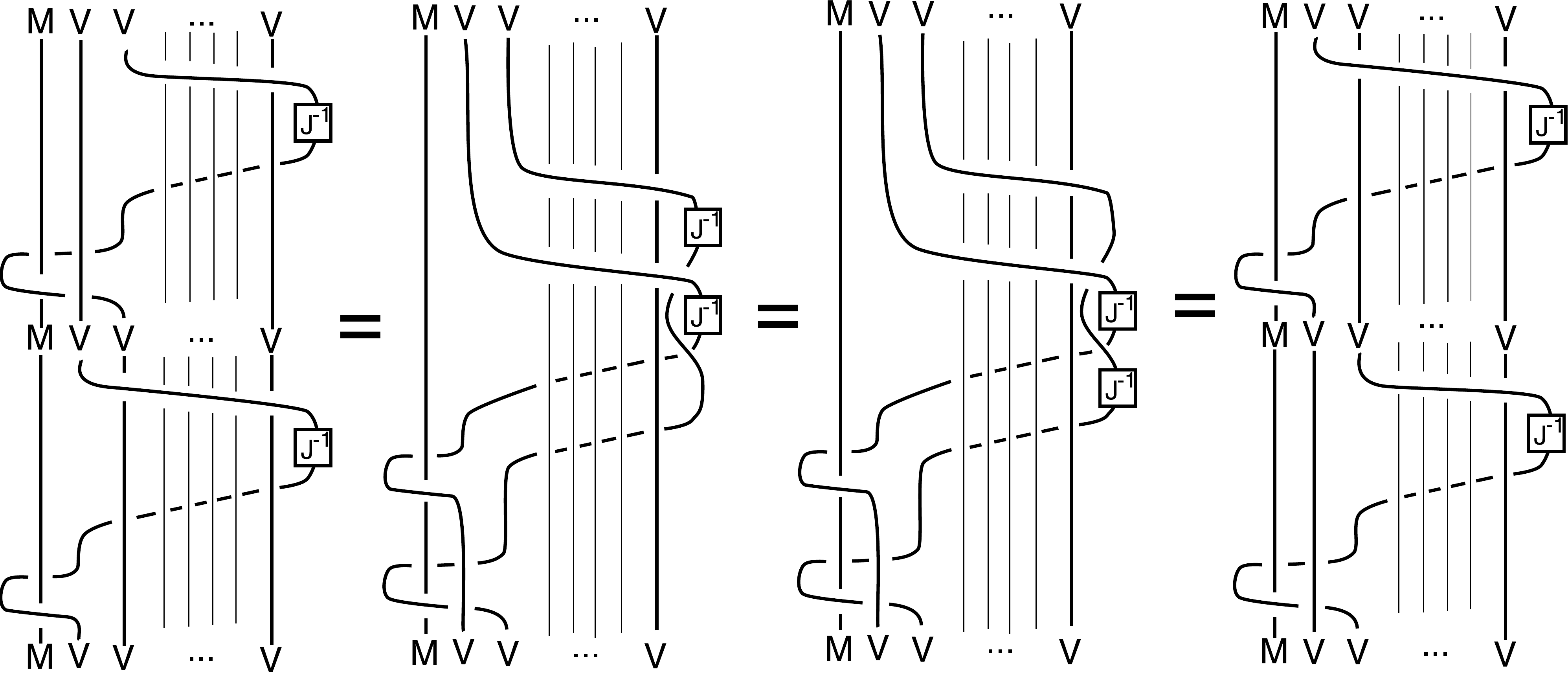}
\end{center}
\caption{Proof of relation $T_1T_0T_1T_0=T_0T_1T_0T_1$.  The first and third equalities use only QYBE, while the second uses the reflection equation for $J$.}\label{fig:T0check}
\end{figure}
 We have proven the following: 
\begin{theorem}\label{easyprop} 
The operators $T_0,\ldots T_n$ define a representation 
of $\hB_{n}$ on $F^f_{n,V}(M)$.  Thus we have a functor:
\begin{align*}F^f_{n,V}: \cC \to \hB_{n}\textrm{-mod},\quad
M\mapsto F^f_{n,V}(M).
\end{align*}
\end{theorem}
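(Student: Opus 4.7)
The plan is to verify each of the defining relations of $\hB_n$ (as listed in Definition \ref{defn:affdefn}) on $F^f_{n,V}(M)$, organizing the verifications according to which operators are involved. Since the operators act locally on consecutive tensor factors (or on $M \otimes V_1$ in the case of $T_0$), most relations will reduce to identities of morphisms in $\cC$ on two or three tensor factors, plus one use of the reflection equation on a pair of factors.

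First, I would dispense with the purely "finite" braid relations. The assignment $T_i = \sigma_{V_i,V_{i+1}}$ for $1 \leq i \leq n-1$ yields the braid relations \eqref{eqn:braid} among $T_1,\ldots, T_{n-1}$ as an immediate consequence of naturality and the hexagon axiom for $\sigma$ (i.e. QYBE for the universal $R$-matrix). For the mixed relation $T_{n-1}T_n T_{n-1}T_n = T_n T_{n-1} T_n T_{n-1}$, I would rewrite both sides as endomorphisms of the two factors $V_{n-1} \otimes V_n$. Since $T_n = \id \otimes J_V$ and $T_{n-1}=\sigma_{V_{n-1},V_n}$, this identity is precisely the right-handed reflection equation \eqref{eqn:refleqn} for $J_{V_{n-1}}$ and $J_{V_n}$ with $V_{n-1}=V_n = V$, as provided by Proposition \ref{JREprop}.

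Next, I would handle the relations involving $T_0$. Since $T_0$ acts only on $M\otimes V_1$ factors (via $P_1^{-1}$ and the double braiding with $M$), it manifestly commutes with $T_i$ for $i \geq 2$; this can be read off the tangle diagram by sliding the strands for $V_i, V_{i+1}$ past the $T_0$ component. This leaves the one genuinely nontrivial relation $T_0 T_1 T_0 T_1 = T_1 T_0 T_1 T_0$. I would present this as a sequence of tangle diagram manipulations: unfold the definition of $T_0 = P_1^{-1}(\sigma_{V_1,M}\circ \sigma_{M,V_1})^{-1}$, use QYBE to slide the $\sigma_{M,V_i}$-type crossings into a configuration where the $J_V$-decoration appearing in $P_1 = \sigma_{V_1,V_2}\cdots \sigma_{V_{n-1},V_n}J_{V_n}\sigma_{V_{n-1},V_n}\cdots\sigma_{V_1,V_2}$ ends up adjacent to another $J_V$-decoration, apply the reflection equation \eqref{eqn:refleqn} once to swap the order of the two $J$'s, and then use QYBE again to restore the configuration matching the right-hand side. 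This is exactly the three-step argument suggested by Figure \ref{fig:T0check}.

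The main obstacle in the argument is the verification of $T_0 T_1 T_0 T_1 = T_1 T_0 T_1 T_0$: here one must deal simultaneously with the braiding between $M$ and the $V_i$ factors, the definition of $P_1$ (which itself already contains a $J_V$), and the additional $J_V$ coming from $T_0 = P_1^{-1}\cdots$, so one has to carefully track which $J$'s cross which braidings. The key realization that makes the proof go through is that the coefficients of $\sigma_{V_2,V_1}J_{V_1}\sigma_{V_1,V_2}$ are precisely the coideal subalgebra generators (cf. the remark after Proposition \ref{JREprop}), so that moving $J$'s past braidings is governed exactly by the reflection equation and nothing more. Once that observation is made, the rest of the relations reduce to diagrammatic bookkeeping, and packaging the assignment $M \mapsto F^f_{n,V}(M)$ as a functor $\cC \to \hB_n\textrm{-mod}$ is automatic from the naturality of all the operators involved in their $\cC$-module arguments.
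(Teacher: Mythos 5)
Your proposal is correct and takes essentially the same route as the paper: the finite braid relations from naturality of the braiding, the $T_{n-1},T_n$ relation reduced to the right-handed reflection equation, the trivial commutation of $T_0$ with $T_i$ for $i\geq 2$, and the verification of $T_0T_1T_0T_1 = T_1T_0T_1T_0$ by exactly the three-step tangle-diagram argument (QYBE, reflection equation for $J$, QYBE) of Figure \ref{fig:T0check}. Two minor attribution quibbles that do not affect correctness: the reflection equation \eqref{eqn:refleqn} is the content of the unlabeled proposition preceding Proposition \ref{JREprop} (the latter asserts $\B_f$-linearity of $J_V$), and the observation that the coefficients of $\sigma_{V_2,V_1}J_{V_1}\sigma_{V_1,V_2}$ are the coideal generators is, per the remark following Proposition \ref{JREprop}, the key input to Lemmas \ref{lem:invident} and \ref{lem:K0P1T0} in the double affine construction rather than to the present theorem, whose proof needs only the bare reflection equation for $J$.
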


\begin{remark}  The \emph{pure} (double, affine) braid group on $n$ strands is the kernel of the natural projection from the (double, affine) braid group to the symmetric group $\cS_n$.  It is clear from the proof that Theorem \ref{easyprop} extends more generally to the pure affine braid group on $n$ strands, if we drop the assumption that all $V_i$ are equal.  Alternatively, given $V_1,\ldots, V_n$ possibly distinct, we can construct a similar action of the full affine braid group on the sum 
$$\widetilde{F}^f_{V_1,\ldots V_n}:=\bigoplus_{\sigma\in \cS_n} M\ot V_{\sigma(1)}\ot\ldots\ot V_{\sigma(n)}.$$  The same remark applies to Theorem \ref{maintheorem}.
\end{remark}

\section{Some new representations of the double affine braid group of type $C^\vee C_n$}\label{dabg}

\subsection{Quantum $D$-modules}{\label{sec:qdmodule}}
Let $\U$ be a quasi-triangular Hopf algebra, and $\cC$ be a locally finite braided tensor subcategory of $\U$-mod, as in Section \ref{sec:qtHA}.  The algebra $D_\U$ of quantum differential operators\footnote{$\A$ and thus $D_\U$ depend on the choice of $\cC$, but we will suppress this in the notation} is a Hopf algebra analog of the algebra of differential operators on the algebraic group $G$ with Lie algebra $\g$: when $\U=\cU(\g)$, we have $D_\U=D(G)$.  In this section, we recall the definition of  $D_\U$, and some constructions  from \cite{VV} involving it. We have followed their notation as closely as possible, though there are a few differences (in particular, see Remark \ref{differences}).   

Let $\A$ be the braided dual algebra defined in Section \ref{CoEndSec}, and let $\widetilde{\A}=(F\bt F)(\A)$.  That is, we regard $\A$ as an algebra in vector spaces, where it is equipped with a $(\U\ot \U)$-action.  Note that $\widetilde\A$ is not isomorphic to $F_2(\A)$ considered previously (although it is obviously twist equivalent).  $\widetilde{\A}$ is also a twist-equivalent to a subalgebra of $\U^*$.  Namely, each $f\bt v \in\widetilde{\A}$, gives a linear functional $a_{f,v}: \cU\to\CC$ by $a_{f,v}(u)=f(uv)$.  The product in $\widetilde{\A}$ is such that
\begin{equation}\label{multrule}(a_{f,v}a_{g,w})(u) = \sum a_{r'_ig\ot r_if,v\ot w}(u) = \sum_ia_{r_if,v}(u_{(1)})a_{r'_ig,w}(u_{(2)}).\end{equation}

We use ``$\rhd$'' to denote the left adjoint action of $\U$ on itself: 
for $x,y\in \U$, $y\rhd x:=y_{(1)}xS(y_{(2)})$, where $\Delta(x)= x_{(1)}\ot x_{(2)}$ is Sweedler's implicit sum notation for the coproduct.  As there is no risk of confusion, we use the same symbol to denote the action of $\U\ot \U$ on $\widetilde\A$: for $x,y\in \U$, and $f\ot v\in \widetilde\A$, we let $(x\ot y)\rhd (f\ot v):= xf\ot yv$. In particular, the coadjoint action of $u\in\U$ on $a\in\widetilde\A$ is given by $\Delta(u)\rhd a$.  Recall that for vector spaces $V$,$W$, $\tau_{V,W}:V\ot W\to W\ot V$ denotes the tensor flip.

Let $\U'$ denote the left coideal subalgebra in $\U$ consisting of elements $x$ which generate 
a finite dimensional submodule under the adjoint action.

\begin{definition}\label{DUdefn} 
The algebra $D_\U$ of quantum differential operators has underlying vector space $\widetilde\A\ot \U$;  
the natural inclusions of $\widetilde\A\ot 1$ and $1\ot \U$ are algebra homomorphisms, 
and the commutation relations are given by the smash product:
\begin{align*}{\label{eqn:qdiffrel}}
(1\ot x)(a\ot 1)=\sum_{i,j}((x_{(1)}\ot 1)\rhd a)\ot x_{(2)}, 
\text{ for }a\in \widetilde\A, x\in \U.
\end{align*}
\end{definition}
We denote by $\partial_{\lhd}:\U\to D_\U$ the inclusion into the subalgebra $(1\ot \U)$.  The algebra $\U'$ is a locally finite left $\U$-module, and thus a right $\widetilde\A$ comodule algebra, via the adjoint action; we have a linear map:  $\ad^*:\U'\to \U'\ot \A$,
$u \mapsto \sum_i u_i\ot g_i.$  By definition, we have $\sum_i g_i(v) u_i = v_{(1)}uS(v_{(2)})$, for all $v\in\U$.

\begin{remark} 
For quantum groups defined over formal power series, it is not necessary to distinguish between $\U$ and its locally finite part, as the space of vectors of finite type under the $\ad\U$-action is dense in the $\hbar$-adic topology, so for instance the co-adjoint map $\ad^*:\U\to\U\ot\widetilde{\A}$ is automatically well-defined as a formal power  series -- this is all we need.
\end{remark}

\begin{proposition}\label{proplist}[\cite{VV}, Proposition 1.8.2(c), Remark 1.8.4] We have:
\begin{enumerate}
\item If $\U$ has enough finite-dimensional modules (see, e.g. \cite{J}, Definition 2.12, Theorem 2.18), then the algebra $\A$ is a faithful representation for $D_\U$.  (We will make this assumption from now on).
\item The map $\partial_\rhd: \U'\to D_{\U},$ given by $$\partial_\rhd(u) := \sum_{i,j}((r_j\ot 1)\rhd g_i)\partial_{\lhd}(S^{-1}(u_ir'_j)),$$ is a homomorphism of algebras.
\item The algebra $\widetilde{\A}$ is equivariant for the resulting $\U\ot \U'$ action.
\item The images $\partial_\lhd(\U)$ and $\partial_\rhd(\U')$ commute in $D_\U$, so we have a homomorphism $\partial_2=\partial_\lhd\ot\partial_\rhd: \U\ot\U'\to D_\U$.
\item $\partial_2$ is a quantum moment map: on generators $V^*\bt V$ of $\A$, the $\U\ot\U'$-action is given by:
 $$\partial_2(x\ot y)(f\bt v) = x f\bt y v.$$
\end{enumerate}
\end{proposition}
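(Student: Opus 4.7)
Since this proposition is cited as a recollection of Varagnolo--Vasserot's Proposition 1.8.2(c) and Remark 1.8.4, my plan would be to follow their strategy: each claim is a consequence of the smash-product relation defining $D_\U$ (Definition \ref{DUdefn}) combined with the defining cocycle identity for $\ad^*$, namely $\sum_i g_i(v)\,u_i = v_{(1)}\,u\,S(v_{(2)})$. I would prove the statements in the order $(5)\Rightarrow(1)\Rightarrow(2)\Rightarrow(4)\Rightarrow(3)$, with $(2)$ being the technical heart and $(5)$ being essentially tautological from the construction.

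For (5), I would verify directly on generators $f\bt v \in V^*\bt V$ that $\partial_\lhd(x)(f\bt v)=xf\bt v$ and $\partial_\rhd(y)(f\bt v)=f\bt yv$. The first is immediate from Definition \ref{DUdefn}, and the second amounts to unpacking the formula for $\partial_\rhd(y)$ using the rule \eqref{multrule} for the product in $\widetilde{\A}$ and the matrix-coefficient realization $a_{f,v}(u)=f(uv)$. This is a compact R-matrix calculation once the indices are tracked; the point is that the factors $r_j,r'_j$ introduced in the defining formula for $\partial_\rhd$ are precisely what is needed to untwist the cocycle in \eqref{multrule}. With (5) in hand, (1) reduces to separating elements of $\widetilde{\A}\otimes \U$ by their joint action on $\A$: given $D=\sum_a a_a\otimes x_a\in D_\U$ with the $x_a$ linearly independent in $\U$ and $D\cdot\A=0$, applying $D$ to generators $V^*\bt V$ for varying $V\in\cC$ uses the ``enough modules'' hypothesis to conclude $x_a=0$ or $a_a=0$ as needed, and faithfulness of left multiplication by $\widetilde{\A}$ on itself finishes the job.

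The main obstacle is (2): showing $\partial_\rhd(uv)=\partial_\rhd(u)\partial_\rhd(v)$. The plan is to expand both sides using the defining formula, then (a) use the smash-product relation of Definition \ref{DUdefn} to migrate each $\partial_\lhd$-factor on the right-hand side past the intervening $\widetilde{\A}$-factor, picking up a twist by $r_j\ot 1$; (b) combine pairs of universal R-matrix factors via $(\Delta\otimes\id)(\cR)=\cR_{13}\cR_{23}$ and $(\id\otimes\Delta)(\cR)=\cR_{13}\cR_{12}$; and (c) apply the cocycle identity for $\ad^*$ to recognize the resulting expression as $\ad^*(uv)$ fed into the defining formula. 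The bookkeeping is genuinely delicate because the opposite-coproduct convention implicit in the position of $r_j$ versus $r'_j$ must be tracked consistently against the ordinary coproducts appearing in $\partial_\lhd(S^{-1}(\cdot))$; this is why I would phrase the computation on the level of matrix coefficients in an arbitrary finite-dimensional $V\in\cC$, where both sides become explicit operators on $V$ and the identity reduces to a consequence of the braid relations in $\End_\CC(V^{\otimes 3})$.

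With (2) established, (4) follows by showing that, on the generators $V^*\bt V$ of $\A$ and using (5), $\partial_\lhd(x)$ and $\partial_\rhd(y)$ act on different tensor factors and hence commute; together with the faithfulness from (1), this upgrades to a commutation identity in $D_\U$ itself, yielding the map $\partial_2$. Finally (3) is automatic: equivariance under the $\U$-action is built into Definition \ref{DUdefn}, while equivariance under $\U'$ via $\partial_\rhd$ is the action described in (5), whose compatibility with the algebra structure on $\widetilde{\A}$ is the content of $\partial_\rhd$ being a homomorphism. Throughout, the only conceptual input beyond universal R-matrix manipulations is the restriction to $\U'$, which ensures that $\ad^*(u)$ is a finite sum and that the expression defining $\partial_\rhd(u)$ actually lives in $D_\U$.
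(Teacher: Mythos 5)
Your overall plan is sound and identifies the right ingredients (the smash relation, the cocycle identity $\sum_i g_i(v)u_i=v_{(1)}uS(v_{(2)})$, faithfulness of $\widetilde{\A}$), but the proof of the crucial part (2) takes a noticeably harder route than the paper. The paper does not directly expand $\partial_\rhd(uv)=\partial_\rhd(u)\partial_\rhd(v)$; instead it proves the single identity
$$(\partial_\rhd(u)\cdot f)(v)=f(vu)\qquad\text{for all }f\in\widetilde{\A},\ v\in\U,$$
i.e.\ $\partial_\rhd(u)$ acts on $\widetilde{\A}$ as right translation. This is exactly the computation you sketch in (c) — one application of the $\widetilde{\A}$-multiplication rule \eqref{multrule}, a cancellation of $\cR$ against $\cR^{-1}$ coming from $S(r_j)r_k$, and then the cocycle identity — but the payoff is that multiplicativity of $\partial_\rhd$ is then \emph{free} from associativity of multiplication in $\U$, plus faithfulness of $\widetilde{\A}$ from (1). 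Your proposed plan of ``expand both sides, migrate $\partial_\lhd$'s past $\widetilde{\A}$-factors, recombine $\cR$'s, reduce to braid relations in $\End_\CC(V^{\otimes 3})$'' is doing unnecessary work, and the phrase about $\End_\CC(V^{\otimes 3})$ is a bit misleading: the identity to be checked lives in $\End_\CC(\widetilde{\A})$, not in $\End_\CC(V^{\otimes 3})$, and the reduction to a module of finite type is handled once and for all by faithfulness rather than by varying over $V$.

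The other differences are minor and legitimate variants. You propose to prove (5) directly on generators before anything else; the paper instead deduces (5) as an immediate consequence of the right-translation formula proved in (2), which is cleaner but not essentially different. For (1) the paper simply cites \cite{J}, whereas you sketch an argument from scratch — reasonable, though as written it is too vague (applying $D=\sum_a a_a\otimes x_a$ to $\widetilde{\A}$ does not trivially decouple the two tensor factors, because $\partial_\lhd(x_a)$ acts by the coadjoint action, not by annihilating $\widetilde{\A}$; the actual argument in \cite{J} is more careful). For (4), the paper computes directly that $\partial_\rhd(u_1)\partial_\lhd(u_2)f$ and $\partial_\lhd(u_2)\partial_\rhd(u_1)f$ both equal $f(S(u_2)\bullet\,u_1)$, which again falls out of the right-translation formula; your formulation ``they act on different tensor factors on generators $V^*\bt V$'' is the content of (5) and gives the same conclusion but would still need to be upgraded via faithfulness to an identity in $D_\U$, as you note. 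For (3) both arguments coincide. In short: correct in outline, but your step (2) should be streamlined by isolating the right-translation identity as the real lemma; the multiplicativity and (5) then come essentially for nothing.
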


\begin{proof} The proof of \ref{proplist}.1 given in \cite{J} applies as well here.  We include proofs of \ref{proplist}.2-5 for the reader's convenience, as our conventions differ slightly from \cite{VV}.

For \ref{proplist}.2, we claim that $\partial_{\rhd}(u) f = (1\ot u)\rhd f = f(\bullet u)$, for all $f\in \widetilde{\A}$.  Here $f(\bullet u)(v):=f(vu)$.  This will imply that $\rho_{\widetilde{\A}}\circ\partial_\rhd$, and thus $\partial_\rhd$ by claim \ref{proplist}.1, is a homomorphism.  Thus we let $v\in \U$ and compute:
\begin{align*}(\partial_\rhd(u) f)(v) &= (\sum_{i,j} (r_j\ot 1)\rhd g_i)\partial_\lhd(S^{-1}(u_ir'_j)) f)(v)\\
&= \sum_{i,j} (g_i(S(r_j) -) f(u_ir_j'-))(v)\\
&=\sum_{i,j,k} g_i(S(r_j)r_kv_{(1)})f(u_ir_j'r_k'v_{(2)}), \textrm{ by \eqref{multrule}}\\
&=\sum_k g_i(v_{(1)})f(u_iv_{(2)})\\
&= f(v_{(1)}uS(v_{(2)})v_{(3)})\\
&= f(vu),
\end{align*}
as claimed.  The proof of \ref{proplist}.3 is clear, because $\widetilde{\A}$ was constructed as an algebra in $\cC\bt\cC$.  To prove \ref{proplist}.4, it suffices to consider the $D_\U$-module $\widetilde{\A}$, by claim \ref{proplist}.1.  We have
$$(\partial_\rhd(u_1)\partial_{\lhd}(u_2)f)(v) = f(S(u_2)vu_1) = (\partial_{\lhd}(u_2)\partial_\rhd(u_1)f)(v).$$  Claim \ref{proplist}.5 follows from the proof of claim \ref{proplist}.2.
\end{proof}

\begin{remark}\label{faithfulA}
We will make repeated use of the faithfulness of $\widetilde{\A}$ in coming sections, as in the proofs of \ref{proplist}.2 and \ref{proplist}.4 above.  In particular, the proofs of Proposition \ref{Kreln} and Theorem \ref{maintheoremDAHA} require us to check certain relations amongst elements in $\End_\CC(M\ot U)$, where $M$ is a $D_\U$-module, and $U$ is a finite dimensional vector space.  Each relation is of the form $(\rho_M\ot\id)(X)$ for some $X\in D_\U\ot \End_\CC(U)$, and thus holds for all $D_\U$ modules if, and only if, $X$ is already zero.  Since $\A$ is faithful, we can verify $X=0$ by evaluating at $M=\A$.
\end{remark}

\begin{remark}\label{differences} The algebra $\widetilde{\A}$ is constructed to be equivariant for a $\U\ot\U$-action, while the algebra $F_2(\A)$ is equivariant for the diagonal $\U$-action.  In \cite{VV}, there is yet another relative of $\A$, denoted $\mathbf{F}$, which is equivariant for a $\U^{co-op}\ot \U$-action.  These algebras are not each isomorphic.  However, the smash-product algebras $D_\U$ defined from them are isomorphic.  See \cite{VV}, Proposition 1.4.2 for details.
\end{remark}

\subsection{Non-degenerate quantum $D$-modules}\label{nondegDmods}
Classically, a $D(G)$ module is a module over the algebra $\cU(\g)\ot \cU(\g)$ 
via the inclusions of $\cU(\g)$ into $D(G)$ by left- and right-invariant differential operators.  
The quantum analog of these actions are given by the homomorphism $\partial_2:\U'\ot \U'\to D_\U$.  
For $\U=\cU(\g)$, we have $\U'=\U$, and this recovers the commuting actions entirely; 
for more general quasi-triangular Hopf algebras $\U$ (including those coming from quantum groups), it can happen that \mbox{$\U'\neq \U$}.

We thus introduce the following definitions.  We denote by $\widetilde{\cC}$ and $\widetilde{\cD}$ the categories of $\U$-modules and $\U'$-modules, respectively.  We denote by $\Res :\widetilde\cC\to\widetilde\cD$ the functor of restriction.

\begin{definition} A non-degenerate $D_\U$-module $M$ is an object of $\widetilde{\cC}\bt\widetilde{\cC}$, together with the structure of a $D_\U$-module on $(\id\bt\Res)(M)$, such that the two actions of $\U\ot\U'$ coincide.
\end{definition}

\begin{remark} In other words, we ask for an extension of the action of $\partial_\rhd(\U')$ to an action of $\U$.  For a general quasi-triangular Hopf algebra, it is not completely clear how many $D_\U$ modules admit non-degenerate structure.  However, see Section \ref{nondegqDmods}.
\end{remark}

\subsection{Construction of the representations}
Let $M$ be a non-degenerate $D_\U$-module.  Let $f,g$ be two characters of $\A$, 
and let $J:=J_V$ be the numerical solution to the right-handed reflection equation for $f$, 
and $J':=J'_V$ be the numerical solution to the left-handed reflection equation for $g$.  
Let $\chi:\B_f\to\CC$ be a character, and let $\trivial_\chi$ denote the associated one dimensional representation.  
We regard any non-degenerate $D_\U$-module $M$ as a $\U \ot \U$-module via the homomorphism $\partial_2$ of Section \ref{sec:qdmodule}, which we extend to $\U\ot \U$.
We then define (reusing the previous notation):
$$F^{f,\chi,g}_{n,V}:= \Hom_{\B_f}(\trivial_\chi,M\ot_2(\trivial\bt V_1) \ot_2\cdots\ot_2 (\trivial\bt V_n)).$$
In other words, we regard each $V_i$ as an object in $\cC\bt\cD$, i.e. a $\U\ot \U'$-module 
with trivial action in the first components.    
Here $\B_f$ acts on the tensor product via the restiction of the homomorphism $
\partial_\rhd:\B_f\to D_\U$.
We let $\hB_n$ act as before, acting always on the second tensor component 
(which means it acts by \emph{left translation}, which are \emph{right-invariant} quantum vector fields on $M$).


We define the following operator 
\begin{equation}K_{0}:= \mu_M\circ\sigma_{1\bt V,M}\circ ((J'\ot 1)\circ\coev_V\bt(\id\ot\coev_{^*V}))\circ\sigma^{-1}_{1\bt V_1,M},\label{K0def}\end{equation}
depicted in the following figure:
\begin{center}\label{K0defn:fig}
\includegraphics[height=2in]{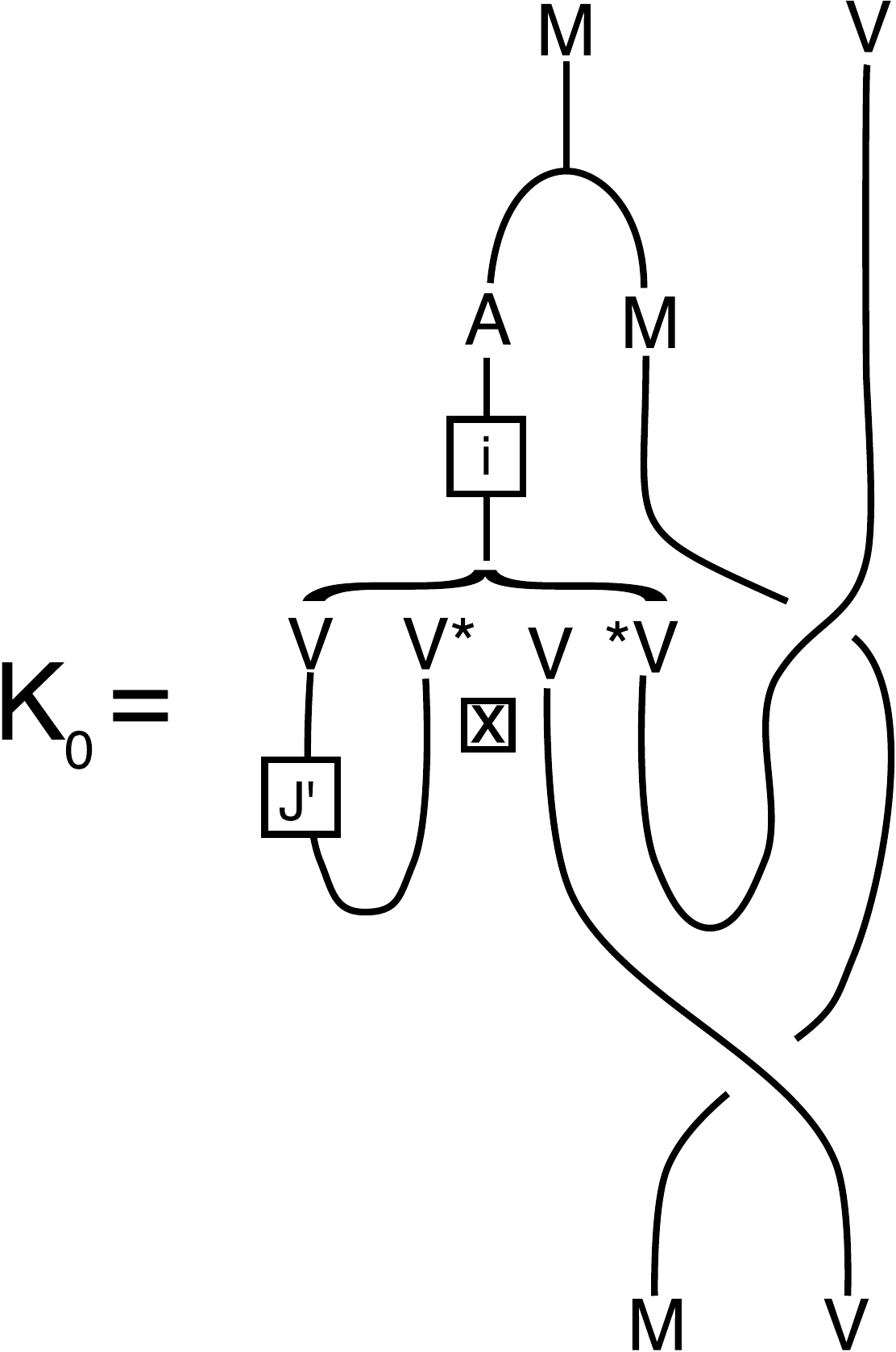}
\end{center}

$K_0$ is thus constructed from $\B_f$-linear ($\U$-linear, even) 
morphisms on the second $\bt$-component, and so it automatically preserves spaces of $\B_f$-invariants.

\begin{proposition}\label{Kreln} 
We have following identity:
$$T_1K_{0}T_1K_{0}=K_{0}T_1K_{0}T_1, 
\text{ and }K_{0}T_{i}=T_{i}K_{0} \text{ for } i \geq 2.$$
\end{proposition}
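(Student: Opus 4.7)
My plan is to handle the easy relations first and then focus on the boundary braid relation, which is the main content.

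First I would dispatch $K_0 T_i = T_i K_0$ for $i\geq 2$ from the tangle diagram: $T_i=\sigma_{V_i,V_{i+1}}$ acts as the identity on $M$ and on $V_1$, while the definition \eqref{K0def} shows $K_0$ acts as the identity on $V_2\otimes\cdots\otimes V_n$. These operators live on disjoint tensor factors, so their tangle diagrams are isotopic and the relation holds on the nose.

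For the braid relation $T_1K_0T_1K_0=K_0T_1K_0T_1$, I would argue diagrammatically, in close analogy with the verification $T_1T_0T_1T_0=T_0T_1T_0T_1$ displayed in Figure \ref{fig:T0check}. Starting from the $J'$-decorated tangle for $T_1K_0T_1K_0$ in $\cC\bt\cC$, each of the two $K_0$'s contributes one $\coev_V$-loop in the first $\bt$-component carrying a $J'$ decoration, together with the multiplication $\mu_M$ gluing the resulting $\A$-element onto $M$. I would then use QYBE to slide the two $T_1$-braidings past the coev-loops and through each other, until the two $J'$ decorations sit adjacent to one another, separated only by braidings between two copies of $V$ coming from the first $\bt$-component; at that stage the left-handed reflection equation \eqref{eqn:refleqn2} for $J'$ applies and interchanges the order of the two $J'$'s. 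Sliding the braidings back to their original positions produces exactly the diagram for $K_0T_1K_0T_1$.

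The main obstacle will be the bookkeeping of the Deligne tensor product structure: $M$ lives in $\cC\bt\cC$, the $V_i$ are embedded as $\trivial\bt V_i$, and the $J'$'s sit on the first $\bt$-component while the explicit $V_1,V_2$ sit in the second. One must check that the reflection equation is being applied to a genuine $V\otimes V$ tensor in the first component (produced by $\coev_V$ inside $K_0$), and must track how $\sigma_{1\bt V,M}$ and $\mu_M$ interact with the sliding braidings. The non-degeneracy of $M$ as a $D_\U$-module guarantees that $\mu_M$ is equivariant for the ambient $\U\otimes \U$-action via $\partial_2$, so the required slide moves are legal. Should the direct tangle manipulation prove too cumbersome to display, I would fall back on Remark \ref{faithfulA}: verify $T_1K_0T_1K_0-K_0T_1K_0T_1=0$ as an element of $D_\U\otimes\End_\CC(V_1\otimes V_2)$ by applying it to the universal module $M=\widetilde{\A}$, where the formulas of Proposition \ref{proplist} make the $\partial_\rhd$-action explicit and the identity reduces, after a brief computation, directly to \eqref{eqn:refleqn2}.
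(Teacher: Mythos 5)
Your proposal correctly identifies the main ingredients: the $i\geq 2$ relations by disjointness of tensor factors, and the reduction of $T_1K_0T_1K_0=K_0T_1K_0T_1$ to the left-handed reflection equation for $J'$ via tangle manipulations, with the option of evaluating on the faithful module $M=\A$ (Remark~\ref{faithfulA}). However, your ``primary'' plan and your ``fallback'' have swapped roles relative to what actually works. The abstract diagrammatic argument you propose first would get stuck: the operator $K_0$ involves the black-box morphism $\mu_M$, and $\U\otimes\U$-equivariance of $\mu_M$ (which is what non-degeneracy buys you) only lets you slide braidings \emph{past} $\mu_M$; it does not let you commute the two $K_0$-loops or bring the two $J'$ decorations adjacent, because that requires knowing the internal structure of the multiplication. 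The paper's proof therefore \emph{must} specialize to $M=\A$ (your ``fallback''), where $\mu_\A$ is explicit on the generating subspaces $W^*\bt W$, and then the crucial first move is to apply the defining relations of $\A$ from equation~\eqref{eqn:Arelns} across the $\bt$-separator --- the paper phrases this as noting that the two tangles appearing there are adjoint-inverse to one another. This $\A$-relations step, which lets you transfer a morphism from one $\bt$-component to the other, is what brings the two $J'$'s into position for the left-handed reflection equation, and it is missing from your write-up. Once you make that step explicit, the rest (QYBE slides plus \eqref{eqn:refleqn2}) goes through as you describe.
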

\begin{proof} 
The second set of relations is clear because in this case $T_i$ and $K_0$ act on distinct tensor factors. To show the first relation, we will compute it explicitly in the case $M=\A$, as in Remark \ref{faithfulA}.  For this, we can explicitly compute the multiplication $\mu_M=\mu_\A$ on the generating subspaces $W^*\bt W$ of $\A$, where $K_0$ takes the simpler form of Figure \ref{K0forA}.
\begin{figure}[h]
\begin{center}
\includegraphics[height=2in]{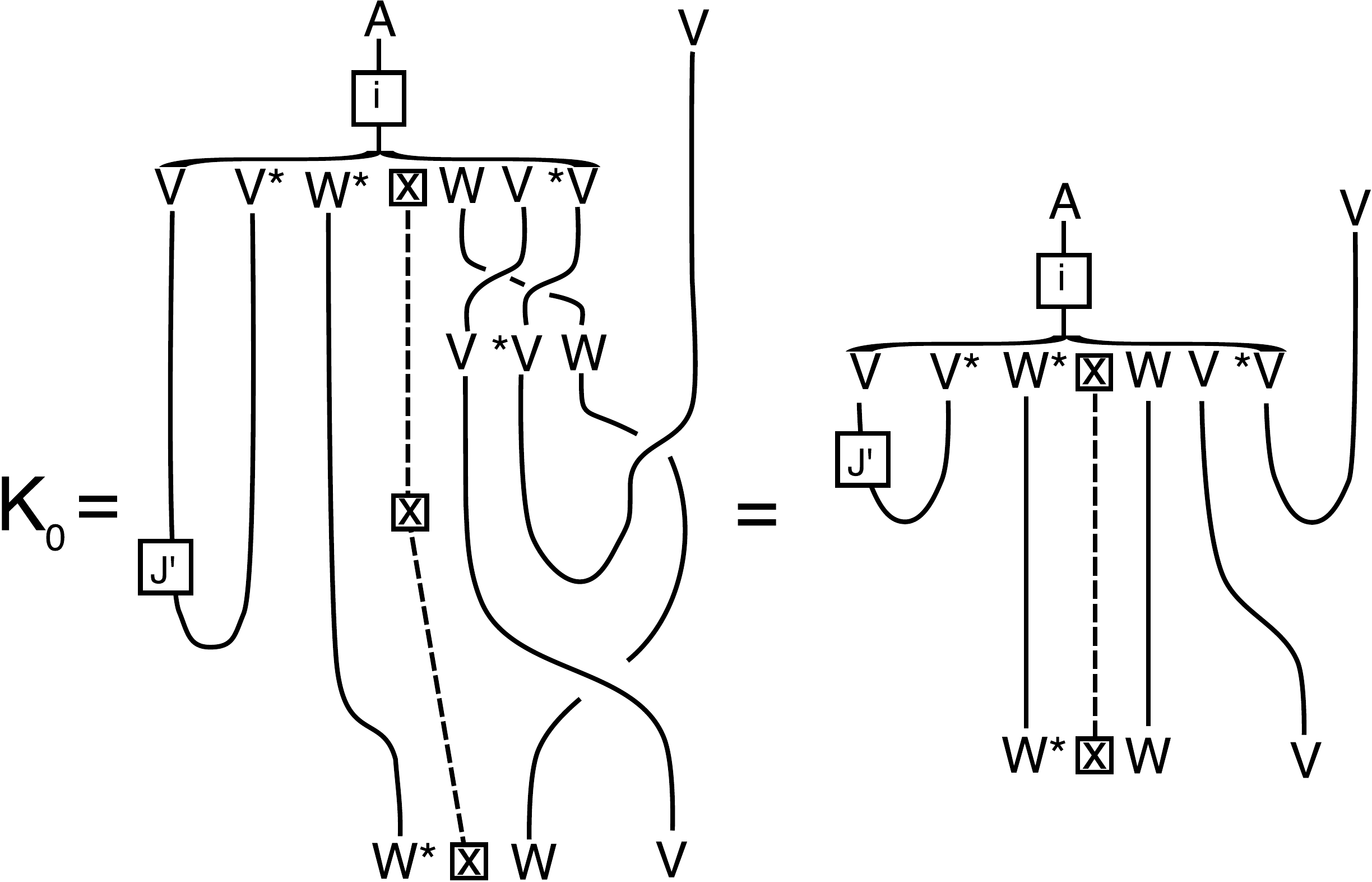}
\end{center}
\caption{$K_0$ acting on the generating subspace $W^*\bt W$ of $\A$.}\label{K0forA}
\end{figure}

In Figure \ref{fig:KTKTpf}, we prove the relation $T_1K_{0}T_1K_{0}=K_{0}T_1K_{0}T_1$.
\begin{figure}[h]
\begin{center}
\includegraphics[width=4.5in]{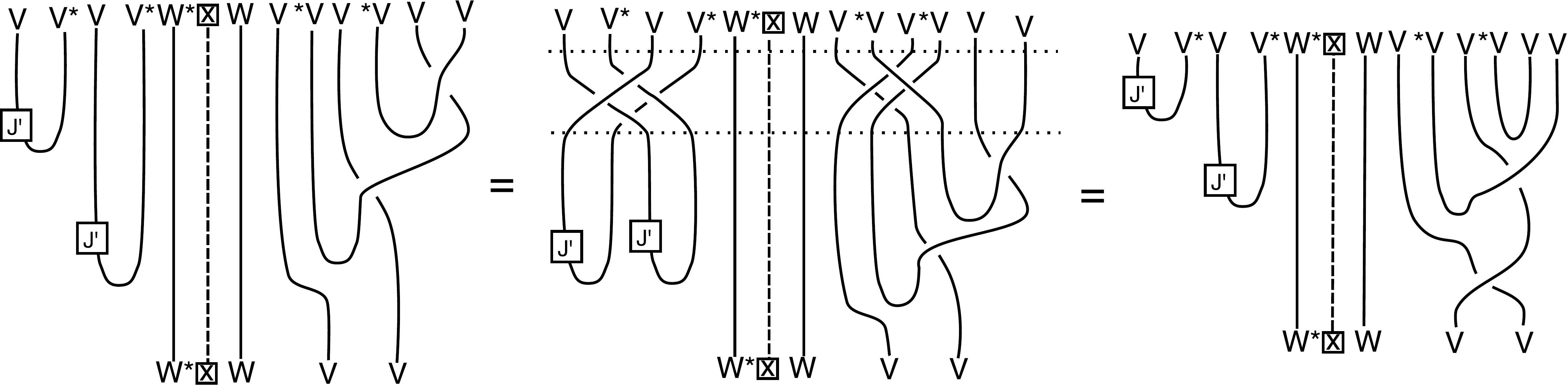}
\end{center}
\caption{Proof of $T_1K_0T_1K_0=K_0T_1K_0T_1$.  The first equality applies the relations in equation \eqref{eqn:Arelns} between the dotted lines, noting that the two tangles appearing there are adjoint-inverse to one another.  The second equality applies QYBE and the left-handed reflection equation for $J'$.}\label{fig:KTKTpf}
\end{figure}
\end{proof}

It remains to show relation \eqref{T0T11K0T1reln} in Definition \ref{defn:dBdefn}.
\begin{lemma} \label{lem:invident} On the space of $\chi$-invariants, we have the identity
$$T_0^{-1}=\sigma_{V,M}\tilde{J}_{V_1}\sigma_{V,M}^{-1}, \textrm{ where} \quad
\tilde{J}=\sum E_i^l\rho_V(S(l^+_{ij}\chi(c_{jk})S(l^-_{kl}))).$$
\end{lemma}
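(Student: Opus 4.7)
The plan is to unpack the definition $T_0 = P_1^{-1}(\sigma_{V_1,M}\sigma_{M,V_1})^{-1}$, giving $T_0^{-1} = \sigma_{V_1,M}\sigma_{M,V_1}P_1$, and then to identify each of the factors on the right with explicit $L$-operator expressions on $M\otimes V_1$. Using the standard formulas $\sigma_{M,V}(m\otimes e_j) = \sum_i e_i\otimes l^+_{ij}m$ and $\sigma_{V,M}^{-1}(m\otimes e_j) = \sum_i e_i\otimes l^-_{ij}m$, together with the antipode identity $S(L^-_V) = (L^-_V)^{-1}$ (a consequence of $(\id\otimes S)(\cR) = \cR^{-1}$) that yields $\sigma_{V,M}\leftrightarrow (L^-_V)^{-1}$ and $\sigma_{M,V}^{-1}\leftrightarrow (L^+_V)^{-1}$, both sides of the claimed identity can be expanded into matrix expressions with entries in $\U$ acting on $M$ and on $V_1$.

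It suffices to treat the $n=1$ case; for general $n$, the operator $P_1$ passes through the auxiliary slots $V_2,\ldots,V_n$ using the quantum Yang-Baxter equation together with the reflection equation for $J$, exactly as in the proof of Proposition \ref{Kreln}, leaving the identical computation on $M\otimes V_1$. In the $n=1$ case, direct expansion gives the LHS as $\sum_l ((L^-_V)^{-1} L^+_V J)_{lj}\, m\otimes e_l$ and the RHS as $\sum_l ((L^-_V)^{-1}\tilde J L^-_V)_{lj}\, m\otimes e_l$; cancelling the common invertible prefactor $(L^-_V)^{-1}$ reduces the identity to showing $L^+_V J\cdot v = \tilde J L^-_V\cdot v$ on $\chi$-invariants.

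This reduced identity follows from $\chi$-invariance via the observation in the remark after Proposition \ref{JREprop}: the generators $c_{il}\in \B_f$ are precisely the matrix coefficients of the $\U$-valued matrix $c = L^+_V J S(L^-_V)$, and their coproduct $\Delta(c_{il}) = \sum l^+_{ia}S(l^-_{bl})\otimes c_{ab}$ (computed from $\Delta(l^\pm_{ij}) = \sum l^\pm_{ik}\otimes l^\pm_{kj}$ and the antipode compatibility) translates $\chi$-invariance into the matrix identity $L^+_V\cdot \rho_V(c)\cdot S(L^-_V)\cdot v = \chi(c)\cdot v$ on $M\otimes V_1$. Substituting $\rho_V(c_{ab})\to \chi(c_{ab})$ via this identity, applying the antipode $S$ to the residual $L^\pm$-factors, and matching against the explicit formula $\tilde J_{il} = \rho_V(S(l^+_{ij}\chi(c_{jk})S(l^-_{kl})))$ completes the proof.

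The main obstacle is the careful bookkeeping of the $L^\pm$-matrix products and antipode placements to match precisely the formula for $\tilde J$ in the statement; the underlying structure is dictated by the $c = L^+ J S(L^-)$ identification and the $\chi$-invariance substitution, but the rearrangement requires the standard $RLL$-type manipulations together with $S(L^-) = (L^-)^{-1}$ to reach the exact matrix form appearing in the definition of $\tilde J$.
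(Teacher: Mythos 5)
The proposal's core ideas---rewrite $T_0^{-1}=\sigma_{V,M}\sigma_{M,V}P_1$, recognize that the resulting operator involves the matrix $(c_{il})=L^+J S(L^-)$ of coideal generators, and trade the action of $c_{il}$ for the numerical matrix $\tilde J$ via the coproduct $\Delta(c_{il})=\sum_{j,k} l^+_{ij}S(l^-_{kl})\otimes c_{jk}$ and the $\chi$-invariance condition---are exactly the ingredients of the paper's proof. But there are two genuine gaps in your write-up.

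First, the reduction to $n=1$ is not justified as stated. You claim that ``$P_1$ passes through the auxiliary slots $V_2,\ldots,V_n$ using QYBE together with the reflection equation \ldots leaving the identical computation on $M\otimes V_1$.'' This is not what happens: $P_1$ contains the braidings $T_1\cdots T_{n-1}T_n T_{n-1}\cdots T_1$, which genuinely mix $V_1$ with $V_2,\ldots,V_n$, and the $\chi$-invariance condition is a condition on the full space $M\otimes V_1\otimes\cdots\otimes V_n$, not on $M\otimes V_1$. Nothing in $P_1$ ``passes through'' in a way that would let you discard the auxiliary slots. The paper avoids any reduction: it inserts $\sigma_{V,M}\sigma_{V,M}^{-1}$ on the right and then identifies the middle operator $\sigma_{M,V}\,P_1\,\sigma_{V,M}$ directly as $\sum_{i,l}(E_i^l)_{V_1}\otimes(c_{il})_{M\otimes V_2\otimes\cdots\otimes V_n}$; the $c_{il}$ act on the \emph{entire} ``rest,'' and the $\chi$-invariance identity $(1\otimes x)=(S(x_{(1)})\chi(x_{(2)})\otimes 1)$ is then applied with $A=V_1$ and $B=M\otimes V_2\otimes\cdots\otimes V_n$. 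Your $n=1$ computation, which I verified is correct in itself, effectively reproduces this with $B=M$, but the passage from $n=1$ to general $n$ is where your argument breaks.

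Second, the final step is only gestured at. The formula you write, ``$L^+_V\cdot\rho_V(c)\cdot S(L^-_V)\cdot v=\chi(c)\cdot v$,'' appears to have a slip ($\rho_V(c)$ should involve the action of $c$ on the $M$-factor, not $V_1$), and the promised ``matching against the explicit formula'' for $\tilde J$ via ``standard $RLL$-type manipulations'' is left undone. The clean version, which you have all the pieces for, is the Hopf-algebra identity $(1\otimes x)=(S(x_{(1)})\otimes 1)\Delta(x_{(2)})=(S(x_{(1)})\chi(x_{(2)})\otimes 1)$ on $\chi$-invariants, applied with $x=c_{il}$ and the left coideal property $\Delta(\B_f)\subset\U\otimes\B_f$; substituting the explicit coproduct gives exactly $\tilde J$.

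In short: same strategy as the paper, correct $n=1$ core computation, but the claimed reduction from general $n$ to $n=1$ is a gap that needs to be replaced by the paper's identification of $\sigma_{M,V}P_1\sigma_{V,M}$ with the full $c_{il}$-action, and the concluding substitution needs to be carried out rather than sketched.
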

\begin{proof}
We compute:

\begin{align*}
T_0^{-1} &= \sigma_{V,M}\sigma_{M,V} T_{(1\cdots n)}T_n T_{(n\cdots 1)}\\
         &= \sigma_{V,M}\sigma_{M,V} T_{(1\cdots n)}T_n T_{(n\cdots 1)} \sigma_{V,M}\sigma_{V,M}^{-1}\\
         &= \sigma_{V,M} (\sum_{i,l} (E_i^l)_{V_1}\ot (c_{il})_{M\ot V_2\ot\cdots V_n})\sigma_{V,M}^{-1}\\
         &= \sigma_{V,M} (\sum E_i^l\rho_V(S(l^+_{ij}\chi(c_{jk})S(l^-_{kl}))))_{V_1}\sigma_{V,M}^{-1},
\end{align*}
as desired.  In the final equality, we have applied the identity
$$(1\ot x) = (S(x_{(1)})\ot 1)(x_{(2)}\ot x_{(3)}) = (S(x_{(1)})\chi(x_{(2)})\ot 1)$$
to $x=c_{il}$, using the left coideal property for $\B_f$.
\end{proof}

\begin{figure}[h]
\begin{center}
 \includegraphics[width=2.75in]{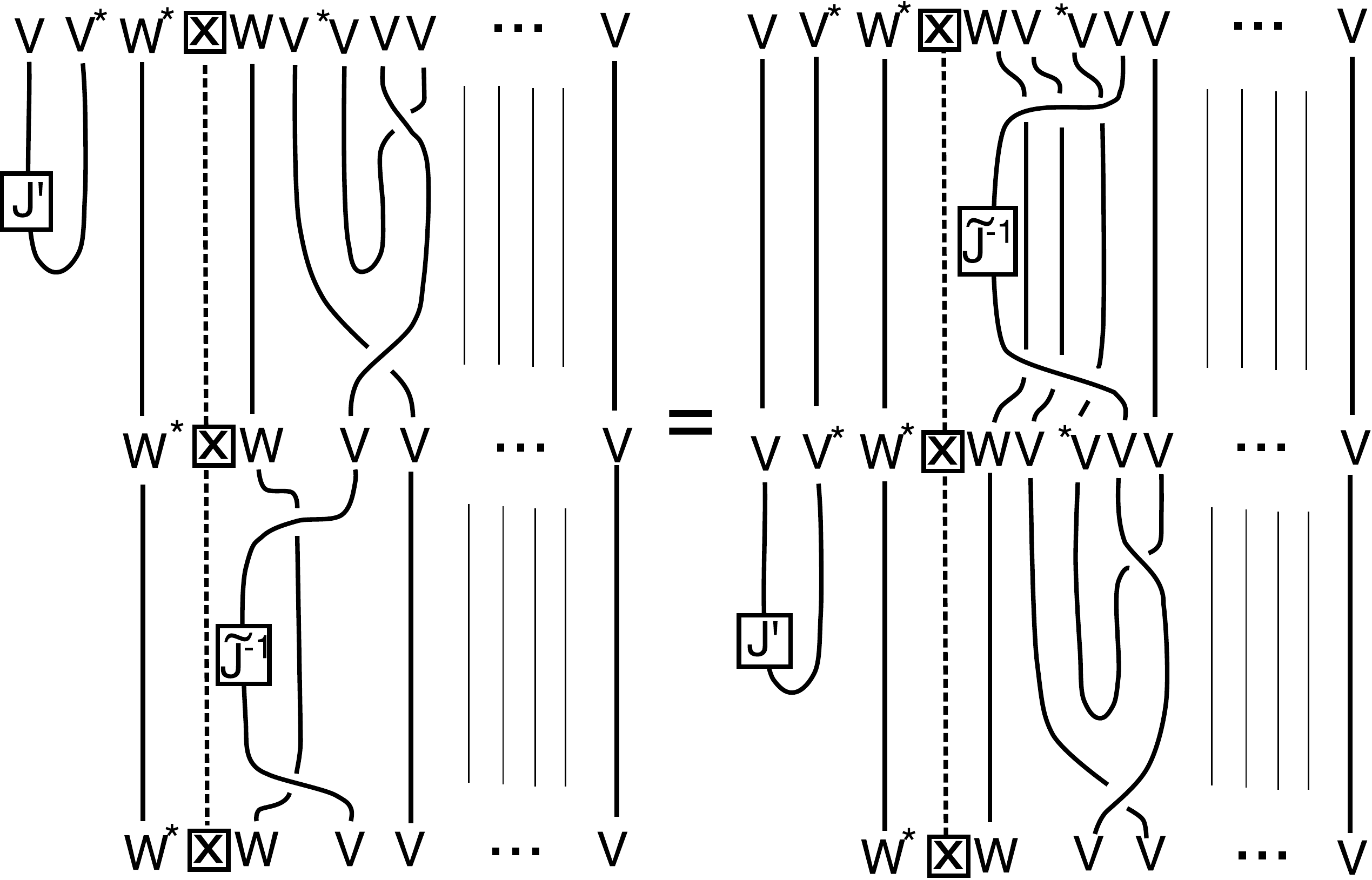}
\end{center}
\caption{Proof of relation $T_1^{-1}K_0T_1T_0=T_0T_1^{-1}K_0T_1$.  We have applied Lemma \ref{lem:invident} to simplify the appearance of $T_0$ in both sides of the equality.  The moves from the left hand side to the right hand side are only QYBE.}\label{fig:T0T1K0T1check}.
\end{figure}
The final relation \eqref{T0T11K0T1reln} of Definition \ref{defn:dBdefn} is computed in Figure \ref{fig:T0T1K0T1check}.  We have proven the following: 

\begin{theorem} \label{maintheorem}
The operators $T_0,\ldots T_n$ and $K_0$ define a representation 
of $\dB_{n}$ on $F^{f,\chi,g}_{n,V}(M)$.  We have a functor:
\begin{align*}F^{f,\chi,g}_{n,V}: \begin{array}{cc}\textrm{non-degenerate}\\ D_\U\textrm{-modules}\end{array} \longrightarrow \dB_{n}\textrm{-mod}.
\end{align*}
\end{theorem}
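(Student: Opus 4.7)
The plan is to verify one by one all the defining relations of $\dB_n$ from Definitions \ref{defn:affdefn} and \ref{defn:dBdefn}, and to note that the construction is manifestly functorial in $M$. Most of the labor has already been done in the preceding subsections; the proof consists of stringing together those results and addressing the remaining cross relation involving $T_0$, $T_1$, and $K_0$.

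First I would dispose of the purely affine braid relations \eqref{eqn:braid}--\eqref{eqn:braid0n} among $T_0,\dots,T_n$. These operators are defined exactly as in Section \ref{abg}, so Theorem \ref{easyprop} already gives them as an action of $\hB_n$ on $M\ot V_1\ot\cdots\ot V_n$. The one thing to check is that each $T_i$ preserves the subspace of $\B_f$-invariants; this is immediate because $T_1,\dots,T_{n-1}$ are braidings (which are $\U$-linear, hence $\B_f$-linear), $T_n=J_V$ is $\B_f$-linear by Proposition \ref{JREprop}, and $T_0$ is built from braidings with $M$ together with $P_1^{-1}\in\hB_n$, so it too preserves $\B_f$-invariants by the coideal property of $\B_f$ and functoriality of the braiding.

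Next I would turn to the three $K_0$-relations in Definition \ref{defn:dBdefn}. The commutations $K_0T_i=T_iK_0$ for $i\ge 2$ are visible on inspection, since $K_0$ acts nontrivially only on $M$ and on $V_1$, while $T_i$ ($i\ge 2$) acts on $V_i,V_{i+1}$. The ``double braid'' relation $T_1K_0T_1K_0=K_0T_1K_0T_1$ is Proposition \ref{Kreln}: by faithfulness of $\widetilde{\A}$ (Remark \ref{faithfulA}) it suffices to verify it on $M=\A$, where on the generating subspaces $W^*\bt W$ the operator $K_0$ takes the transparent form of Figure \ref{K0forA}, and the verification reduces via QYBE to a single application of the left-handed reflection equation for $J'$. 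Both operators preserve $\B_f$-invariants: $K_0$ is built from $\U$-linear morphisms on the $V_1$-factor (and from structure on $M$ belonging to the commuting copy of $\U'$), so it automatically lands in $\B_f$-invariants.

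The main obstacle is the remaining cross relation \eqref{T0T11K0T1reln}, namely $T_0T_1^{-1}K_0T_1=T_1^{-1}K_0T_1T_0$. The subtlety is that both $T_0$ and $K_0$ interact with the $M$-factor, so one cannot simply invoke that they act on disjoint tensors. The key step is Lemma \ref{lem:invident}, which on the subspace of $\chi$-invariants rewrites $T_0^{-1}$ as $\sigma_{V,M}\tilde{J}_{V_1}\sigma_{V,M}^{-1}$, essentially localizing $T_0$ to an interaction of $V_1$ with $M$. After this rewriting both sides of the desired relation become tangle diagrams involving only braidings together with a single $\tilde{J}$ on $V_1$ and the $J',\coev,\mu_M$ ingredients of $K_0$; they are then transformed into one another purely by QYBE moves, as carried out in Figure \ref{fig:T0T1K0T1check}.

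Finally, functoriality is automatic: any morphism $\varphi:M\to M'$ of non-degenerate $D_\U$-modules induces $\varphi\ot\id^{\ot n}$, which commutes with each $T_i$ and with $K_0$ since all of these operators are built from $\U\ot\U'$-equivariant ingredients (braidings, $J_V$, $J'_V$, coevaluations, and the multiplication structure carried by $M$ through $\partial_2$). Hence $F^{f,\chi,g}_{n,V}$ is a functor to $\dB_n$-mod, completing the proof.
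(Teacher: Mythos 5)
Your proof is correct and follows essentially the same route as the paper: the affine braid relations are inherited from Theorem \ref{easyprop}, the relations involving $K_0$ are exactly Proposition \ref{Kreln} (via faithfulness of $\widetilde{\A}$ and the left-handed reflection equation for $J'$), and the remaining cross relation \eqref{T0T11K0T1reln} is handled by Lemma \ref{lem:invident} plus the QYBE manipulations in Figure \ref{fig:T0T1K0T1check}. Your additional remarks about $\B_f$-invariance being preserved (using the left coideal property, Proposition \ref{JREprop}, and $\U$-linearity of the braidings on the second $\bt$-component) and about functoriality are correct and usefully make explicit two points the paper leaves mostly implicit.
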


\section{Quantum groups and quantum symmetric pairs}{\label{sec:QQ}}

\subsection{The Drinfeld-Jimbo quantum group $\cU_{\tq}(\mathfrak{gl}_N)$ and its representations}\label{sec:QG}

Let $\mathfrak{g}=\mathfrak{gl}(N,\mathbb{C})$ be the Lie algebra
of general linear algebraic group \linebreak\mbox{$G=\GL(N,\mathbb{C})$}. 
Let $\cE_N=\mathbb{R}^N$, with standard basis $\varepsilon_{i}$ and inner product 
\mbox{$(\varepsilon_{i},\varepsilon_j)=\delta_{ij}$}. 
Let $\Pi_{+}^{A_{N-1}}=\{\alpha_{i}=\varepsilon_{i}-\varepsilon_{i+1}|i=1, \ldots, N-1\}$
be the set of simple roots of $\mathfrak{g}$
and $\Pi^{A_{N-1}}$ be the set of roots.  Let $\Lambda$ (resp. $\Lambda^{+}$) be the set of integral (dominant integral) weights for $\mathfrak{gl}_N$:
$$\Lambda = \{ m_1\varepsilon_1 + \cdots + m_N\varepsilon_{N}\,\, |\,\, m_i\in\mathbb{Z}\}.$$
$$\Lambda^+ = \{ m_1\varepsilon_1 + \cdots + m_N\varepsilon_{N}\,\, |\,\, m_i\in\mathbb{Z}, m_1\geq \cdots \geq m_N\}.$$
We let $\omega_N:=\varepsilon_1 + \cdots + \varepsilon_N$ denote the fundamental weight corresponding to the determinant representation.

Let $\tq\in\mathbb{C}^\times$ be a 
nonzero complex number and assume $\tq$ is not a root of unity.
Set $E_{i}:=E_{\alpha_{i}}$ and $F_{i}:=F_{\alpha_{i}}$ for each simple root.
Then the Drinfeld-Jimbo algebra $\cU_{\tq}(\mathfrak{g})$ is generated by elements
$E_{i}, F_{i},$ $(1\leq i\leq N-1)$, and 
$K_{j}, K_{j}^{-1}$ $(1\leq j\leq N)$,
with relations:
\begin{eqnarray*}
& K_{i}K_{j}-K_{j}K_{i}=0,\quad K_{i}K_{i}^{-1}=K_{i}^{-1}K_{i}=1,\\
& K_{i}E_{j}K_{i}^{-1}=\tq^{\delta_{i,j}-\delta_{i,j+1}}E_{j}, \quad
K_{i}F_{j}K_{i}^{-1}=\tq^{-\delta_{i,j}+\delta_{i,j+1}}F_{j},\\
& E_{i}F_{j}-F_{j}E_{i}
=\delta_{i,j}\frac{K_{i}K_{i+1}^{-1}-K_{i}^{-1}K_{i+1}}{\tq-\tq^{-1}},\\
& E_{i}E_{j}-E_{j}E_{i}=0,\quad F_{i}F_{j}-F_{j}F_{i}=0,\quad |i-j|\leq 2,\\
& E_{i}^{2}E_{i\pm 1}-(\tq+\tq^{-1})E_{i}E_{i\pm 1}E_{i}
+E_{i\pm 1}E_{i}^{2}=0,\\
& F_{i}^{2}F_{i\pm 1}
-(\tq+\tq^{-1})F_{i}F_{i\pm 1}F_{i}+F_{i\pm 1}F_{i}^{2}=0.
\end{eqnarray*}
For any $\lambda\in \Lambda$ with $\lambda=\sum_{i}n_{i}\varepsilon_{i}$,
we will denote $K^{\lambda}:=K_1^{n_1}\cdots K_N^{n_{N}}$.
The Hopf structure on $\cU_{\tq}(\g)$ is given by:
\begin{eqnarray*}
& \Delta(K_{i}^{\pm})=K_{i}^{\pm}\otimes K_{i}^{\pm}, 
\quad \Delta(E_{i})=E_{i}\otimes K_{i}K_{i+1}^{-1}+1\otimes E_{i},\\
& \Delta(F_{i})=F_{i}\otimes 1+K_{i}^{-1}K_{i+1}\otimes F_{i}, 
\quad \epsilon(K_{i})=1, \quad \epsilon(E_{i})=\epsilon(F_{i})=0,\\
& S(K_{i})=K_{i}^{-1}, \quad S(E_{i})=-E_{i}K_{i}^{-1}K_{i+1},\quad
S(F_{i})=-K_{i}K_{i+1}^{-1}F_{i}.
\end{eqnarray*}

We will consider the block of type I $\cU_{\tq}(\g)$-modules, where the generators $K_i$ act on a vector $v$ of weight $\mu$ by $\tq^{<\varepsilon_i,\mu>}$.  See \cite{KlSch} for details.

\subsection{The vector representation of $\cU_\tq(\g)$}

Now let $e_{i}$ be the standard basis for $V=\mathbb{C}^{N}$.
The vector representation $\rho_{V}$ of $\cU_\tq(\g)$ on 
$V=\mathbb{C}^{N}$ is given by:
\begin{eqnarray*}
&\rho_{V}(K_{i})=\tq^{-1}E_i^i+\sum_{i\neq j}E_j^j,\quad i=1,\ldots, N,\\
&\rho_{V}(E_{i})=E_{i+1}^i,\quad \rho_{V}(F_{i})=E_i^{i+1},\quad i=1, \ldots, N-1.
\end{eqnarray*} 

The $R$-matrix for the vector representation can be expressed explicitly:
\begin{equation}\label{eqn:R}
R:=(\rho_V\ot\rho_V)\circ\cR
= \tq\sum_{i}E_i^i\otimes E_i^i
+\sum_{i\neq j}E_i^i\otimes E_j^j+
(\tq-\tq^{-1})\sum_{i>j}E_i^j\otimes E_j^i.
\end{equation}

We define $R^{ik}_{jl},(R^{-1})^{ik}_{jl}\in\CC$, for $i,j,k,l=1,\ldots,N$ by
\begin{eqnarray*}
R(e_{i}\otimes e_{j})
=\sum_{i,j}R_{ij}^{kl}(e_{k}\otimes e_{l}),\quad
R^{-1}(e_{i}\otimes e_{j})
=\sum_{i,j}(R^{-1})_{ij}^{kl}(e_{k}\otimes e_{l}).
\end{eqnarray*}
We can write the coefficients explicitly as follows:
{\small
\begin{equation}\label{eqn:R4}
R_{ij}^{kl}=\left\{\begin{array}{ccc}\tq, &  & i=j=k=l; \\1, &  & i=k\neq j=l; 
\\\tq-\tq^{-1}, &  & i=l<j=k; \\0, &  &\text{otherwise};\end{array}\right.
(R^{-1})_{ij}^{kl}=\left\{\begin{array}{ccc}\tq^{-1}, &  & i=j=k=l; 
\\1, &  & i=k\neq j=l; \\\tq^{-1}-\tq, &  & i=l<j=k; \\0, &  &\text{otherwise}.
\end{array}\right.
\end{equation}
}
We will use the notation $L^{\pm}$and $l_{ij}^{\pm}$ for $L_V^{\pm}$ and $l_{ij}^{V,\pm}$, when $V$ is the vector representation.
The elements $l_{ij}^{\pm}$ satisfy the following relations:
\begin{eqnarray}{\label{l-rel1}}
&L_{1}^{\pm}L_{2}^{\pm}R=RL_{2}^{\pm}L_{1}^{\pm}, \quad
L_{1}^{-}L_{2}^{+}R=RL_{2}^{+}L_{1}^{-},\\
{\label{l-rel2}}
&l_{ii}^{+}l_{ii}^{-}=l_{ii}^{-}l_{ii}^{+}=1, \quad i=1, \ldots, N,\\
{\label{l-rel3}}
&l_{ij}^{+}=l_{ji}^{-}=0, \quad i>j.
\end{eqnarray}
Here $L_{\pm}=(l_{ij}^{\pm})$ and 
$L^{\pm}_{1}=L^{\pm}\otimes \Id$, 
$L^{\pm}_{2}=\Id\otimes L^{\pm}$ which are $N^{2}\times N^{2}$
matrices. In fact, we have the following theorem.
\begin{theorem}[ See e.g. \cite{KlSch}, Ch. 8]{\label{thm:lfun}}
The Drinfeld-Jimbo algebra $\cU_\tq(\g)$ is generated by the $l_{ij}^{\pm}$, $i,j=1,\ldots, n,$
with relations \eqref{l-rel1},\eqref{l-rel2}, and \eqref{l-rel3}.
The antipode $S$, coproduct $\Delta$ and counit $\epsilon$ are given by
\begin{align*}
S(L^{\pm})=(L^{\pm})^{-1},\quad \Delta(l_{ij}^{\pm})=\sum_{k}l_{ik}^{\pm}\otimes l_{kj}^{\pm},\quad
\text{ and }\quad \epsilon(l_{ij}^{\pm})=\delta_{ij}.
\end{align*}
\end{theorem}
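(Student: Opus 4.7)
My plan is to treat this as the FRT-style reconstruction of $\cU_\tq(\g)$: show that the $l^\pm_{ij}$ (defined via $\cR$ on the vector representation) satisfy the listed relations, show that they generate $\cU_\tq(\g)$, and finally argue there are no extra relations. Throughout, the main computational tool is the explicit form \eqref{eqn:R} of $R$ on $V\ot V$, combined with the general identities satisfied by a universal $R$-matrix.

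First, I would verify that the $l^\pm_{ij}\in\cU_\tq(\g)$ satisfy \eqref{l-rel1}--\eqref{l-rel3}. The mixed equations $L_1^\pm L_2^\pm R=RL_2^\pm L_1^\pm$ and $L_1^-L_2^+R=RL_2^+L_1^-$ follow by applying $\rho_V\ot\rho_V$ to the identities $\cR_{12}\cR_{13}\cR_{23}=\cR_{23}\cR_{13}\cR_{12}$ (i.e. the QYBE) and $(\Delta^{\mathrm{cop}}\ot\Id)(\cR)\cR_{12}=\cR_{12}(\Delta\ot\Id)(\cR)$, evaluated in the second and third tensor slot. Relations \eqref{l-rel2} come from $\cR\cR^{-1}=\cR^{-1}\cR=1$ together with the diagonal form of $R$ on weight vectors. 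Relations \eqref{l-rel3} are immediate from the upper-triangular structure of $R$ visible in \eqref{eqn:R}: only $(i,j)$ with $i\leq j$ contribute to $E_i^j$-coefficients with respect to the standard basis.

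Next, I would check that $\{l^\pm_{ij}\}$ generate $\cU_\tq(\g)$ by identifying them with renormalizations of the Chevalley generators. Using \eqref{eqn:R} and the definition of the $L$-operator, direct computation yields
\begin{align*}
l^+_{ii}&=K_i^{-1},\quad l^-_{ii}=K_i,\\
l^+_{i,i+1}&=(\tq^{-1}-\tq)\,F_iK_{i+1}^{-1},\quad l^-_{i+1,i}=(\tq-\tq^{-1})\,K_i^{-1}E_i,
\end{align*}
up to the normalization conventions, while higher-distance off-diagonal $l^\pm_{ij}$ arise from iterated commutators of the above. Hence every Chevalley generator lies in the subalgebra generated by the $l^\pm_{ij}$.

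The Hopf algebra statements are then direct: $(\Delta\ot\Id)(\cR)=\cR_{13}\cR_{23}$ gives $\Delta(l^\pm_{ij})=\sum_k l^\pm_{ik}\ot l^\pm_{kj}$ upon taking matrix coefficients in the last slot, while $\epsilon(l^\pm_{ij})=\delta_{ij}$ follows from $(\epsilon\ot\Id)(\cR)=1$. The antipode formula $S(L^\pm)=(L^\pm)^{-1}$ follows from $(S\ot\Id)(\cR)=\cR^{-1}$ together with the matrix identity $S(l^\pm_{ij})=\sum_k (L^\pm)^{-1}_{ik}\delta_{kj}$.

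The genuinely nontrivial step, which I expect to be the main obstacle, is completeness of the relations: I must show that the abstract algebra $\widetilde{\cU}$ presented by generators $\{l^\pm_{ij}\}$ modulo \eqref{l-rel1}--\eqref{l-rel3} has no \emph{extra} relations beyond those already holding in $\cU_\tq(\g)$. The cleanest way is a PBW / Poincaré-series argument: construct a candidate PBW basis of $\widetilde{\cU}$ using an ordering on the $l^\pm_{ij}$ compatible with the triangularity relations \eqref{l-rel3}, use \eqref{l-rel1} to obtain straightening rules between any two generators, and bound the graded dimension of $\widetilde{\cU}$ from above by the known graded dimension of $\cU_\tq(\g)$. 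Combined with the surjection $\widetilde{\cU}\twoheadrightarrow\cU_\tq(\g)$ from the previous paragraph, this forces an isomorphism. This kind of dimension-count argument is standard in the quantum group literature (e.g.\ Klimyk--Schm\"udgen, Ch.~8), and is where I would invest the bulk of the technical effort.
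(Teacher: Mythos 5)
The paper does not prove Theorem \ref{thm:lfun}: it is stated with a citation to Klimyk--Schm\"udgen, Chapter~8, and used as a black box. So there is no in-paper proof to compare against, and your proposal should be read as a reconstruction of the cited result. Your overall strategy is exactly the standard FRT/$L$-operator argument, which is the approach taken in the cited reference, and you correctly identify the genuinely delicate step (absence of extra relations, handled via a PBW/graded-dimension comparison) and locate the routine steps (deriving the relations, Hopf maps, generation).

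A few details are stated loosely and would need repair in a write-up. First, the $R L L$ relations are three-slot identities: you should apply $\Id\ot\rho_V\ot\rho_V$ (for $L^+L^+$) or $\rho_V\ot\rho_V\ot\Id$ (for $L^-L^-$) to the QYBE $\cR_{12}\cR_{13}\cR_{23}=\cR_{23}\cR_{13}\cR_{12}$, not ``$\rho_V\ot\rho_V$''; and the auxiliary identity you cite, $(\Delta^{\mathrm{cop}}\ot\Id)(\cR)\cR_{12}=\cR_{12}(\Delta\ot\Id)(\cR)$, expands using $(\Delta\ot\Id)(\cR)=\cR_{13}\cR_{23}$ to $\cR_{23}\cR_{13}\cR_{12}=\cR_{12}\cR_{13}\cR_{23}$, which is again the QYBE, so it does not give the mixed relation $L_1^-L_2^+R=RL_2^+L_1^-$ for free; for that one needs to insert an $\cR_{13}^{-1}$ and remember that $L^-$ is built from $\cR^{-1}$ in the opposite slot. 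Second, relations \eqref{l-rel2} do not come from ``$\cR\cR^{-1}=1$'' in any direct way, since $L^+$ and $L^-$ pick out $\cR$ and $\cR^{-1}$ on opposite legs; the clean argument is that $l^+_{ii}$ and $l^-_{ii}$ are (inverse) group-like elements of the torus, which falls out of the explicit identification $l^{\pm}_{ii}=K_i^{\mp1}$ (consistent with the paper's $\rho_V(K_i)=\tq^{-1}E_i^i+\sum_{j\ne i}E_j^j$). Third, your antipode sentence is circular as written: the content is simply that $(S\ot\Id)(\cR)=\cR^{-1}$, so applying $\Id\ot\rho_V$ gives $S$ entrywise on $L^+$ equal to $(L^+)^{-1}$, and similarly with $\cR^{-1}$ and $(\Id\ot S)$ for $L^-$; there is no separate ``matrix identity'' to invoke. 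With these repairs your outline matches the standard proof.
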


By their definition, the elements $l^{\pm}_{ij}$ act on $V=\CC^N$ via the $R$-matrix; more precisely, we have 
\begin{align*}
\rho_{V}(l_{ij}^{+})=\sum_{k,l}R_{lj}^{ki}E_k^l, \quad
\rho_{V}(l_{ij}^{-})=\sum_{k,l}(R^{-1})^{ik}_{jl}E_k^l.
\end{align*}

\subsection{Non-degenerate quantum $D$-modules for $\cU_\tq(\mathfrak{gl}_N)$}\label{nondegqDmods}
In Section \ref{nondegDmods}, we have introduced the notion of non-degeneracy for $D_\U$-modules.  This condition is necessary for technical reasons; however, in this section we show that the restriction is a mild one in the case $\U=\cU_q(\mathfrak{gl}_N)$ (which we assume in this section).


\begin{proposition} 
$\U$ is generated as an algebra by 
$\U'$ and $K_{1},\ldots,K_{N}$.
\end{proposition}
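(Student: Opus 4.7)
The plan is to show that each Chevalley generator $E_i$ and $F_i$ lies in $\U' \cdot T$, where $T := \CC[K_1^{\pm 1},\ldots,K_N^{\pm 1}]$. Since the $E_i$, $F_i$, and $K_j^{\pm 1}$ generate $\U$ as an algebra, and $T \subset \U' \cdot T$ trivially, this immediately yields $\U = \U' \cdot T$.

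The key input is that the element $K_{2\alpha_i} := K_i^{2} K_{i+1}^{-2}$ lies in $\U'$; this is a consequence of the Joseph--Letzter theorem (Theorem~\ref{JLthm}), which states that $K_{2\lambda}$ generates a finite-dimensional submodule under $\ad(\U)$ for every dominant integral weight $\lambda$. As a first, rank-one check, I would verify via the coproduct and antipode formulas that
$$\ad(E_i)(K_{2\alpha_i}) = (1-\tq^{4})\, E_i K_{\alpha_i}, \qquad \ad(F_i)(K_{2\alpha_i}) = (1-\tq^{-4})\, F_i K_{2\alpha_i},$$
and that $\ad(E_i)^{3}(K_{2\alpha_i}) = \ad(F_i)^{3}(K_{2\alpha_i}) = 0$, confirming closure of the $\ad$-orbit along the rank-one subalgebra generated by $E_i, F_i, K_{\alpha_i}^{\pm 1}$.

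Since $\U'$ is closed under $\ad(\U)$, the first pair of identities gives $E_i K_{\alpha_i} \in \U'$ and $F_i K_{2\alpha_i} \in \U'$, whence
$$E_i = (E_i K_{\alpha_i}) \cdot K_{\alpha_i}^{-1} \in \U' \cdot T, \qquad F_i = (F_i K_{2\alpha_i}) \cdot K_{2\alpha_i}^{-1} \in \U' \cdot T,$$
completing the argument.

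The main obstacle is the verification that $K_{2\alpha_i}$ truly lies in $\U'$. Iterating $\ad(E_j)$ and $\ad(F_j)$ for $j \neq i$ on $K_{2\alpha_i}$ produces expressions of unbounded $E_j$- and $F_j$-degree, so closure of the full $\ad(\U)$-orbit is not manifest from direct computation; the Joseph--Letzter theorem supplies this closure by identifying $(\ad\U)(K_{2\alpha_i})$ with a finite-dimensional $\ad\U$-module.
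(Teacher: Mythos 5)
Your overall strategy matches the paper's: use the Joseph--Letzter theorem to place some $K^\lambda$ in $\U'$, then apply $\ad(E_i)$ and $\ad(F_i)$ to it to produce $E_i K^\mu$ and $F_i K^{\mu'}$ (up to nonzero scalar), and finally clear the Cartan factors using the $K_j$. The gap is in your specific choice $\lambda = 2\alpha_i$. The Joseph--Letzter theorem as stated (Theorem~\ref{JLthm}) says
$\U'=\bigoplus_{\lambda\in -2\Lambda^{+}+\mathbb{Z}\omega_N}\cU_{\tq}(\g)\rhd K^{\lambda}$,
so the $K^\lambda$ it puts in $\U'$ are indexed by $\lambda = -2\mu + k\omega_N$ with $\mu\in\Lambda^+$; but $\alpha_i$ is not dominant, and a short check shows that neither $2\alpha_i$ nor $-2\alpha_i$ lies in $-2\Lambda^{+}+\mathbb{Z}\omega_N$ once $N\geq 3$. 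In fact $K^{2\alpha_i}$ (the positive power) is not even locally finite in the rank-one case: the locally finite part of $\cU_\tq(\mathfrak{sl}_2)$ for the left adjoint action is $\bigoplus_{n\geq 0}\ad(\U)(K^{-2n})$, which excludes $K^{2}$. So your invocation of Joseph--Letzter (``$K_{2\lambda}$ for every dominant integral $\lambda$'') does not apply to $\lambda=\alpha_i$, and the ``main obstacle'' you flag is genuine and not surmountable for this $\lambda$.

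The rank-one sanity check $\ad(E_i)^3(K_{2\alpha_i})=\ad(F_i)^3(K_{2\alpha_i})=0$ is also not reliable evidence of finiteness: having finite orbits under powers of $\ad(E_i)$ and of $\ad(F_i)$ separately does not imply the full $\ad(\U)$-orbit is finite-dimensional (mixing $\ad(E_i)$ and $\ad(F_i)$ already produces new Cartan-type terms via the relation $E_iF_i - F_iE_i$).

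The repair is simple and brings you back to the paper's argument: take any $\lambda\in -2\Lambda^+$ with $(\alpha_i,\lambda)\neq 0$, e.g.\ $\lambda=-2\omega_i$ with $\omega_i=\varepsilon_1+\cdots+\varepsilon_i$. Then $K^\lambda\in\U'$ by Theorem~\ref{JLthm}, and the same adjoint computation
$E_i\rhd K^\lambda = (1-\tq^{(\alpha_i,\lambda)})E_iK^\lambda K_{i+1}K_i^{-1}$
gives $E_iK^\lambda K_{i+1}K_i^{-1}\in\U'$; multiplying by the inverse of $K^\lambda K_{i+1}K_i^{-1}$ (which lies in the algebra generated by $\U'$ and the $K_j$, using $K^{\pm\omega_N}\in\U'$ to obtain the $K_j^{-1}$) yields $E_i$, and similarly for $F_i$.
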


\begin{proof} 
Recall that $x\rhd y:=x_{(1)}yS(x_{(2)})$ denotes the adjoint action of $\U$ on itself.  We will use the following theorem due to A. Joseph and G. Letzter.  Note that we use slightly different conventions for the $K_i$, and that the statement is adapted to account for the central element $K_{\omega_N}\in\cU_\tq(\mathfrak{gl}_N)$).
\begin{theorem}[see \cite{JL}, Theorem 4.10]\label{JLthm}
$$\U'=\bigoplus_{\lambda\in -2\Lambda^{+}+\mathbb{Z}\omega_N}\cU_{\tt q}(\g)\rhd K^{\lambda}.$$
\end{theorem}

Now let $\U''$ be the algebra generated by $\U'$ and $K_{1}, \ldots, K_{N}$.  It is easy to see that $K_{1}^{\pm 1},\ldots, K_{N}^{\pm 1}\in \U''$.  For $\lambda\in -2\Lambda^{+}$ and $i=1, \ldots, n$, we have:
\begin{align*}
E_i\rhd K^\lambda&=E_iK^{\lambda}K_{i+1}K_{i}^{-1} - K^{\lambda}E_i K_{i}^{-1}K_{i+1}
= (1-\tq^{(\alpha_{i}, \lambda)})E_iK^{\lambda}K_{i+1}K_{i}^{-1},  \\
F_{i}\rhd K^{\lambda}&=F_i K^{\lambda} - K^{\lambda}F_{i}= (1-\tq^{-(\alpha_{i}, \lambda)})F_{i} K^{\alpha}.
\end{align*}
Thus $E_{i}$ and $F_{i}\in \U''$ as well, and so $\U''=\U$.
\end{proof}

It follows that $\U$ is obtained from $\U'$ in a two-step process:  first one localizes $\U'$ at its denominator set generated by the $K^{-2\alpha_i}$, and then one adjoins a square root $K^{\alpha_i}$ of each $K^{2\alpha_i}$.

\subsection{The classical symmetric pair and quantum symmetric pair}{\label{sec:symmpair}}

Let $\g$ be a reductive Lie algebra with Cartan decomposition 
$\mathfrak{g}=\mathfrak{n^{-}}\oplus\mathfrak{h}\oplus\mathfrak{n^{+}}$.
Suppose we have an involution of $\g$, denoted by $\theta$. 
Let $\kk=\g^{\theta}$ be the fixed Lie subalgebra in $\g$ under the involution. 
Then the pair $(\g, \kk)$ is called a (classical) symmetric pair. 

Our primary example of a symmetric pair is constructed as follows.  Let $\g=\mathfrak{gl}(N)$ with $N=p+q$.
Let $\theta$ be the involutive automorphism of $\g$ defined 
by $\theta(u):=JuJ$ where
\begin{equation*}
J=\sum_{1\leq k\leq p}E_k^k-\sum_{p+1\leq k\leq N}E_k^k.
\end{equation*}
The corresponding Lie subalgebra $\kk$ 
is $\mathfrak{gl}(p)\times \mathfrak{gl}(q)$ 
and we get the symmetric pair
$(\mathfrak{gl}(N), \mathfrak{gl}(p)\times \mathfrak{gl}(q))$. 
For our purpose, we would like to consider another symmetric pair $(\g, \kk')$ as in \cite{DS}. The involution $\theta'$ of this symmetric pair is given by
$\theta'(u)=J'uJ'$ with 
\begin{equation}{\label{eqn:J'}}
J'=\sum_{1\leq k\leq p}E_k^{N-k+1}
+\sum_{1\leq k\leq p}E_{N-k+1}^k - \sum_{p<k<N-p+1}E_k^k.
\end{equation}
It is easy to see that $\kk$ and $\kk'$ are conjugate to each other by the matrix $g$ of equation \eqref{gdef}. 

The theory of quantum symmetric pairs provides an analog of 
classical symmetric pairs in the setting of quantum groups.  
It was developed systematically by G. Letzter in a series of papers \cite{L1, L2}, with many examples coming from so-called Noumi coideal subalgebras \cite{N,NS,OS}.  

Let $(\g,\kk)$ denote a classical symmetric pair.  
A {\em quantum symmetric pair} associated to $(\g,\kk)$ is a pair 
$(\cU_\tq(\g), \mathcal{I})$, where $\mathcal{I}$ is a right coideal
subalgebra in $\cU_\tq(\g)$, such that the quasi-classical limit as $\tq\to 1$
recovers $\cU(\kk)$.  
The coideal formalism arises because while $\cU(\kk)$ is a sub-Hopf algebra of 
$\cU(\g)$, the quantization $\cI$ of $\cU(\kk)$ inside $\cU_\tq(\g)$ is no 
longer a sub-coalgebra, but only a one-sided coideal.

\subsection{The one parameter family of coideal subalgebras}{\label{sec:coidealsubalg}}

The symmetric pair $(\g, \kk')$ 
can be quantized via the method of characters $f:F_2(\A)\to \CC$, 
where $F_2(\A)$ is the braided dual of $\cU_\tq(\mathfrak{gl}_N)$.  
Let $\{a^{i}_{j}\}$ be the generators of $F_2(\A)$ which are defined in Section \ref{indalg}.
Characters for the reflection equation algebra associated to 
$\cU_\tq(\mathfrak{gl}_N)$ were studied by Donin, Kulish and Mudrov \cite{DKM,DM1,DM2}, 
and completely classified in \cite{Mud}.  
In \cite{KoSt}, it was explained that a character $f$ of the reflection equation algebra extends 
to a character of the braided dual of $\cU_\tq(\mathfrak{gl}_N)$ if, and only if, 
the matrix $(f(a^{i}_{j}))$ is invertible.  Following them (see also \cite{N, OS, DS}), we choose\footnote{In this article $\tq^\sigma$ denotes a generic complex number, not directly related to $\tq$.  We keep the old notation for two reasons: first to emphasize the connection with previous papers \cite{DS,NS,OS}, and second, because in the formal setting we will take $\sigma\in \CC$, and let $\tq:=e^{\hbar}$, and $\tq^\sigma:=e^{\sigma\hbar}$, in order to compute the trigonometric degeneration. We let $\tq^{-\sigma}:=\frac{1}{\tq^{\sigma}}$.} $\tq^\sigma\in\CC$,  
and define an $N\times N$ complex matrix $J^{\sigma}$:
{\small
\begin{equation}{\label{eqn:Jsigma}}
J^{\sigma}=\sum_{1\leq k\leq p}(\tq^{\sigma}-\tq^{-\sigma})E_k^k
-\sum_{p<k<N-p+1}\tq^{-\sigma}E_k^k+\sum_{1\leq k\leq p}E_k^{N-k+1}
+\sum_{1\leq k \leq p}E_{N-k+1}^k.
\end{equation}
} Note that $J^\sigma$ satisfies a Hecke relation $J^\sigma \sim \tq^\sigma$.

\begin{lemma}[See e.g. \cite{DS}, \cite{DNS}, \cite{Mud}]{\label{lem:refl}}
The matrix $J^{\sigma}$ is a right-handed numerical solution of the reflection equation
\begin{equation}\label{eqn:refl}
R_{21}J^{\sigma}_1R_{12}J^{\sigma}_2 = J^{\sigma}_2R_{21}J^{\sigma}_1R_{12},
\end{equation}
where $J^{\sigma}_{1}=J^{\sigma}\otimes\Id$ and
$J^{\sigma}_{2}=\Id \otimes J^{\sigma}$. 
\end{lemma}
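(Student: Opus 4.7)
The plan is a direct computation using the explicit formulas \eqref{eqn:R4} for $R$ and \eqref{eqn:Jsigma} for $J^{\sigma}$. The reflection equation \eqref{eqn:refl} is an identity between operators on $\CC^{N}\ot\CC^{N}$, so it reduces to checking equality of matrix coefficients indexed by quadruples in $\{1,\ldots,N\}^{4}$.

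First, I would exploit sparsity. Partition the index set $\{1,\ldots,N\}$ into three blocks: $T=\{1,\ldots,p\}$ (top), $M=\{p+1,\ldots,N-p\}$ (middle), and $B=\{N-p+1,\ldots,N\}$ (bottom). With respect to this partition, $J^{\sigma}$ has nonzero entries only on the main diagonal and on the $T\leftrightarrow B$ antidiagonal connecting $k$ to $N-k+1$ for $1\le k\le p$; meanwhile $R$, per \eqref{eqn:R4}, is the sum of a diagonal piece and the strictly lower-triangular correction $(\tq-\tq^{-1})\sum_{i>j}E_{i}^{j}\ot E_{j}^{i}$. The vast majority of terms in the expansion of both sides of \eqref{eqn:refl} therefore vanish on matrix-element grounds.

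Second, I would systematically enumerate the surviving cases by the block types of the four external indices and verify each. The nontrivial interactions arise when an antidiagonal entry $E_{k}^{N-k+1}$ or $E_{N-k+1}^{k}$ of $J^{\sigma}$ meets the off-diagonal correction in $R$ (or in $R_{21}$); it is precisely in these cases that the parameter $\tq^{\sigma}$ couples to the Hecke parameter $\tq$, through the top-block diagonal coefficient $(\tq^{\sigma}-\tq^{-\sigma})$. A complementary sanity check is the Hecke relation $J^{\sigma}\sim\tq^{\sigma}$ (noted in the text), which one verifies directly from \eqref{eqn:Jsigma} and which combined with the $\tq \leftrightarrow \tq^{\sigma}$ symmetry constrains the structure of the calculation.

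The main obstacle is purely bookkeeping: while each individual case reduces to an elementary identity among monomials in $\tq^{\pm 1}$ and $\tq^{\pm\sigma}$, keeping track of all index interactions (especially those mediated by the $T\leftrightarrow B$ antidiagonal symmetry) is laborious. Since this lemma is essentially classical in the theory of quantum symmetric pairs and already appears in the same form in \cite{DS,DNS,Mud}, I would simply cite those references for the detailed verification, perhaps illustrating one representative case such as $(i,j,k,l)\in T\times B\times T\times B$ to convey the flavor of the computation.
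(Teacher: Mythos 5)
The paper gives no proof of this lemma; it simply cites \cite{DS}, \cite{DNS}, \cite{Mud}, which is also where your proposal ends up. Your sketch of the direct verification is nevertheless sound and correct in outline: the block partition into $T=\{1,\ldots,p\}$, $M=\{p+1,\ldots,N-p\}$, $B=\{N-p+1,\ldots,N\}$; the observation that $J^\sigma$ is supported on the diagonal (on $T$ and $M$ only) plus the $T\leftrightarrow B$ antidiagonal, while $R$ differs from a diagonal operator only by the strictly lower correction $(\tq-\tq^{-1})\sum_{i>j}E_i^j\ot E_j^i$; and the resulting case analysis on block types of external indices all correctly organize the finitely many nonvanishing matrix coefficients. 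The sanity check via the Hecke relation $J^\sigma\sim\tq^\sigma$ is apt. Since you both defer the bookkeeping to the same references, the proposal takes essentially the same approach as the paper.
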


\begin{corollary} The matrix $(J^\sigma)^{-1}$ is a left-handed numerical solution of the reflection equation.
\end{corollary}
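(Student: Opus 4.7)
My plan is to transport the right-handed reflection equation for $J^{-\sigma}$ into the left-handed reflection equation for $(J^\sigma)^{-1}$ by conjugation with an explicit involution. First, I would establish the matrix identity $(J^\sigma)^{-1} = W J^{-\sigma} W$, where $W := \sum_{k=1}^N E_k^{N-k+1}$ is the order-reversing involution. This reduces to a block-by-block check: on each two-dimensional subspace $\mathrm{span}(e_k, e_{N-k+1})$ for $k\le p$, $J^\sigma$ restricts to $\bigl(\begin{smallmatrix}\tq^\sigma - \tq^{-\sigma} & 1 \\ 1 & 0\end{smallmatrix}\bigr)$, whose inverse $\bigl(\begin{smallmatrix}0 & 1 \\ 1 & \tq^{-\sigma} - \tq^\sigma\end{smallmatrix}\bigr)$ agrees in the same basis with $W J^{-\sigma} W$; on the complementary block (indices $p<k<N-p+1$) both sides act as $-\tq^\sigma$ times the identity.

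Next, I would verify the crossing-type identity $(W\otimes W)\, R_{12}\, (W\otimes W) = R_{21}$ directly from \eqref{eqn:R}: the substitution $i\mapsto N-i+1$ leaves the diagonal parts of $R$ invariant, while the off-diagonal sum $\sum_{i>j} E_i^j\otimes E_j^i$ is converted into $\sum_{i<j} E_i^j\otimes E_j^i$, which is precisely the difference between $R_{12}$ and $R_{21}$.

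By Lemma \ref{lem:refl} applied to the parameter $-\sigma$, the matrix $J^{-\sigma}$ satisfies the right-handed reflection equation; conjugating that equation by the tensor flip $P$ yields the equivalent form
$$R_{12}\, J^{-\sigma}_2\, R_{21}\, J^{-\sigma}_1 \;=\; J^{-\sigma}_1\, R_{12}\, J^{-\sigma}_2\, R_{21}.$$
I would then conjugate this identity by $W_1 W_2 = W\otimes W$. Since $W_i$ commutes with any operator supported on the opposite tensor slot and $W_i^2 = \id$, each factor $J^{-\sigma}_i$ transforms into $(W J^{-\sigma} W)_i = (J^\sigma)^{-1}_i$ by the first step, while the crossing identity exchanges $R_{12}\leftrightarrow R_{21}$. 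The transformed equation is exactly the left-handed reflection equation for $(J^\sigma)^{-1}$.

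The main step is the identification $(J^\sigma)^{-1} = W\, J^{-\sigma}\, W$, which is a short but not entirely transparent calculation and carries the conceptual content of the corollary; once it is in hand, the remainder of the argument is a routine application of the crossing symmetry of the $\mathfrak{gl}_N$ R-matrix.
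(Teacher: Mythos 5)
Your proof is correct but genuinely different in mechanism from the paper's. The paper never introduces an auxiliary matrix; instead it exploits the parameter symmetries $R(\tq)^{-1}=R(\tq^{-1})$ and $J^\sigma(\tq)=J^{-\sigma}(\tq^{-1})$, rewrites the right-handed reflection equation for $J^{-\sigma}(\tq)$ in terms of the variable $\tq^{-1}$, inverts both sides, and applies the flip $\tau_{12}$; the resulting identity is the left-handed reflection equation for $(J^\sigma(\tq^{-1}))^{-1}$, and since $\tq$ was arbitrary the corollary follows. You instead use the \emph{matrix} symmetry: the longest-word permutation $W=\sum_k E_k^{N-k+1}$ satisfies $(W\ot W)R_{12}(W\ot W)=R_{21}$ (a crossing identity for the $\mathfrak{gl}_N$ $R$-matrix) and $W J^{-\sigma} W=(J^\sigma)^{-1}$ (verified block-by-block, as you do), so conjugating the flipped right-handed equation for $J^{-\sigma}$ by $W\ot W$ delivers the left-handed equation for $(J^\sigma)^{-1}$ directly, at a single fixed value of $\tq$. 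Both arguments are essentially the same length; yours has the small advantages of being entirely at fixed parameters (no ``vary $\tq$'' quantification needed) and of making visible the concrete conjugating operator $W$, which is related to but not equal to the classical involution matrix $J'$ of equation \eqref{eqn:J'} (note $W$ fixes the middle block while $J'$ negates it). The paper's argument has the advantage of avoiding the block-by-block verification of $(J^\sigma)^{-1}=WJ^{-\sigma}W$ and relying only on how $R$ and $J^\sigma$ transform under $\tq\mapsto\tq^{-1}$, which are easier to see by inspection. I verified each of your claimed identities ($(J^\sigma)^{-1}=WJ^{-\sigma}W$ on both block types, $(W\ot W)R_{12}(W\ot W)=R_{21}$, and that the final conjugated equation has the form $R_{21}J'_2R_{12}J'_1=J'_1R_{21}J'_2R_{12}$ with $J'=(J^\sigma)^{-1}$, matching the left-handed equation \eqref{eqn:refleqn2}); they are all correct.
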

\begin{proof} By the lemma, $J^{\sigma}$ is a solution of the right handed reflection equation for all $\tq^\sigma\in\CC$.  Let us write $R=R(\tq)$ and $J^{\sigma}=J^\sigma(\tq)$ to emphasize the dependence on $\tq$.  By inspecting the $R$-matrix for $V\ot V$, we see that $R(\tq)^{-1}=R({\tq^{-1}}).$  Similarly $J^\sigma(\tq)=J^{-\sigma}(\tq^{-1})$.  Thus, we compute that the left handed reflection equation for $J^{-\sigma}$ at $\tq$ is equivalent to the left-handed equation for $(J^{\sigma})^{-1}$ at $\tq^{-1}$:
\footnotesize\begin{align*}
R_{21}(\tq)J^{-\sigma}_1(\tq)R_{12}(\tq)J_2^{-\sigma}(\tq)&=J_2^{-\sigma}(\tq)R_{21}(\tq)J_1^{-\sigma}(\tq)R_{12}(\tq)\\
\Leftrightarrow R_{21}(\tq^{-1})^{-1}J_1^{\sigma}(\tq^{-1})R_{12}(\tq^{-1})^{-1}J_2^{\sigma}(\tq^{-1})&=J_2^{\sigma}(\tq^{-1})R_{21}(\tq^{-1})^{-1}J_1^{\sigma}(\tq^{-1})R_{12}(\tq^{-1})^{-1},\\
\Leftrightarrow J_2^{\sigma}(\tq^{-1})^{-1}R_{12}(\tq^{-1})J_1^{\sigma}(\tq^{-1})^{-1}R_{21}(\tq^{-1})&= R_{12}(\tq^{-1})J_1^{\sigma}(\tq^{-1})^{-1}R_{21}(\tq^{-1})J_2^{\sigma}(\tq^{-1})^{-1},\\
\Leftrightarrow J_1^{\sigma}(\tq^{-1})^{-1}R_{21}(\tq^{-1})J_2^{\sigma}(\tq^{-1})^{-1}R_{12}(\tq^{-1})&= R_{21}(\tq^{-1})J_2^{\sigma}(\tq^{-1})^{-1}R_{12}(\tq^{-1})J_1^{\sigma}(\tq^{-1})^{-1}.
\end{align*}\normalsize
The first equivalence follows from the preceding paragraph.  The second is by inverting both sides of the equation, and the third is by applying the flip $\tau_{12}$.  Since the right handed reflection equation is established for $J^{\sigma}(\tq)$ at all parameters $\tq$ and $\tq^\sigma$, it follows that the left hand reflection equation holds for $J^{\sigma}(\tq)$ for all $\tq$ and $\tq^\sigma$ as well.
\end{proof}

Thus we can define characters $f_\sigma:F_2(\A)\to \CC,  f_\sigma(a_i^j):=J^\sigma_{ij}$, 
and $g_\rho:F_2(\A)\to\CC, g_\rho(\tilde{a}_i^j)=((J^{\rho})^{-1})_{ij}.$
Note that the corresponding matrices $J_V:=\sum f(a_j^i)E_i^j$ and $J'_V:=\sum g(\tilde{a}_j^i)E_i^j$ for the vector representation $V=\CC^N$ will be $J^\sigma$ and $(J^{\rho})^{-1}$ themselves, since $J^\sigma$ and $(J^{\rho})^{-1}$ are symmetric.  
Following section \ref{cisachar}, we have coideal subalgebras $\B_\sigma:=\B_{f_\sigma}$ and 
$\B'_{\rho}:=\B'_{g_\rho}$ associated to any $V\in \cC$.
\footnote{It is also possible to scale the matrices $J^{\sigma}$ by an arbitrary nonzero complex number.  Of course, doing so will yield the same algebra.}

In Letzter's framework \cite{L1,L2}, it is important that 
the coideal subalgebras $\B_\sigma$ are all isomorphic as abstract algebras 
(similarly for the $\B'_\rho$).  
This property was also used in \cite{OS} in the case $p=q$, 
where the authors constructed a single comodule algebra 
and a family of embeddings into the quantum group.  
In our case, the isomorphisms between the $\B_\sigma$ 
take an especially simple form in the following propositon:
\begin{proposition}\label{Bsigmaisom}
Let $\tq, \tq^{\sigma_1}, \tq^{\sigma_2} \in \CC$ be generic, and let 
$\phi:\B_{\sigma_1}\to \B_{\sigma_2}$ be defined on generators by 
$\phi(c_{il}^{(1)})=c_{il}^{(2)}$, 
where $c_{il}^{(k)}$ are the generators \eqref{eqn:cij} for $\B_{\sigma_k}$.  
Then $\phi$ is an isomorphism of algebras.
\end{proposition}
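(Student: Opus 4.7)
The core observation is that the matrix $c^{(k)} = L^+ J^{\sigma_k} S(L^-)$ satisfies a universal reflection equation,
\[
R_{21} c^{(k)}_1 R_{12} c^{(k)}_2 \;=\; c^{(k)}_2 R_{21} c^{(k)}_1 R_{12},
\]
which I would derive by combining the RTT relations \eqref{l-rel1} for $L^\pm$ with the reflection equation for $J^{\sigma_k}$ (Lemma \ref{lem:refl}). The crucial point is that in this derivation $J^{\sigma_k}$ gets absorbed into the matrix $c^{(k)}$, so that the resulting quadratic relation on the $c^{(k)}_{il}$ contains no explicit $\sigma$-dependence whatsoever.

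Next I would argue that these reflection equation relations give a complete presentation of $\B_{\sigma_k}$ as an abstract algebra, with no $\sigma$-dependent relations slipping in. One route is to invoke Letzter's structure theorem for quantum symmetric pair coideal subalgebras \cite{L1, L2}, which guarantees that the $\B_\sigma$ form a flat family parametrized by $\sigma$ admitting a common PBW-type basis indexed in a $\sigma$-independent fashion. Alternatively one can construct a PBW basis for $\B_\sigma$ directly out of ordered monomials in the $c_{il}$, with straightening rules governed by the universal reflection equation above.

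Given that the defining relations among the $c_{il}$ are $\sigma$-independent, the map $\phi$ sending $c^{(1)}_{il} \mapsto c^{(2)}_{il}$ visibly preserves them, and hence extends to a well-defined algebra homomorphism. Surjectivity is immediate since $\phi$ is defined on generators. Injectivity follows from a dimension count: since both algebras share the same PBW basis, a surjection between them is automatically an isomorphism.

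The main obstacle is the middle paragraph — ruling out additional $\sigma$-dependent relations. A potential pitfall is the Hecke relation $J^\sigma \sim \tq^\sigma$, which plainly depends on $\sigma$; however, this relation lives purely in $\End(V)$ and gets used only to establish the reflection equation for $c^{(k)}$, not to impose any further relation among the $c_{il}$ beyond what is already captured. Verifying this carefully — perhaps by examining the higher-order consequences of the Hecke relation on tensor products $V \otimes V$ — is the step I expect to demand the most care.
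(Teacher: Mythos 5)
You have correctly identified the skeleton of the argument: the matrix $c^{(k)}$ satisfies the $\sigma$-independent reflection equation $R_{21}c_1R_{12}c_2 = c_2R_{21}c_1R_{12}$, and if these quadratic relations form a complete presentation, the isomorphism follows at once. This matches the paper's first half. The paper also uses the block structure of $J^\sigma$ (skew-upper triangular, symmetric) together with the triangularity of $L^\pm$ to pin down exactly $p^2 + q^2$ independent generators among the $c_{il}$, and concludes that ordered monomials in these generators span $\B_\sigma$ --- a spanning statement you also assert via ``straightening rules governed by the universal reflection equation.''

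The genuine gap is in the completeness step, which you yourself flag as the main obstacle but do not close. Neither of your two suggested routes is actually carried out: citing Letzter's structure theorem would require carefully matching the Noumi-type coideals $\B_\sigma$ built here from characters of the braided dual with Letzter's presentation-based coideal subalgebras, and that identification is nontrivial and not done; the ``direct PBW construction'' is precisely what needs to be proved, so invoking it for the dimension count is circular. Your closing remark about ``examining higher-order consequences of the Hecke relation on $V\otimes V$'' is also not a mechanism for establishing linear independence of the ordered monomials. The paper closes this gap differently: it forward-references the explicit quasi-classical limit computation of Section 10, which shows that as $\tq \to 1$ (over $\CC[[\hbar]]$) the elements $c_{il}$ degenerate to the standard generators of $\cU(\kk')$, which does have a PBW basis of ordered monomials; flatness of $\cU_\tq(\g)$ as a deformation of $\cU(\g)$ for $\tq$ not a root of unity then forces the reflection-equation relations to be the complete set of relations on $\B_\sigma$. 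This degeneration-plus-flatness argument is the concrete ingredient missing from your proposal, and it is the source of the $\sigma$-independence claim rather than a consequence of it.
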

\begin{proof}
Using that $L^+$ (resp. $L^-$) is upper (resp. lower) triangular, 
that $S(l^-_{ii})=l^+_{ii}$, and that $J^\sigma$ is skew-upper triangular and symmetric, 
we can see by inspection that the matrix of generators $(c_{il})$ has the form:
$$c_{il} = \left(\begin{array}{lll} * & * & X\\ * & * & 0 \\ Y & 0 & 0 \end{array} \right )_{il},$$
\noindent where the blocks are of size $(p,q-p,p)\times (p,q-p,p)$ (the same as in $J^\sigma)$.  
Here, the $*$'s are some nonzero expressions, $X$ and $Y$ are skew upper triangular, 
and we have $X_{i,p-i}=Y_{p-i,i}$.  
This means that each $\cI_\sigma$ is really generated by the $q^2$ entries in the $*'$ed regions, 
plus the $p^2$ entries in $X$ and $Y$, 
counting the diagonal only once.  
This gives a system of $p^2+q^2$ generators, 
which are subject to (at least) the relations of the reflection equation algebra:
\begin{equation}\label{eqn:refleqnforc}
R_{21} c_1 R_{12} c_2 =  c_2 R_{21} c_1 R_{12}.
\end{equation}

It follows that the algebras $\B_\sigma$ are spanned by ordered monomials in the $c_{il}$, though \emph{a priori} we may expect more relations.

It turns out that there are no other relations, which we can see as follows.  
It is shown in Section \ref{Bdegsec} that the quasi-classical limits of the elements $c_{il}$ 
are the generators of the subalgebra $\cU(\mathfrak{k}) =\cU(\kk')\subset \cU(\mathfrak{gl}_N)$, which itself affords a PBW basis of ordered monomials in its generators.  
It now follows from the fact that $\cU_\tq(\g)$ is a flat deformation of $\cU(\g)$, 
for $\tq$ not a root of unity, that the relations \eqref{eqn:refleqnforc} 
provide all the relations on $\B_\sigma$.  
In particular, the relations don't depend at all on $\tq^\sigma$, so the map $\phi$ is an isomorphism.
\end{proof}

Obviously the map $\chi_\sigma: c_{il}\mapsto J_{il}^{\sigma}$ is a character of $\B_\sigma$ 
($\chi_\sigma$ is the restriction of $\epsilon$).  
In fact, we see by the previous proposition that each $\B_\sigma$ 
has a two parameter family of characters:
\begin{equation}\label{eqn:chi}
\chi^\eta_{\tau}(l^{+}_{ij}J^\sigma_{jk}S(l_{kl}^{-})):= \tq^\eta J^{\tau}_{il}.
\end{equation}
Likewise, each $\B'_\rho$ has a two parameter family of characters:
\begin{equation}\label{eqn:chi2}
\lambda^\omega_\nu(S(l^-_{ij})(J^\rho)^{-1}_{jk}l^+_{kl}):=\tq^\omega (J^{\nu})^{-1}_{il}.
\end{equation}
In the next two sections, we will use these to construct twisted invariants and 
twisted quantum $D$-modules.

\subsection{$\tq$-Harish Chandra modules for $(\cU_\tq(\mathfrak{gl}_N),\B_\sigma)$}
In the theory of real and p-adic groups, an important role is played by the so-called Harish-Chandra modules associated to a symmetric pair $(G,K)$.  The following definition captures the relevant algebraic properties in the $\tq$-deformed setting, and was proposed in \cite{L3}, Definition 3.1.
\begin{definition}  The category of $\tq$-Harish-Chandra modules for $(\cU_\tq(\mathfrak{gl}_N),\B_\sigma)$ is the full abelian subcategory of $\cU_\tq(\mathfrak{gl}_N)$-modules $M$ such that $\B_\sigma$ acts semi-simply on $M$.
\end{definition}

\begin{definition} The category of $\tq$-Harish-Chandra $D$-modules for \linebreak$(\cU_\tq(\mathfrak{gl}_N),\B_\sigma,\B'_\rho)$ is the full abelian subcategory of non-degenerate $D_{\cU_\tq(\mathfrak{gl}_N)}$-modules $M$ such that $\partial_2(\B'_\rho\ot \B_\sigma)$ acts semi-simply on $M$.
\end{definition}

In either case, we have the ``Harish-Chandra part'' functor which sends a module to sum of all its $\tq$-Harish Chandra submodules; the result is only a $\U'\B_\sigma$-module (see the discussion in \cite{L3} following Definition 3.1).  In the case of non-degenerate $D_\U$-modules, the Harish-Chandra part is only a $\U'\B'_\rho\ot \U'\B_\sigma$ module, which is preserved by the $\A$ action.  This is enough for our purposes.

\section{Representations of the affine Hecke algebras of type $C^{\vee}C_{n}$.}\label{AHAsec}
Let $V=\mathbb{C}^{N}$ be the vector representation for $\cU_{\tq}(\g)=\cU_{\tq}(\mathfrak{gl}_{N})$. 
Let $\chi^{\eta}_{\tau}$ be the character of $\B_{\sigma}$ defined in \eqref{eqn:chi}, and let $\trivial^{\eta}_\tau$ denote the associated one-dimensional representation.  For any $\B_\sigma$-module $W$, we denote by $W^{l.f.}$ the locally finite part of $W$,  i.e. the sum of all finite dimensional $\B_\sigma$-submodules of $W$. For any $\cU_{\tq}(\g)$-module $M$, define a vector space
\begin{eqnarray*}
F^{\sigma,\eta,\tau}_{n}(M)=(M\otimes V^{\otimes n})^{\B_\sigma,\chi^\eta_\tau}:=\Hom_{\B_\sigma}(\trivial^\eta_\tau,M\otimes V^{\otimes n}).
\end{eqnarray*}
Above, the $\B_\sigma$ action on the tensor product is as in Section \ref{dabg}.  The main result of this section is the following theorem.
\begin{theorem}\label{maintheoremAHA}
$F^{\sigma,\eta,\tau}_{n}$ defines an functor from the category of $\cU_{\tq}(\g)$-modules
to the category of representations of the affine Hecke algebra 
$\cH_{n}(t, t_{0},t_{n})$
with parameters:
\begin{align*}
t=\tq, \quad t_{n}=\tq^{\sigma},\quad t_{0}=\tq^{(p-q-\tau)}.
\end{align*}
Moreover $F^{\sigma,\eta,\tau}_n$ factors through the Harish-Chandra part functor, and is exact on the category of $\tq$-Harish-Chandra modules.
\end{theorem}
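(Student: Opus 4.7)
The plan is to specialize the construction of Theorem~\ref{easyprop}, with $\U=\cU_\tq(\mathfrak{gl}_N)$, $V=\CC^N$ the vector representation, and $f=f_\sigma$, and then restrict to $\chi^\eta_\tau$-invariants.  First I would verify that every $T_i$ preserves the subspace $F^{\sigma,\eta,\tau}_n(M)=\Hom_{\B_\sigma}(\trivial^\eta_\tau,M\otimes V^{\otimes n})$.  The braidings among the $V_i$'s generating $T_1,\dots,T_{n-1}$ and appearing throughout $T_0$ are $\U$-linear, hence $\B_\sigma$-linear; $T_n=J^\sigma$ is $\B_\sigma$-linear by Proposition~\ref{JREprop}; and the braided $J^\sigma$ occurring inside $T_0$ is $\B_\sigma$-linear for the same reason, so the restriction of the $\hB_n$-action to $F^{\sigma,\eta,\tau}_n(M)$ is well-defined.

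Next I would check the three Hecke relations.  The relations $T_i\sim \tq$ for $1\le i\le n-1$ follow immediately from the eigenvalue decomposition of the $R$-matrix~\eqref{eqn:R} on $V\otimes V$, and the relation $T_n\sim \tq^\sigma$ is the content of Lemma~\ref{lem:refl} together with the Hecke relation noted right after~\eqref{eqn:Jsigma}.  The relation $T_0\sim \tq^{p-q-\tau}$ is the main technical step.  Via Lemma~\ref{lem:invident}, $T_0^{-1}$ acts on the $\chi^\eta_\tau$-invariants as a conjugate of the explicit operator
\[
\tilde J \;=\; \sum_{i,j,k,l}\chi^\eta_\tau(c_{jk})\, E_i^l\, \rho_V\!\bigl(l^-_{kl}\, S(l^+_{ij})\bigr)\;\in\; \End_\CC(V).
\]
Substituting $\chi^\eta_\tau(c_{jk})=\tq^\eta J^\tau_{jk}$ together with the explicit matrix coefficients of $R^{\pm 1}$ and the formulas for $\rho_V(l^\pm_{ij})$ from Section~\ref{sec:QG}, I would contract the $L$-operator entries against the $p$-block and the $q$-block of $J^\tau$ in~\eqref{eqn:Jsigma} separately.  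A partial trace over the $L^- S(L^+)$ pair produces the shift $\tq^{p-q}$, with opposite signs from the two blocks of $J^\tau$, and a direct computation then shows that $\tilde J$ (restricted to the image of the $\chi^\eta_\tau$-invariant subspace under the conjugating braiding) has eigenvalues $\tq^{q-p+\tau}$ and $-\tq^{-(q-p+\tau)}$; hence $T_0\sim \tq^{p-q-\tau}$.

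Finally, I would handle the categorical claims.  Because $\trivial^\eta_\tau$ is a simple finite-dimensional $\B_\sigma$-module, any $\B_\sigma$-homomorphism from it factors through the maximal semi-simple $\B_\sigma$-submodule of the target, which gives the factorization of $F^{\sigma,\eta,\tau}_n$ through the Harish-Chandra part functor.  On the $\tq$-Harish-Chandra subcategory, $V^{\otimes n}$ is finite-dimensional over $\B_\sigma$, tensoring with it preserves $\B_\sigma$-semisimplicity via the coideal structure $\Delta(\B_\sigma)\subset \U\otimes \B_\sigma$, and $\Hom_{\B_\sigma}(\trivial^\eta_\tau,-)$ is exact on semi-simple targets.

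The hardest part is the eigenvalue analysis for $T_0$: the interplay of the character parameters $\eta,\tau$, the subalgebra parameter $\sigma$, and the discrete data $p,q$ is delicate, and in particular one must verify that the $\eta$-dependence cancels out of the final Hecke parameter and that the shift $\tq^{p-q}$ emerges with the correct sign from the signature of the diagonal part of $J^\tau$.
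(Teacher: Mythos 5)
Your plan mirrors the paper's own proof almost exactly: specialize Theorem~\ref{easyprop} to $\U=\cU_\tq(\mathfrak{gl}_N)$, $V=\CC^N$, $f=f_\sigma$; restrict to $\chi^\eta_\tau$-invariants; note $T_i\sim\tq$ from the $R$-matrix and $T_n\sim\tq^\sigma$ from the Hecke relation for $J^\sigma$; and reduce the Hecke relation for $T_0$ to an explicit matrix computation via Lemma~\ref{lem:invident}. The treatment of the Harish--Chandra and exactness claims is also the same in substance.

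One substantive slip worth flagging: your displayed formula for $\tilde J$ drops an $S^2$. Lemma~\ref{lem:invident} gives
$\tilde J=\sum E_i^l\,\rho_V\bigl(S(l^+_{ij}\chi(c_{jk})S(l^-_{kl}))\bigr)
=\sum \chi(c_{jk})\,E_i^l\,\rho_V\bigl(S^2(l^-_{kl})\,S(l^+_{ij})\bigr),$
whereas you wrote $\rho_V(l^-_{kl}\,S(l^+_{ij}))$. The $S^2=\operatorname{ad}u$ term is not a cosmetic normalization: it is precisely what inserts the Drinfeld-element factors $\rho_V(u)=\sum_i \tq^{2i-2}E_i^i$, and their $i$-dependence is what distinguishes the $p$-block from the $q$-block and produces the $\tq^{\pm(p-q)}$ shift in the eigenvalues. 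Your "partial trace over the $L^-S(L^+)$ pair'' heuristic is not obviously equivalent, so before committing to the eigenvalue claim you should restore the $S^2$ and run the diagonal computation as the paper does. You also omit the overall rescaling $\alpha^{-1}=\tq^{N-\eta}$ coming from the normalization $T_0=\alpha P_1^{-1}(\sigma_{V_1,M}\sigma_{M,V_1})^{-1}$, though you correctly anticipate that the $\tq^{\eta}$ from $\chi^\eta_\tau$ must cancel against it. With these two details repaired, your argument matches the paper's.
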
 

The construction is a specialization of Section \ref{abg}, except that we rescale the operators to have eigenvalues of the form $\lambda, -\lambda^{-1}$.  It is clear that the relations we checked in Section \ref{abg} are unchanged by rescaling; thus, the only new proofs in this section will be checking the Hecke relations.

For $i=1,\ldots n-1$, we let $T_i=\sigma_{V_i,V_{i+1}}$, and we let $T_n=J^\sigma_{V_n}$.  We let $T_0=\alpha P_1^{-1}(\sigma_{V_1,M}\circ\sigma_{M,V_1})^{-1}$, where $\alpha=\tq^{-N+\eta}$.  It follows immediately that $T_i \sim \tq,$ and  $T_n\sim \tq^{\sigma}.$

\begin{proposition} $T_0\sim \tq^{p-q-\tau}$. \end{proposition}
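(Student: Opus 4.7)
The plan is to use Lemma~\ref{lem:invident} to reduce the Hecke identity for $T_0$ to a spectral statement about an auxiliary operator $\tilde J$ acting on $V_1$, and then to identify the two eigenvalues of $\tilde J$ by direct computation.

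First I would apply Lemma~\ref{lem:invident} with $\chi = \chi^\eta_\tau$, combined with the rescaling $\alpha = \tq^{-N+\eta}$, to obtain, on the $\chi^\eta_\tau$-invariants,
\[
T_0^{-1} \;=\; \alpha^{-1}\,\sigma_{V,M}\,\tilde J_{V_1}\,\sigma_{V,M}^{-1},
\]
with $\tilde J = \sum E_i^l\,\rho_V\bigl(S(l^+_{ij}\,\chi^\eta_\tau(c_{jk})\,S(l^-_{kl}))\bigr)$ and $\chi^\eta_\tau(c_{jk}) = \tq^\eta J^\tau_{jk}$.  Since conjugation preserves spectrum and multiplication by $\alpha^{-1}$ simply rescales it, the Hecke relation $T_0 \sim \tq^{p-q-\tau}$ is equivalent to showing that $\tilde J$ has exactly the two eigenvalues $\alpha\tq^{-(p-q-\tau)}$ and $-\alpha\tq^{p-q-\tau}$.

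Next I would compute $\tilde J$ explicitly.  Pulling the antipode inside using $S(ab) = S(b)S(a)$ rewrites
\[
\tilde J \;=\; \tq^\eta\sum_{i,j,k,l} E_i^l\,J^\tau_{jk}\,\rho_V\bigl(S^2(l^-_{kl})\,S(l^+_{ij})\bigr),
\]
where the Drinfeld--Jimbo balancing twist $S^2(l^-_{kl}) = \tq^{(2\rho,\,\varepsilon_k - \varepsilon_l)}\,l^-_{kl}$ produces the key powers of $\tq$.  The triangularity $l^+_{ij}=0$ for $i>j$ and $l^-_{kl}=0$ for $k<l$, together with the block-anti-diagonal-plus-middle-diagonal form \eqref{eqn:Jsigma} of $J^\tau$, restricts the sum to a small set of contributing index tuples.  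I would then verify by direct evaluation on basis eigenvectors that $\tilde J$ preserves the eigenspace decomposition of $J^\tau$, acting as the scalar $\alpha\tq^{-(p-q-\tau)}$ on the $p$-dimensional $\tq^\tau$-eigenspace (spanned by $e_k + \tq^{-\tau}e_{N-k+1}$, $k\leq p$) and as $-\alpha\tq^{p-q-\tau}$ on the $q$-dimensional $-\tq^{-\tau}$-eigenspace (spanned by $e_k - \tq^\tau e_{N-k+1}$ for $k\leq p$ together with $e_k$ for $p<k<N-p+1$); the asymmetric shift $\pm(p-q)$ reflects the different numbers of paired versus middle-block contributions to the $S^2$-twisted sum.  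The case $N=2,\,p=q=1$ serves as a useful consistency check: $\tilde J/\tq^\eta$ is a concrete $2\times 2$ matrix whose trace $\tq^{-2}(\tq^\tau-\tq^{-\tau})$ and determinant $-\tq^{-4}$ match the predicted eigenvalues $\tq^{-2+\tau}$ and $-\tq^{-2-\tau}$.

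The main obstacle is the explicit computation for general $N$: all ingredients ($R$-matrix entries \eqref{eqn:R4}, the antipodes $S$ and $S^2$, and the block structure of $J^\tau$) are given by closed formulas, but correctly tracking how $K^{2\rho}$ interacts with the paired and middle blocks of $J^\tau$ so as to produce the exponents $\pm(p-q-\tau)$, and verifying that all off-eigenspace contributions cancel, requires careful bookkeeping.
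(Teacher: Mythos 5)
Your proposal takes essentially the same route as the paper: use Lemma~\ref{lem:invident} to reduce $T_0^{-1}$ to the auxiliary operator $\alpha^{-1}\tilde J$, then compute $\tilde J$ explicitly using $S^2$ (you phrase it via the balancing element $K^{2\rho}$, the paper via the Drinfeld element $u$ with $\rho_V(u)=\sum\tq^{2i-2}E_i^i$ — these are the same up to a central scalar), then read off the two eigenvalues. Your final eigenvalues and the $N=2$ sanity check are correct.

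However, there is a concrete error in your proposed verification step. You claim $\tilde J$ ``preserves the eigenspace decomposition of $J^\tau$,'' with eigenvectors $e_k + \tq^{-\tau}e_{N-k+1}$ and $e_k - \tq^{\tau}e_{N-k+1}$ for $k\le p$. This is false. The paper's explicit computation gives, for $\alpha^{-1}\tilde J$, off-anti-diagonal entries $\tq^{-N+2k-1}$ and $\tq^{N-2k+1}$ (in the $(k,N+1-k)$ block), not the constant $1$'s appearing in $J^\tau$. Consequently the eigenvector of $\alpha^{-1}\tilde J$ on the $(e_k, e_{N+1-k})$ block for the eigenvalue $\tq^{q-p+\tau}$ is $e_k + \tq^{\,N-2k+1+p-q-\tau}e_{N+1-k}$, with a $k$-dependent coefficient, not $e_k+\tq^{-\tau}e_{N+1-k}$; so the eigenspaces of $\tilde J$ and $J^\tau$ genuinely differ. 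The two eigenvalues still come out to $\tq^{q-p+\tau}$ and $-\tq^{p-q-\tau}$ (the characteristic polynomial of each $2\times2$ block is $\lambda^2 - (\tq^{q-p+\tau}-\tq^{p-q-\tau})\lambda - 1$, and the cross terms cancel because $\tq^{-N+2k-1}\cdot\tq^{N-2k+1}=1$), so the proposition is unaffected — but the specific verification you describe, if carried out, would not go through as stated and would need to be replaced by the characteristic-polynomial calculation.
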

\begin{proof}
By Lemma \ref{lem:invident}, on the space of $(\I_\sigma,\chi_\tau)$-invariants, $T_0^{-1}$ has the same minimal polynomial as $\alpha^{-1}\tilde{J} =\tq^{N-\eta}\sum E_i^l\rho(S(l^+_{ij}\chi^\eta_\tau(c_{jk})S(l^-_{kl}))).$  Applying the definition of $\chi^\eta_\tau,$ we have:
\begin{align*}
\alpha^{-1}\tilde{J} &= \tq^{N}\sum E_i^l\rho(S(l^+_{ij}J^{\tau}_{jk}S(l^-_{kl})))\\
&= \tq^N\sum E_i^l\rho(S^2(l^-_{kl})J^{\tau}_{jk}S(l^+_{ij}))\\
&= \tq^N\sum E_i^l\rho(u l^-_{kl}u^{-1}J^{\tau}_{jk}S(l^+_{ij})),
\end{align*}
\noindent where $u$ is the Drinfeld element such that $S^2(x)=uxu^{-1}$ for all $x\in \U$.  For the vector representation we have the well-known formula\footnote{up to an immaterial scalar, depending on the normalization of $u$.}: $$\rho_V(u)=\sum_{i=1}^N \tq^{2i-2}E_i^i.$$
By equations \eqref{eqn:R} and \eqref{eqn:R4} and direct computation, we have 
\begin{eqnarray*}
\alpha^{-1}\tilde{J}&=&\sum_{i=1}^{p}(\tq^{q-p+\tau}-\tq^{p-q-\tau})E_i^i
-\sum_{i=p+1}^{N-p}\tq^{p-q-\tau}E_i^i\\
&&\quad
+\sum_{i=1}^{p}\tq^{-N+2i-1}E_i^{N+1-i}
+\sum_{i=1}^{p}\tq^{N-2i+1}E_{N+1-i}^i,
\end{eqnarray*} 
which is semisimple, with two eigenvalues: 
$\lambda_{1}=\tq^{q-p+\tau}$ and $\lambda_{2}=-\tq^{p-q-\tau}$.

The second part of the theorem follows easily because tensoring is an exact functor, as is $\Hom(\trivial, \bullet)$, when restricted to the category of $\tq$-Harish-Chandra-modules.
\end{proof}

\section{Representations of the double affine Hecke algebras of type $C^{\vee}C_n$}

Let $V=\CC^N$ denote the vector representation for $\U=\cU_\tq(\mathfrak{gl}_N)$.  
Let $\chi^\eta_\tau$ and $\lambda^\omega_\nu$ be the characters of $\B_\sigma$ and $\B'_\rho$, 
respectively, defined in equations \eqref{eqn:chi} and \eqref{eqn:chi2}.  We denote the corresponding one dimensional representations 
$\trivial^{\eta}_{\tau}:=\trivial_{\chi^\eta_\tau}$ and 
$\trivial^\omega_\nu:=\trivial_{\lambda^\omega_\nu}$.  
In this section we prove that a certain rescaling of the action defined in Section \ref{dabg} 
induces an action of the double affine Hecke algebra of type $C^\vee C_n$.  
Let $M$ be a non-degenerate $D_\U$-module,
and let
$$F^{\sigma,\eta,\tau}_{n,\rho,\omega,\nu}(M) 
:= \Hom_{\B'_\rho\ot \B_\sigma}(\trivial^\omega_\nu\bt\trivial^\eta_\tau,M\ot_2(\trivial\bt V_1)\ot_2\cdots \ot_2(\trivial\bt V_n)).$$
\begin{theorem}\label{maintheoremDAHA}
$F^{\sigma,\eta,\tau}_{n,\rho,\omega,\nu}$ defines a functor from the category of non-degenerate $D_\U$-modules
to the category of representations of the double affine Hecke algebra 
$\cH_{n}(v,t, t_{0},t_{n},u_0,u_n)$
with parameters:
\begin{align*}
t=\tq, \quad t_{n}=\tq^{\sigma},\quad t_{0}=\tq^{(p-q-\tau)},\\
u_0=\tq^{\nu}, \quad u_n=\tq^{-\rho}, \quad v=\tq^{\eta -N-\omega}.
\end{align*}
Moreover $F^{\sigma,\eta,\tau}_{n,\rho,\omega,\nu}$ factors through the $\tq$-Harish-Chandra part of $M$, and is an exact functor on the category $\tq$-Harish-Chandra $D_{\U}$-modules.
\end{theorem}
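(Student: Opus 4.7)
The action of $\dB_n$ on $F^{\sigma,\eta,\tau}_{n,\rho,\omega,\nu}(M)$ is already provided by Theorem \ref{maintheorem}, so the core task is to verify that each generator satisfies the appropriate Hecke relation. The relations $T_i\sim \tq$ for $1\le i\le n-1$, $T_n\sim \tq^\sigma$, and $T_0\sim \tq^{p-q-\tau}$ are identical to those proved in Theorem \ref{maintheoremAHA}: these operators act only on the $V_1\ot\cdots\ot V_n$ tensor factor, and their minimal polynomials on the space of $(\B_\sigma,\chi^\eta_\tau)$-invariants do not depend on whether we further restrict to $(\B'_\rho,\lambda^\omega_\nu)$-invariants. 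Thus only two genuinely new relations remain: $K_0\sim \tq^{-\rho}$ and $(vK_0P_1T_0)^{-1}\sim \tq^\nu$.

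For $K_0\sim\tq^{-\rho}$, the plan is to establish a companion to Lemma \ref{lem:invident}. The operator $K_0$, defined in equation \eqref{K0def}, acts only on the $M\ot V_1$ factor. Using the faithfulness of $\A$ (Remark \ref{faithfulA}) we may reduce to $M=\A$, where $K_0$ takes the simplified form of Figure \ref{K0forA}. On the space of $(\B'_\rho,\lambda^\omega_\nu)$-invariants, the generators $c'_{il}=S(l^-_{ij})(J^\rho)^{-1}_{jk}l^+_{kl}$ of $\B'_\rho$ can be absorbed through the dual identity $(x\ot 1)=(1\ot S^{-1}(x_{(2)}))(x_{(1)}\ot x_{(2)})$ applied via the right coideal property, producing the scalars $\lambda^\omega_\nu(c'_{il})$. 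The upshot is that $K_0$ is conjugate on the invariant subspace to a numerical operator on $V_1$ whose Hecke polynomial is that of $(J^\rho)^{-1}$; by Lemma \ref{lem:refl} and its corollary, $(J^\rho)^{-1}\sim \tq^{-\rho}$, as desired.

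The main obstacle is the combined relation $(vK_0P_1T_0)^{-1}\sim\tq^\nu$, which intertwines $T_0$, $P_1=T_1\cdots T_{n-1}T_nT_{n-1}\cdots T_1$, and $K_0$. The strategy is to reduce $K_0P_1T_0$ to a single numerical operator on the doubly invariant subspace. Substituting the simplified form of $T_0^{-1}$ from Lemma \ref{lem:invident} and the analogous form of $K_0$ from the previous paragraph, and then manipulating the resulting $J$-decorated tangle diagram using QYBE together with both the right- and left-handed reflection equations (cf.\ Figure \ref{JReflEqnfig}), one should obtain, up to conjugation by braidings, an operator on $V_1$ proportional to $(J^\nu)^{-1}$. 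The specific scalar $v=\tq^{\eta-N-\omega}$ is forced by tracking three contributions: the Drinfeld element $u$ acting on the vector representation contributes $\tq^{-N}$ (as in the proof of Theorem \ref{maintheoremAHA}), while the characters $\chi^\eta_\tau$ on $\B_\sigma$ and $\lambda^\omega_\nu$ on $\B'_\rho$ contribute $\tq^\eta$ and $\tq^{-\omega}$, respectively. With $v$ chosen to cancel precisely these factors, the Hecke relation $(J^\nu)^{-1}\sim\tq^{-\nu}$ translates, after inversion, into the required $(vK_0P_1T_0)^{-1}\sim\tq^\nu$.

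For the final assertions, $F^{\sigma,\eta,\tau}_{n,\rho,\omega,\nu}$ factors through the $\tq$-Harish-Chandra part of $M$ essentially by definition: the $\Hom_{\B'_\rho\ot\B_\sigma}(\trivial^\omega_\nu\bt\trivial^\eta_\tau,-)$ functor picks out an isotypic component of the $\B'_\rho\ot\B_\sigma$-action, and is therefore unaffected by (and detects nothing outside of) the semisimple part of that action. On the category of $\tq$-Harish-Chandra $D_\U$-modules, both $(-)\ot_2\bigl(\trivial\bt(V_1\ot\cdots\ot V_n)\bigr)$ and $\Hom_{\B'_\rho\ot\B_\sigma}(\trivial^\omega_\nu\bt\trivial^\eta_\tau,-)$ are exact (the latter because the relevant modules are semisimple over $\partial_2(\B'_\rho\ot\B_\sigma)$), so the composite is exact as well. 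The most delicate step throughout is the tangle-diagrammatic manipulation in the third paragraph; all other ingredients are either immediate from Theorem \ref{maintheorem} or direct analogues of the corresponding AHA computations in Section \ref{AHAsec}.
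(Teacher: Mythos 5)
Your overall architecture is right: reduce to the faithful representation $M=\A$, observe that the relations for $T_0,\ldots,T_n$ are inherited from Theorem~\ref{maintheoremAHA}, and verify the two new Hecke relations $K_0\sim\tq^{-\rho}$ and $(vK_0P_1T_0)^{-1}\sim\tq^\nu$. But your argument for $K_0\sim\tq^{-\rho}$ is not correct. You propose to work on the $(\B'_\rho,\lambda^\omega_\nu)$-invariant subspace, absorbing the generators $c'_{il}$ of $\B'_\rho$ as scalars $\lambda^\omega_\nu(c'_{il})$, and then conclude that the resulting numerical operator has the Hecke polynomial of $(J^\rho)^{-1}$. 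Two things go wrong. First, $K_0$ \emph{by itself} does not contain the $c'_{il}$: in the form of Figure~\ref{K0forA}, the matrix $(J^\rho)^{-1}$ acts directly on the auxiliary copy of $V$ created by $\coev_V$, and no $L$-operators appear until $K_0$ is composed with the double braiding $\sigma_{M,V}^{-1}\sigma_{V,M}^{-1}$ (it is precisely that composition which produces the $c'_{il}$ in Lemma~\ref{lem:K0P1T0}). There is therefore nothing for the character to absorb. Second, even if the $c'_{il}$ did appear, $\lambda^\omega_\nu(c'_{il})=\tq^\omega(J^\nu)^{-1}_{il}$, so the absorption would produce a matrix built from $(J^\nu)^{-1}$, not $(J^\rho)^{-1}$, yielding the wrong eigenvalue $\tq^{-\nu}$ instead of $\tq^{-\rho}$.

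The paper's proof of $K_0\sim\tq^{-\rho}$ is of a different nature and makes no use of the invariant subspace at all: it shows (Figure~\ref{fig:Kmult}) that the assignment $K:\End_\CC(V)\to\End_\CC(\A\ot V)$ sending $X$ to the tangle underlying $K_0$ with $(J^\rho)^{-1}$ replaced by $X$ is an algebra homomorphism, whence $K_0=K((J^\rho)^{-1})$ satisfies the same quadratic relation as $(J^\rho)^{-1}$ on all of $\A\ot V$. That relation then trivially persists on any subspace. Your proposed approach conflates this with the argument for $(vK_0P_1T_0)^{-1}\sim\tq^\nu$, where the character absorption really does take place: there the key identity is Lemma~\ref{lem:K0P1T0}, which re-expresses $K_0\sigma_{M,V}^{-1}\sigma_{V,M}^{-1}$ (note that $P_1T_0$ is, up to the scalar $\alpha$, exactly the double braiding, so this already \emph{is} $K_0P_1T_0$, and there is no need to invoke Lemma~\ref{lem:invident}) so that the $c'_{il}$ appear explicitly, and only then does $\lambda^\omega_\nu$ convert them into the matrix $\tq^\omega(J^{\nu})^{-1}$. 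Your accounting of the scalar $v$ also attributes a factor $\tq^{-N}$ to the Drinfeld element within this step, but the Drinfeld element enters only the $T_0\sim\tq^{p-q-\tau}$ calculation from Theorem~\ref{maintheoremAHA}; in the present step the factor $\alpha=\tq^{\eta-N}$ appears simply because $T_0$ was rescaled by $\alpha$. Your treatment of the Harish-Chandra part and exactness assertions is fine and matches the paper.
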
 

We let $T_0,\ldots, T_n$ act as in Section \ref{AHAsec}, and we let $K_0$ act as in Section \ref{dabg}.  We have only to prove the Hecke relations asserted in the theorem.  By remark \ref{faithfulA}, we may consider the faithful representation $M=\A$. As in the proof of Proposition \ref{Kreln},  $K_0$ takes the explicit form of Figure \ref{K0forA}.
\begin{proposition} We have the relation $K_0\sim {\tt q}^{-\rho}$.\label{K0prop}
\end{proposition}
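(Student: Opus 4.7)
By Remark \ref{faithfulA}, it suffices to verify the asserted Hecke relation on the faithful $D_\U$-module $M=\A$, and by the preceding proposition (Proposition \ref{Kreln}), this is what we already did for the braid relations. The plan is to exploit the fact that $\A$ is generated as an ind-algebra by the subspaces $i_W(W^*\boxtimes W)$ for $W\in\cC$, and that $K_0$ is compatible with the multiplication $\mu_\A$ by its very definition \eqref{K0def}. Consequently, it is enough to check the Hecke relation on elements coming from the generating subspaces $W^*\boxtimes W \subset \A$, where $K_0$ takes the simpler graphical form of Figure \ref{K0forA}.

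Once restricted to $W^*\boxtimes W$, the diagram for $K_0$ contains the morphism $J'_{V_1}$ exactly once, everything else being built out of braidings and (co)evaluations. The strategy is then identical in spirit to the proof of the Hecke relation $T_0 \sim \tq^{p-q-\tau}$ in the previous section (see Lemma \ref{lem:invident} and the proof that follows it): by applying QYBE, together with the naturality of the braiding, one can slide the cap and cup morphisms past the crossings so as to rewrite
\[
K_0 \big|_{i_W(W^*\boxtimes W)} \;=\; \Phi \circ \big(\id_{W^*\boxtimes W}\otimes_2 J'_{V_1}\big)\circ \Phi^{-1},
\]
where $\Phi$ is an invertible operator built purely out of the braidings $\sigma_{V_1,W}$, $\sigma_{W,V_1}$, and the coevaluation pair used in \eqref{K0def}. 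Since conjugation preserves the minimal polynomial, it follows that $K_0$ has the same minimal polynomial as $J'_{V_1}$.

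Now by construction of $J'_V$ in Section \ref{sec:coidealsubalg}, the operator on the vector representation is $J'_V = (J^\rho)^{-1}$. The matrix $J^\rho$ satisfies the Hecke relation $J^\rho \sim \tq^\rho$, i.e. $(J^\rho - \tq^\rho)(J^\rho + \tq^{-\rho})=0$, so its eigenvalues are $\tq^\rho$ and $-\tq^{-\rho}$. Inverting, $(J^\rho)^{-1}$ has eigenvalues $\tq^{-\rho}$ and $-\tq^\rho$, that is, $(J^\rho)^{-1} \sim \tq^{-\rho}$. Combining this with the previous paragraph yields $K_0 \sim \tq^{-\rho}$, as claimed.

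The main obstacle is the middle step: diagrammatically extracting the conjugation formula for $K_0$ on $W^*\boxtimes W$. One has to keep careful track of which strands carry the $W$, $W^*$, $V_1$ labels and to verify that the multiplication $\mu_\A$, when unfolded on the generators using the rule described in Section \ref{CoEndSec}, really yields a pure conjugation pattern around a single $J'$ insertion and does not produce extra $J'$ factors that could alter the spectrum. Once this is in place, the eigenvalue computation is immediate.
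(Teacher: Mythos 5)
The overall scaffolding of your proposal agrees with the paper's: reduce to the faithful module $M=\A$ and work on the generating subspaces $W^*\boxtimes W$, where $K_0$ takes the simplified form of Figure~\ref{K0forA}, and then transfer the Hecke relation from $(J^\rho)^{-1}$ to $K_0$. However, the middle step you flag as ``the main obstacle'' is exactly where the argument diverges from what can actually be established, and as written it does not close.

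Your proposed conjugation formula
\[
K_0 \big|_{i_W(W^*\boxtimes W)} \;=\; \Phi \circ \bigl(\id\otimes_2 J'_{V_1}\bigr)\circ \Phi^{-1}
\]
cannot hold with $\Phi$ ``built purely out of the braidings $\sigma_{V_1,W}$, $\sigma_{W,V_1}$, and the coevaluation pair used in~\eqref{K0def}.'' Coevaluations $\coev_V$, $\coev_{^*V}$ are maps out of the unit object and are not isomorphisms, so any $\Phi$ assembled from them cannot be invertible. Moreover, $K_0$ does not even stabilize the subspace $i_W(W^*\boxtimes W)\ot V$: applying $\mu_\A$ after the coevaluations lands you in a different generating piece $(W\ot V)^*\boxtimes (W\ot\,{}^*V)$, so ``conjugation'' by an endomorphism of the source is not the right shape. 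The analogy you draw to Lemma~\ref{lem:invident} also does not carry over: there, $T_0^{-1}$ is rewritten as a conjugate of $\tilde{J}$ purely by braidings, which \emph{are} invertible; no coevaluations enter.

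The paper's argument replaces conjugation with the weaker but provable statement that the linear map
\[
K:\End_\CC(V)\to\End_\CC(\A\ot V),\qquad X\mapsto \mu_\A\circ\sigma\circ\bigl((X\ot 1)\circ\coev_V\bt(\id\ot\coev_{^*V})\bigr)\circ\sigma^{-1},
\]
is multiplicative: $K(X)K(Y)=K(XY)$. This is what Figure~\ref{fig:Kmult} proves, and crucially the proof \emph{uses the defining relations~\eqref{eqn:Arelns} of $\A$} to let the coevaluations on the two sides of the $\boxtimes$ pair off against each other. Those relations are precisely what your sketch omits, and they are indispensable: without them one cannot get the coevaluations to ``cancel.'' Once multiplicativity (and unitality, which one checks by inserting $X=\id$) of $K$ is in hand, $K_0=K((J^\rho)^{-1})$ automatically satisfies the same quadratic relation $(J^\rho)^{-1}$ does, giving $K_0\sim\tq^{-\rho}$. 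So the fix for your gap is not a diagrammatic conjugation argument but the homomorphism property of $K$, established through the relations of $\A$.
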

\begin{proof}
Let $V\in \U$-mod. In Figure \ref{fig:Kmult}, it is proven that the assignment
$$K: \End_\CC(V)\to \End_\CC(A\ot V)$$
$$X\mapsto (X\ot \id \ot \id)\circ(\coev_V\ot\id) \bt (\id\ot \id \ot \coev_{^*V})$$
is an algebra homomorphism.  It follows that \mbox{$K_0=K((J^{\rho})^{-1})$} satisfies the same quadratic relation, $K_0\sim{\tt q}^{-\rho}$, as $(J^\rho)^{-1}$.
\begin{figure}[h]
\includegraphics[width=4.5in]{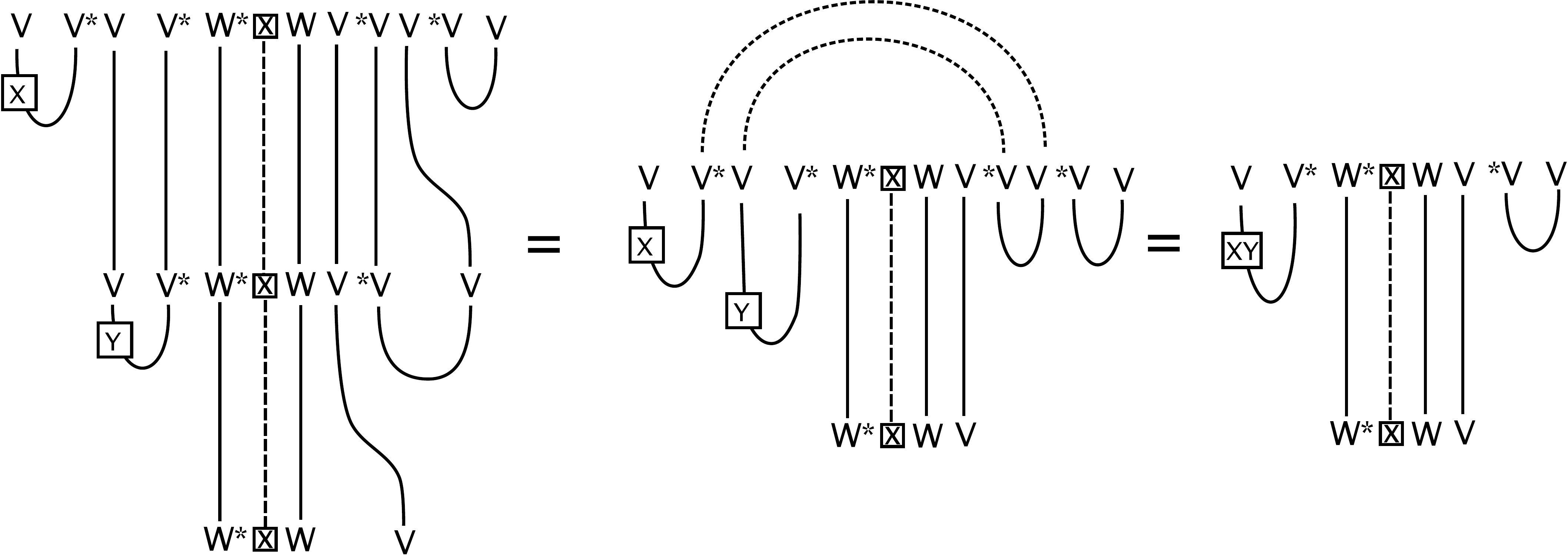}
\caption{Proof of $K(X)K(Y)=K(XY)$.  The left hand side is the composition $K(X)K(Y)$.  The first equality is straightforward.  The second equality applies relations \eqref{eqn:Arelns} to $\coev_{^*V}$ as indicated by the dotted lines.}\label{fig:Kmult}
\end{figure}
\end{proof}
\begin{proposition} We have the relation $(v K_0P_1T_0)^{-1} \sim \tq^{\nu}$, where $v=\alpha\tq^{-\omega}$.
\end{proposition}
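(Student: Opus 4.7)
The strategy mirrors the proof of $T_0\sim \tq^{p-q-\tau}$ given at the end of Theorem \ref{maintheoremAHA}.  First, I would simplify the operator.  Since $T_0=\alpha P_1^{-1}(\sigma_{V_1,M}\sigma_{M,V_1})^{-1}$, the factors of $P_1$ cancel, yielding
\begin{equation*}
(vK_0P_1T_0)^{-1} = (v\alpha)^{-1}\,(\sigma_{V_1,M}\sigma_{M,V_1})\,K_0^{-1},\qquad v\alpha=\tq^{-2N+2\eta-\omega}.
\end{equation*}
By Proposition \ref{K0prop}, or more precisely the algebra homomorphism $K:\End_\CC(V)\to\End_\CC(\A\otimes V)$ built in its proof, we have $K_0^{-1}=K(J^\rho)$.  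By Remark \ref{faithfulA} it suffices to verify the Hecke relation in the universal case $M=\A$, where $K_0$ takes the explicit form of Figure \ref{K0forA}.

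The heart of the argument is to establish a $K_0$-analog of Lemma \ref{lem:invident}, identifying the composite $(\sigma_{V_1,M}\sigma_{M,V_1})K_0^{-1}$, on the subspace of $\B'_\rho$-invariants with character $\lambda^\omega_\nu$, with an explicit matrix $\widetilde{J}$ acting on $V_1$ alone.  Just as the observation following Proposition \ref{JREprop} identifies the matrix coefficients of $\sigma_{V_2,V_1}J_{V_1}\sigma_{V_1,V_2}$ with the generators $c_{il}$ of $\B_\sigma$, here -- after reorganizing the diagram via the relations \eqref{eqn:Arelns} and the left-handed reflection equation for $J^\rho$ -- one identifies the matrix coefficients of $(\sigma_{V_1,M}\sigma_{M,V_1})K(J^\rho)$ on $M\otimes V_1$ with the generators $c'_{il}=\sum S(l^-_{ij})(J^\rho)^{-1}_{jk}l^+_{kl}$ of $\B'_\rho$ acting on $M$ via $\partial_\lhd$.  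To then trade this $M$-action for a scalar matrix on $V_1$, one uses the right coideal property $\Delta(\B'_\rho)\subset \B'_\rho\otimes\U$ through the dual identity $(x\otimes 1) = (x_{(1)}\otimes x_{(2)})(1\otimes S^{-1}(x_{(3)}))$ applied to $x=c'_{il}$, together with the character value $\lambda^\omega_\nu(c'_{il})=\tq^\omega(J^\nu)^{-1}_{il}$, producing $\widetilde{J}\in\End_\CC(V_1)$ as an explicit polynomial in $\rho_V(l^\pm)$ and the entries of $(J^\nu)^{-1}$.

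The final step is a direct matrix computation, entirely parallel to the end of the proof in Theorem \ref{maintheoremAHA}: substituting the $R$-matrix formulas \eqref{eqn:R}, \eqref{eqn:R4} and absorbing the Drinfeld element $\rho_V(u)=\sum \tq^{2i-2}E_i^i$ gives a semisimple matrix with exactly two eigenvalues.  The scalar $(v\alpha)^{-1}=\tq^{2N-2\eta+\omega}$ has been chosen precisely so that after rescaling these eigenvalues become $\tq^\nu$ and $-\tq^{-\nu}$, yielding the Hecke relation $(vK_0P_1T_0)^{-1}\sim\tq^\nu$.

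The principal obstacle is the identification step above: whereas the tangle $T_{(1\cdots n)}T_nT_{(n\cdots 1)}$ unravels directly as $\sum (E_i^l)_{V_1}\otimes (c_{il})$, the tangle for $K_0$ additionally involves the multiplication $\mu_M$ on $M$, so recognizing its matrix coefficients on $M\otimes V_1$ as the generators $c'_{il}$ will require an additional layer of diagrammatic rearrangement, analogous to the one performed in Figure \ref{fig:KTKTpf}.  Once this recognition is in place, substitution of the character and the final eigenvalue computation proceed routinely.
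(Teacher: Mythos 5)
Your high-level strategy---find a $K_0$-analog of Lemma \ref{lem:invident}, express the operator through the generators $c'_{il}$ of $\B'_\rho$, invoke the character $\lambda^\omega_\nu$, and read off the Hecke relation---correctly identifies the skeleton of the argument, but the way you plan to execute it differs from the paper in two places, and in both the details as written have gaps. First, you pass to $(\sigma_{V_1,M}\sigma_{M,V_1})K_0^{-1}=(\sigma\sigma)K(J^\rho)$ and assert its matrix coefficients on $M\ot V_1$ are the $c'_{il}=\sum S(l^-_{ij})(J^\rho)^{-1}_{jk}l^+_{kl}$. That cannot be right on its face: $K(J^\rho)$ is built from $J^\rho$ while $c'_{il}$ is built from $(J^\rho)^{-1}$. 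The paper's Lemma \ref{lem:K0P1T0} shows that it is $K_0\sigma_{M,V}^{-1}\sigma_{V,M}^{-1}$---the \emph{inverse} of your operator---whose $\xi$-piece acts by $c'_{il}$; your operator would carry the entries of the inverse matrix. This happens to wash out after applying $\lambda^\omega_\nu$, but it means your ``additional layer of diagrammatic rearrangement'' is not an improvisation in the spirit of Figure \ref{fig:KTKTpf}: it is precisely Lemma \ref{lem:K0P1T0}, and that Lemma's proof (Figure \ref{fig:K0P1T0pt1}) uses only the defining relations \eqref{eqn:Arelns} and QYBE---not, as you suggest, the left-handed reflection equation for $J^\rho$.

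Second, and more consequentially, the paper does not perform a coideal-property trade nor the explicit $R$-matrix/Drinfeld-element computation you plan at the end. A Lemma \ref{lem:invident}-style identity $(x\ot 1)=(x_{(1)}\ot x_{(2)})(1\ot S^{-1}(x_{(3)}))$ cannot be used here in the same way, because $K_0$ is built from the multiplication $\mu_M$; it is a morphism in $\cC\bt\cC$ rather than the action of a $\U$-element on a tensor factor, so there is no place for a coproduct identity to move the $M$-action onto $V_1$. What Lemma \ref{lem:K0P1T0} actually buys you is a factored form in which, once $\lambda^\omega_\nu$ is applied on the $(\B'_\rho,\lambda^\omega_\nu)$-invariants, the operator $\tq^{-\omega}K_0\sigma_{M,V}^{-1}\sigma_{V,M}^{-1}$ becomes exactly the image of $(J^\nu)^{-1}$ under the algebra homomorphism $K$ of Proposition \ref{K0prop}. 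Since $K$ preserves minimal polynomials, the Hecke relation follows immediately from $(J^\nu)^{-1}\sim\tq^{-\nu}$; no $\rho_V(l^\pm)$ or Drinfeld-element factors survive to the final matrix. The direct diagonalization you propose ``entirely parallel to Theorem \ref{maintheoremAHA}'' is therefore not parallel at all: in the AHA case the matrix $\tilde J$ genuinely retains $\rho_V(S(l^\pm))$ factors that must be multiplied out, while in the DAHA case the $l^\pm$'s are absorbed into the $c'_{il}$'s and killed by the character before any matrix appears.
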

\begin{proof}
By definition, we have $v K_0P_1T_0=\tq^{-\omega} K_0\sigma_{M,V}^{-1}\sigma_{V,M}^{-1}$.  We have the following
\begin{lemma}\label{lem:K0P1T0} We have the identity:
$$K_0\sigma_{M,V}^{-1}\sigma_{V,M}^{-1}= \xi\bt(\sigma_{V\ot ^*V,W}^{-1}\circ (\id_V\ot \coev_{\,^*V})),$$
where $\xi=(\sigma_{V,W^*}\ot \id)\circ((J^\rho)^{-1}\ot\id\ot\id)\circ(\sigma_{W^*,V}\ot\id)\circ(\id\ot \coev_{V})$.
\end{lemma}
\begin{proof} The proof is given in Figure \ref{fig:K0P1T0pt1}.
\begin{figure}[h]
\begin{center}
\includegraphics[width=4.5in]{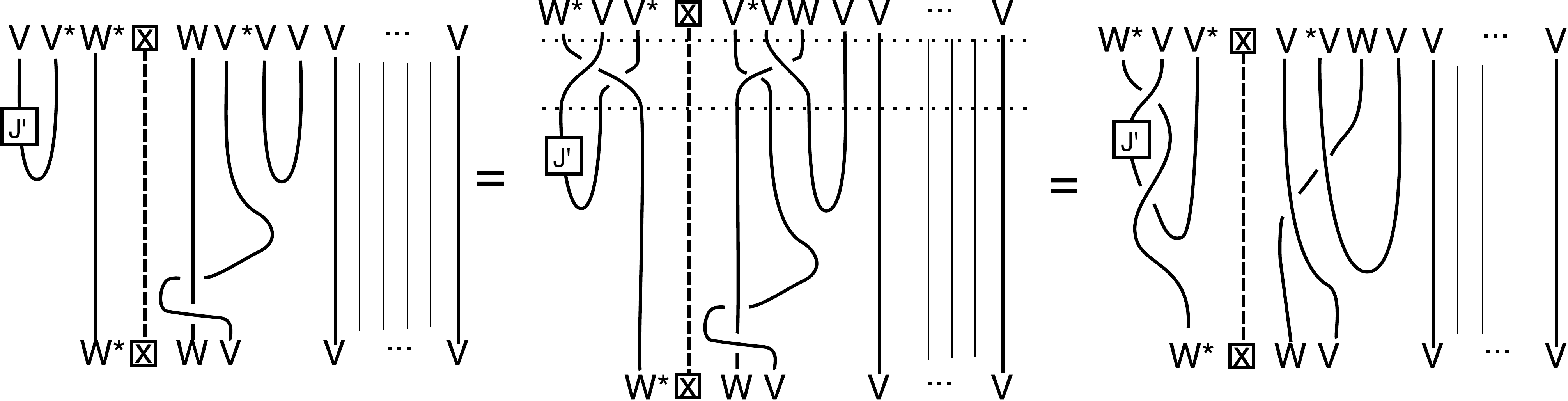}
\end{center}
\caption{Proof of Lemma \ref{lem:K0P1T0}.  The first equality applies relations of equation \eqref{eqn:Arelns} between the dotted lines.  The second equality uses only QYBE. We have abbreviated \mbox{$J':=(J^\rho)^{-1}.$}}\label{fig:K0P1T0pt1}
\end{figure} 
\end{proof}
Now, we can express $\xi$ in terms of the $c'_{il}$:
\begin{align*}\xi:f &\mapsto \sum S(l^-_{ij})(J^\rho)^{-1}_{jk}l^+_{kl} f\ot E_i^l e_m\ot e^m\\&=\sum c'_{il} f\ot E_i^l e_m\ot e^m,
\end{align*}
where $\{e^i\}$ denotes the dual basis to $\{e_i\}$.  Thus, on the space of $(B'_\rho,\lambda^\omega_\nu)$ invariants, we have
$$\xi: \sum f_j\bt w_j\ot v_{j,1}\ot\cdots\ot v_{j,n} \mapsto \tq^{\omega}\sum f_j\ot (J^{\nu})^{-1}e_m\ot e^m\bt w_j\ot v_{j,1}\ot\cdots\ot v_{j,n} .$$
Thus, we have that
$$\tq^{-\omega}K_0\sigma_{M,V}^{-1}\sigma_{V,M}^{-1}=((\id\ot (J^{\nu})^{-1}\ot \id)\bt \id)\circ (\id\ot \coev_V\bt \sigma^{-1}_{V\ot^*V,W}\circ\coev_{^*V}).$$
Now arguing as in Proposition \ref{K0prop}, we see that $vK_0P_1T_0$ has the same minimal polynomial as $(J^{\nu})^{-1}$, and we are done.

The second part of the theorem follows as in the proof of Theorem \ref{maintheoremAHA}.
\end{proof}

\begin{remark} 
\emph{A priori}, for each $n,N,p$, $F_n^{\sigma,\eta,\tau}$ depends upon the four continuous parameters $\tq, \tq^\sigma, \tq^\eta, \tq^\tau$.  However, it is clear from the definition that $F_n^{\sigma,\eta,\tau}$ is the precomposition of $F_n^{\sigma,0,\tau}$ by the automorphism of $\cC$ given by $M\mapsto \,^*\trivial^\eta\ot M$, corresponding to the fractional tensor power of the determinant character.

\emph{A priori}, for each $n,N,p$, $F^{\sigma,\eta,\tau}_{n,\rho,\omega,\nu}$ depends upon the seven continuous parameters, $\tq, \tq^\sigma, \tq^\eta, \tq^\tau, \tq^\rho, \tq^\omega, \tq^\nu$.  However, as above, we can express $F^{n,\sigma,\eta+\xi,\tau}_{\rho,\omega+\xi,\nu}$ as the precomposition of $F^{\sigma,\eta,\tau}_{n,\rho,\omega,\nu}$ by twisting the $D_\U$ module $M$ with a fractional tensor power of the determinant local system.  On the other hand, $F^{\sigma,\eta,\tau}_{n,\rho,\omega,\nu}(M)$ will be zero unless $\lambda^\omega_\nu(\det_\tq)=\chi^\eta_\tau(\det_\tq)\tq^{-n/N}$.  This is because the element $\det_\tq$ is central and thus its image in $D_\U$ under both the left and right actions coincide, so that the values of the characters can only differ by the contribution of the factor $(1\bt V)^{\ot n}$.  Thus we really have five continuous parameters. 
\end{remark}

\section{The relation to the trigonometric $\mathrm{d}$AHA and $\mathrm{d}$DAHA}\label{sec:degeneration}

In this section we recall the construction in \cite{EFM}, and show that it may be recovered as the trigonometric degeneration of our construction. Furthermore, we reprove the main results from that paper, quoted below as Theorems \ref{aff} and \ref{daff}.  Beyond giving a new proof of a known result, this serves two purposes: it provides us an explicit check of our computations in the preceding section, and it also illustrates the process of trigonometric degeneration, whereby very complicated Lie-theoretic formulas appear as the first derivative in $\hbar$ of considerably more natural formulas in quantum groups and braided tensor categories.

\subsection{The dAHA of type $BC_{n}$}\label{dAHAsec}

Let $\mathcal{W}_{n}=\mathcal{S}_n\ltimes (\mathbb{Z}_{2})^n$ 
be the Weyl group of type $BC_{n}$.
We denote by $s_{ij}$ the reflection in this group 
corresponding to the root $\varepsilon_i-\varepsilon_j$,
and by $\gamma_{i}$ the reflection corresponding to
$\varepsilon_{i}$  We abbreviate $s_i:=s_{i,i+1}$. The type $BC_{n}$ 
dAHA $\mathcal{H}^{\deg}_{n}(\kappa_1,\kappa_2)$
is generated by $y_{1},\ldots,y_{n}$ 
and $\mathbb{C}[\mathcal{W}_n]$, with cross relations: 
\begin{eqnarray*}
s_{i}y_{i}-y_{i+1}s_{i}=\kappa_{1};\quad
[s_{i},y_{j}]=0,\quad \forall j\neq i,i+1;\\
\gamma_{n}y_{n}+y_{n}\gamma_{n}=\kappa_2;\quad 
[\gamma_{n},y_{j}]=0,\quad \forall j\neq n; \quad [y_{i},y_{j}]=0. 
\end{eqnarray*}
For any $c\neq 0$, we have an isomorphism $\mathcal{H}^{\deg}_{n}(\kappa_1,\kappa_2)\cong \mathcal{H}^{\deg}_{n}(c\kappa_1,c\kappa_2)$. 

Let us recall the construction of the functor $F_{n,p,\mu}$ in \cite{EFM}.
Let $\mathbb{C}^{N}$ be the vector representation of $\mathfrak{g}=\mathfrak{gl}_{N}$.
Let $M$ be a $\g$-module. Define 
\begin{equation*}
F_{n,p,\mu}(M)=(M\otimes (\mathbb{C}^{N})^{\otimes n})^{\mathfrak{k}_{0},\mu},
\end{equation*}
where $\mathfrak{k}_{0}$ is the subalgebra in $\mathfrak{k}=\mathfrak{gl}_{p}\times\mathfrak{gl}_{q}$ consisting of trace zero elements and, for $\mu\in\CC$, $(\mathfrak{k}_{0},\mu)$-invariants means for all $x\in \mathfrak{k}_{0}$,
$xv=\mu\chi(x)v$. Here $\chi$ is a character of $\mathfrak{k}$ defined in \cite{EFM}:
\begin{equation}\label{chieq}\chi(\left(\begin{array}{cc}A_1 & 0 \\0 & A_2\end{array}\right))=q\tr A_{1}-p\tr A_{2}.\end{equation}

The Weyl group $\mathcal{W}_{n}$ acts on $F_{n,p,\mu}(M)$
in the following way: the element $s_{ij}$ acts by exchanging 
the $i$-th and $j$-th factors, 
and $\gamma_{i}$ acts by multiplying the $i$-th factor by $J=\left(\begin{array}{cc}I_p &  \\ & -I_q\end{array}\right)$.

Define elements $y_{k}\in \End_{\CC}(F_{n,p,\mu}(M))$ as follows: 
\begin{equation}\label{eqn:dAHAyi}
y_{i}=-\sum_{s|t}(E_s^t\otimes E_t^s)_{0i}+\frac{p-q-\mu N}{2}\gamma_{i}+
\frac{1}{2}\sum_{k>i}s_{ik}-\frac{1}{2}\sum_{k<i}s_{ik}+
\frac{1}{2}\sum_{i\neq k}s_{ik}\gamma_{i}\gamma_{k},
\end{equation}
where 
$\sum_{s|t}=\sum_{s=1}^{p}\sum_{t=p+1}^{n}+\sum_{t=1}^{p}\sum_{s=p+1}^{n}$, 
the first component acts 
on $M$ and the second component acts on the $k$-th factor
of the tensor product.

\begin{theorem}[\cite{EFM}]\label{aff} 
The above action of $\mathcal{W}_{n}$ and 
the elements $y_{i}$ define a representation of the degenerate affine Hecke algebra 
$\mathcal{H}_n^{\deg}(\kappa_1,\kappa_2)$ on the space $F_{n,p,\mu}(M)$,
with $$\kappa_1=1,\quad \quad \kappa_2 = p-q - \mu N.$$
\end{theorem}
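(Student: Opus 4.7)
The plan is to derive Theorem \ref{aff} from Theorem \ref{maintheoremAHA} by taking the quasi-classical limit $\hbar \to 0$ after setting $\tq = e^\hbar$, $\tq^\sigma = e^{\sigma\hbar}$, $\tq^\tau = e^{\tau\hbar}$, $\tq^\eta = e^{\eta\hbar}$, and working over $\cK = \CC((\hbar))$. First I would identify the quasi-classical data: Proposition \ref{Bsigmaisom} together with flatness of the deformation $\cU_\tq(\g) \rightsquigarrow \cU(\g)$ should imply that $\B_\sigma$ degenerates to $\cU(\kk')$, and that the properly rescaled matrix $\tq^{-\sigma} J^\sigma$ degenerates to the classical involution $J'$ of \eqref{eqn:J'}. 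Conjugation by the matrix $g$ that intertwines $\kk$ and $\kk'$ then converts the degenerate $\B_\sigma$-invariants with character $\chi^\eta_\tau$ into $\kk_0$-invariants with the character $\mu\chi$ of \eqref{chieq}, provided the parameters $\tau$ and $\eta$ are matched against $\mu$ (with $\eta$ encoding the central determinant contribution that distinguishes $\kk$ from $\kk_0$). This identifies $F^{\sigma,\eta,\tau}_n(M)$ with $F_{n,p,\mu}(M)$ in the limit.

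Second, I would expand each generator in powers of $\hbar$. At order zero, $\sigma_{V_i,V_{i+1}} \to \tau_{i,i+1}$ gives $T_i \mapsto s_i$ for $1 \le i \le n-1$, and the rescaled $J^\sigma$ gives $T_n \mapsto \gamma_n$, so the Hecke algebra generators degenerate to the generators of $\CC[\mathcal{W}_n]$. The element $T_0$, which at order zero is the Weyl group element obtained by conjugating $\gamma_1$ by $P_1^{-1}$ and then by the flip at $V_1 \ot M$, should be normalized (using the standard passage from trigonometric eigenvalues $\tq^a, -\tq^{-a}$ to $\pm 1$) to have the form $1 + \hbar Y_1 + O(\hbar^2)$ on the relevant invariants. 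The first-order term $Y_1$ is then the desired operator $y_1$; the remaining $y_i$ are produced automatically by the cross-relations $s_i y_i - y_{i+1} s_i = 1$, which arise as the $\hbar^1$-coefficient of the quantum braid identity $T_i T_0 T_1 = T_1 T_0 T_i$ combined with $T_i \sim \tq$.

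The main obstacle is the explicit first-order computation identifying $Y_1$ with $y_1$ of \eqref{eqn:dAHAyi}. Writing $\sigma_{X,Y} = \tau_{X,Y} \circ (1 + \hbar\, r_{X,Y} + O(\hbar^2))$ with $r$ the classical $r$-matrix for $\mathfrak{gl}_N$, the factor $(\sigma_{V_1,M}\sigma_{M,V_1})^{-1}$ expands as $1 - \hbar\,\Omega_{V_1,M} + O(\hbar^2)$ with $\Omega$ the Casimir, producing the $-\sum_{s\mid t}(E_s^t \ot E_t^s)_{0,1}$ contribution of \eqref{eqn:dAHAyi} once the $\kk$-piece of $\Omega$ is absorbed by the $(\kk_0,\mu)$-invariance. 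The $\hbar^1$-expansion of $P_1^{-1}$ via $T_i = s_i(1 + \hbar\, r_{i,i+1})$, combined with the order-zero identity $P_1|_{\hbar=0} = \gamma_1$, should produce precisely $\tfrac{1}{2}\sum_{k>1}s_{1k} - \tfrac{1}{2}\sum_{k<1}s_{1k} + \tfrac{1}{2}\sum_{k\ne 1}s_{1k}\gamma_1\gamma_k$ in \eqref{eqn:dAHAyi}, with the $\gamma_1\gamma_k$ terms arising from the portion of $P_1$ that passes through $\gamma_n$. Finally, the prefactor $\alpha = \tq^{-N+\eta}$ together with the Hecke eigenvalue $\tq^{p-q-\tau}$ contributes the scalar $\tfrac{p-q-\mu N}{2}\gamma_1$, fixing $\kappa_1 = 1$ and $\kappa_2 = p-q-\mu N$. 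The bulk of the technical work is keeping careful track of signs and factors of $\tfrac{1}{2}$ induced by the normalization $T_i \sim \tq$ versus the unitary normalization $s_i^2 = 1$, and verifying that the remaining higher-order terms in $\hbar$ vanish on the invariants by the $(\B_\sigma, \chi^\eta_\tau)$-condition.
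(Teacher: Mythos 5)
Your overall strategy---recovering Theorem \ref{aff} as the quasi-classical limit of Theorem \ref{maintheoremAHA}---is the paper's own, and the ingredients you name (degeneration of $\B_\sigma$, conjugation by $g$, order-zero matching of the Weyl group generators, a first-order expansion producing $y_1$) are the right ones. Two of your key steps, however, are not correct as written.

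First, the $\hbar$-expansion of $P_1^{-1}$ contributes nothing, so it cannot be the source of the transposition and $\gamma_1\gamma_k$ terms in \eqref{eqn:dAHAyi}. The operator to degenerate is $Y_1 = P_1 T_0$ from \eqref{eqn:Yidef}, and since $\alpha$ is a scalar, $Y_1 = P_1\bigl(\alpha P_1^{-1}(\sigma_{V_1,M}\sigma_{M,V_1})^{-1}\bigr) = \alpha(\sigma_{V_1,M}\sigma_{M,V_1})^{-1}$ \emph{exactly}: the $P_1$-factors cancel before any expansion occurs. (Note also that $T_0|_{\hbar = 0} = \gamma_1$, not a conjugate of $\gamma_1$ by the flip---the double braiding $\sigma_{V_1,M}\sigma_{M,V_1}$ is the identity at order zero.) The quasi-classical limit is then $\hat y_1 = -\Omega_{01} + \tfrac{\eta-N}{2}$ (Proposition \ref{yis}, quoting \cite{J}), with no half-sums of transpositions and no $\gamma_1\gamma_k$ terms. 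The reconciliation with \eqref{eqn:dAHAyi} runs in the direction opposite to yours: Proposition \ref{yi-ident} simplifies the \cite{EFM} operator $y_1$ \emph{on the $(\kk,\tilde\chi^\eta_\tau)$-invariants}, converting $\tfrac12\sum_{k>1}s_{1k} + \tfrac12\sum_{k>1}s_{1k}\gamma_1\gamma_k$ into $\sum_{k>1}\sum_{i,j}(E_i^j)_1\ot(E_j^i)_k$ and pushing the $\kk$-action onto the $M$-factor, yielding $y_1 = -\Omega_{01} + \tfrac{\eta-N}{2} + \tfrac{(\tau-\sigma)-\mu N}{2}\gamma_1$. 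The residual $\gamma_1$-term then vanishes only because the parameter identification forces $\tau-\sigma = \mu N$, not because of the prefactor $\alpha$ or the Hecke eigenvalue of $T_0$ as you suggest.

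Second, deducing the degeneration $\B_\sigma^s/\hbar\B_\sigma^s \cong \cU(\kk')$ from Proposition \ref{Bsigmaisom} and flatness is circular: the proof of Proposition \ref{Bsigmaisom} itself invokes the generator-by-generator degeneration computed in Section \ref{Bdegsec}, and in any case that proposition only asserts the abstract isomorphism of the $\B_\sigma$ for varying $\sigma$, not the identity of their classical limit. Flatness gives existence of a limit but says nothing about which subalgebra of $\cU(\mathfrak{gl}_N)$ it is; the explicit computation of the quasi-classical limits of the $c_{il}$ (the content of the Claim in Section \ref{Bdegsec}) is an unavoidable step, and your proof must include it or cite it directly.
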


\subsection{The dDAHA of type $BC_{n}$}\label{dDAHApres}
The type $BC_{n}$ 
dDAHA $\HH^{\deg}(t,k_{1},k_{2},k_{3})$ is
generated by two commutative families $\{x_{i}, i=1, \ldots, n\}$, 
$\{y_{i},i=1, \ldots, n\}$ and $\mathbb{C}[\mathcal{W}_{n}]$, with relations: 
\begin{enumerate}
\item[i)] $s_{i}x_{i}-x_{i+1}s_{i}=0$, $[s_{i},x_{j}]=0, (j\neq i,i+1)$;
\item[ii)] $s_{i}y_{i}-y_{i+1}s_{i}=k_{1}$, $[s_{i},y_{j}]=0, (j\neq i,i+1)$;
\item[iii)]  $\gamma_{n}y_{n}+y_{n}\gamma_{n}=k_{2}+k_{3}$, 
$\gamma_{n}x_{n}=x_{n}^{-1}\gamma_{n}$, 
\\$[\gamma_{n},y_{j}]=[\gamma_{n},x_{j}]=0, (j\neq n)$;
\item[vi)]$[y_{j},x_{i}]=k_{1}x_{i}s_{ij}-k_{1}x_{i}s_{ij}\gamma_{i}
\gamma_{j}$, \\ $[y_{i},x_{j}]=k_{1}x_{i}s_{ij}-k_{1}x_{j}s_{ij}\gamma_{i}
\gamma_{j},(i<j)$;
\item[v)] 
\begin{eqnarray*}
[y_{i},x_{i}]
&=&tx_{i}-k_{1}x_{i}\sum_{k>i}s_{ik}-k_{1}\sum_{k<i}s_{ik}x_{i}-k_{1}x_{i}
\sum_{k\neq i}s_{ik}\gamma_{i}\gamma_{k}\\
&&\qquad- (k_{2}+k_{3})x_{i}\gamma_{i}-k_{2}\gamma_{i}.
\end{eqnarray*}
\end{enumerate}

In particular, we see that the subalgebra in the dDAHA generated 
by $\mathcal{W}_n$ and the $y_i$ is $\mathcal{H}_n^{\deg}(\kappa_1,\kappa_2)$, where 
$\kappa_1=k_1$ and $\kappa_2=k_2+k_3$.

Let $\lambda\in \mathbb C$. 
For $x\in \g$, let $L_x$ denote the vector field on 
$G$ generated by the left action of $x$.
Let $D^{\lambda}(\GL(N)/(\GL(p)\times \GL(q)))$ be the sheaf of differential
operators on $\GL(N)/(\GL(p)\times \GL(q))$, twisted by the character $\lambda\chi$. 

Let $M$ be a $D^{\lambda}(\GL(N)/(\GL(p)\times \GL(q)))$-module. 
Then $M$ is naturally a $\g$-module, via the vector
fields $L_x$. Define
 $$
F^\lambda_{n,p,\mu}(M)=
(M\otimes V^{\otimes n})^{\kk_0,\mu}.
$$ 
Then $F^\lambda_{n,p,\mu}(M)$ is a $\cH_n^{\deg}$-module
as in the Theorem \ref{aff}. 

For $i=1,\ldots, n$, define the following linear operators
on the space $F^\lambda_{n,p,\mu}(M)$:
\begin{eqnarray*}
x_{i}&=&\sum_{s,t}(AJA^{-1}J)_{st}\otimes(E_s^t)_{i},\\
\end{eqnarray*}
where $(AJA^{-1}J)_{st}$ is the function of $A\in \GL(N)/\GL(p)\times \GL(q)$ which takes the $st$
-th element of $AJA^{-1}J$ and the second component acts on the 
$i$-th factor in $V^{\otimes n}$. 
\begin{theorem}[\cite{EFM}]\label{daff}
The above action of $\mathcal{W}_{n}$ and 
the elements $x_{i}, y_{i}$ define
a representation of the dDAHA
$\HH^{\deg}(t,k_{1},k_{2},k_{3})$ 
on the space $F^\lambda_{n,p,\mu}(M)$, with  
\begin{equation}\label{par-rel-2}
t=\dfrac{2n}{N}+(\lambda+\mu)(q-p),\quad k_{1}=1,\quad k_{2}=p-q-\lambda N,\quad k_{3}=(\lambda-\mu)N.
\end{equation}
We have a functor $F^\lambda_{n,p,\mu}$ 
from the the category of  
\mbox{$D^{\lambda}(\GL(N)/\GL(p)\times \GL(q))$}-modules 
to the category of representations of the type $BC_n$ 
dDAHA with such parameters.
\end{theorem}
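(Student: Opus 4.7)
The plan is to deduce Theorem \ref{daff} from Theorem \ref{maintheoremDAHA} by passing to the quasi-classical limit $\hbar\to 0$ in the sense of Section \ref{sec:degeneration} and matching parameters. Work over $\cK=\CC((\hbar))$, writing $\tq=e^\hbar$, $\tq^\sigma=e^{\sigma\hbar}$, and analogously for $\tau,\eta,\rho,\omega,\nu$, so that every parameter can be Taylor-expanded in $\hbar$. By the theory of quantum symmetric pairs (Letzter), the coideal $\B_\sigma\subset\cU_\tq(\g)$ specializes at $\hbar=0$ to $\cU(\kk')$, and from the explicit formula \eqref{eqn:cij} one identifies the degeneration of the character $\chi^\eta_\tau$ with $\mu\chi$ of \eqref{chieq}, for an affine function $\mu=\mu(\eta,\tau)$; similarly the second character $\lambda^\omega_\nu$ degenerates to the twist parameter $\lambda$ on the symmetric space. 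Non-degenerate $D_\U$-modules degenerate to $D^\lambda(G/K)$-modules after conjugating $\kk'$ to $\kk$ by the matrix of \eqref{eqn:J'}, so that $F^{\sigma,\eta,\tau}_{n,\rho,\omega,\nu}(M)$ degenerates to $F^\lambda_{n,p,\mu}(M)$.

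Next I would expand each Hecke generator to first order in $\hbar$. Using $R=1+\hbar r+O(\hbar^2)$ where $r$ is the classical $r$-matrix of $\mathfrak{gl}_N$, the braiding generator $T_i$ ($i<n$) becomes $s_i+O(\hbar)$; $T_n=J^\sigma$ becomes $\gamma_n+O(\hbar)$; and $T_0$, $K_0$ become lifts of $\gamma_1$ and of multiplication by the matrix element $(AJA^{-1}J)_{st}$, respectively. For the dDAHA generator $y_i$, one combines the first-order expansion of $T_0$ (which produces the mixed Casimir term $-\sum_{s|t}(E_s^t\otimes E_t^s)_{0i}$ together with the scalar shift $\tfrac{p-q-\mu N}{2}\gamma_i$ from the $\sigma$-derivative of $J^\sigma$) with conjugation by $T_{(1\cdots i)}$ (which generates the signed sums of transpositions and the $s_{ik}\gamma_i\gamma_k$-terms of \eqref{eqn:dAHAyi}). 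The generator $x_i$ is then obtained from $K_0$ by conjugation with the $T_i$; that the limiting operator is indeed multiplication by $(AJA^{-1}J)_{st}$ follows from the explicit diagrammatic form of $K_0$ acting on $\A$ (Figure \ref{K0forA}) together with the degeneration of $(J^\rho)^{-1}$ and $\mu_M$.

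Since Theorem \ref{maintheoremDAHA} provides an action of the full DAHA prior to taking $\hbar\to 0$, the subleading term in $\hbar$ of each Hecke relation $(T-\tq^a)(T+\tq^{-a})=0$ and of each braid/commutation relation of $\HH_n$ yields precisely the corresponding relation of the dDAHA with degenerate parameter $a$. Reading off the parameter dictionary of Theorem \ref{maintheoremDAHA} together with the $\det_\tq$-centrality constraint in the final remark of Section 9 gives $k_1=1$, $k_2=p-q-\lambda N$, $k_3=(\lambda-\mu)N$ and $t=\tfrac{2n}{N}+(\lambda+\mu)(q-p)$, as claimed; the affine Hecke statement (Theorem \ref{aff}) follows by restricting to the subalgebra generated by the $T_i$. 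The principal obstacle in this plan is the combinatorial matching in the second paragraph: isolating from the first-order expansion of $T_{(i\cdots 1)}T_0 T_{(1\cdots i)}$ the precise signed sum of transpositions and the $\gamma_i\gamma_k$-terms appearing in \eqref{eqn:dAHAyi}, and separately tracking how the $K_0$-side of the construction contributes to the $k_3\, x_i\gamma_i$ term in relation (v) of Section \ref{dDAHApres}. Both computations require careful bookkeeping of Casimir contributions through the tangle diagrams defining $T_0$ and $K_0$, and the interplay between the shifts coming from the two distinct characters $\chi^\eta_\tau$ and $\lambda^\omega_\nu$.
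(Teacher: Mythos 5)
Your proposal follows essentially the same route as the paper's Section~\ref{sec:degeneration}, which reproves Theorem~\ref{daff} as the quasi-classical limit of Theorem~\ref{maintheoremDAHA}: one degenerates $\B_\sigma,\B'_\rho$ to $\cU(\kk')$, degenerates the characters $\chi^\eta_\tau,\lambda^\omega_\nu$, expands the Hecke generators to first order in $\hbar$, and matches parameters against \eqref{params}. One small slip: the conjugation carrying $\kk'$ to $\kk$ is by the matrix $g$ of \eqref{gdef}, not by $J'$ of \eqref{eqn:J'}.
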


\subsection{The trigonometric degeneration of the DAHA}
In \cite{Ch}, Cherednik defined the dDAHA of a root system as a suitable quasi-classical 
limit of the DAHA.  In this section, we explain how to apply this procedure to the DAHA of type $C^{\vee}C_{n}$ to recover the presentation of the dDAHA in Section \ref{dDAHApres}.  Thus we take $\cK=\CC((\hbar))$ in the definitions of Section \ref{sec:dabgs}

Recall that in \cite{S}, we have a faithful representation of the DAHA of type 
$C^{\vee}C_{n}$ which is given by follows.
Let $\CC[x]=\CC[x_{1}^{\pm}, \ldots, x_{n}^{\pm}]$, with the $BC_{n}$ Weyl group acting by
by permuting and inverting the $x_i$.
Define
\begin{eqnarray*}
&\pi(X_{i}):=&x_{i},\\
&\pi(T_{0})
:=&t_{0}+t^{-1}_{0}\frac{(1-v t_{0}u_{0}x_{1}^{-1})(1+v t_{0}u_{0}^{-1}x_{1}^{-1})}{1-v^{2}x_{1}^{-2}}(s_{0}-1),\\
&\pi(T_{i})
:=&t+t^{-1}\frac{1-t^{2}x_{i}x_{i+1}^{-1}}{1-x_{i}x_{i+1}^{-1}}(s_{i}-1),\\
&\pi(T_{n})
:=&t_{n}+t_{n}^{-1}\frac{(1-t_{n}u_{n}x_{n})(1+t_{n}u_{n}^{-1}x_{n})}{1-x_{n}^{2}}(\gamma_{n}-1),
\end{eqnarray*}
for, $i=1, \ldots, n-1$.
Then we have
\begin{theorem}[\cite{S}, Theorem 3.1, 3.2]\label{faithfulrep}
The map $\pi$ extends to a faithful representation of the $C^\vee C_n$ DAHA
on $\CC[x]$.
\end{theorem}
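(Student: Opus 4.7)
The plan is to verify that the assignments $\pi(T_0),\ldots,\pi(T_n)$ and $\pi(X_i)=x_i$ satisfy all defining relations of $\HH_n$, and then to establish injectivity by a PBW argument combined with a basis of simultaneous eigenvectors for a large commutative subalgebra of $\HH_n$.

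First, I would verify the Hecke relations for each of $T_0,\ldots,T_n$ and for the composite $(vK_0P_1T_0)^{-1}$. Each $\pi(T_i)$ has the Demazure--Lusztig shape $t_i + t_i^{-1} f_i(x)(\sigma_i - 1)$, where $\sigma_i$ is an involution of $\CC[x]$ and $f_i$ is a rational function tailor-made so that a short rank-one calculation (using $(\sigma_i-1)^2 = -2(\sigma_i-1)$) yields $(\pi(T_i) - t_i)(\pi(T_i) + t_i^{-1}) = 0$ identically in $x$. Next, I would verify the braid relations of Definition \ref{defn:affdefn} in groups: the type-$A$ braid and commutativity relations among $T_1,\ldots,T_{n-1}$ reduce to classical identities on $\CC[x_i,x_{i+1}]$; the relations $T_0 T_i = T_i T_0$ for $i\geq 2$ are obvious since the operators act on disjoint variables; and the two quartic relations $T_{n-1} T_n T_{n-1} T_n = T_n T_{n-1} T_n T_{n-1}$ and $T_0 T_1 T_0 T_1 = T_1 T_0 T_1 T_0$ reduce, after clearing denominators, to polynomial identities in two of the $x_i$ and the parameters which can be checked by a finite direct computation.

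Third, I would check the $K_0$ cross relations of Definition \ref{defn:dBdefn}. In the polynomial representation one identifies $K_0$ via the relation $(vK_0P_1T_0)^{-1}\sim u_0$, which lets one express $\pi(K_0)$ as a product of $\pi(T_0)^{-1}$, the $P_1$-conjugate of $\pi(T_n)$, and multiplication by a rescaled $x_1^{\pm 1}$; the cross relations $T_1 K_0 T_1 K_0 = K_0 T_1 K_0 T_1$, $K_0 T_i = T_i K_0$ ($i\geq 2$), and $T_0 T_1^{-1} K_0 T_1 = T_1^{-1} K_0 T_1 T_0$ then reduce to the braid relations already established together with the way the affine reflection $s_0 : x_1 \mapsto v^{-2} x_1^{-1}$ intertwines multiplication by $x_1$. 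Finally, for faithfulness, I would use Cherednik's intertwiner construction to produce a commutative subalgebra $\CC[Y^{\pm 1}] \subset \HH_n$ together with a basis of $\CC[x]$ of joint eigenvectors (the non-symmetric Koornwinder polynomials) whose eigencharacters are generically distinct; combined with the abstract PBW decomposition $\HH_n \cong \CC[X^{\pm 1}] \otimes \CC[\cW_n] \otimes \CC[Y^{\pm 1}]$, the separation of eigencharacters forces $\ker\pi = 0$.

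The main obstacle is the quartic braid relation $T_0 T_1 T_0 T_1 = T_1 T_0 T_1 T_0$: unlike the type-$A$ relations, it couples the affine reflection $s_0$ (sending $x_1 \mapsto v^{-2} x_1^{-1}$) with the transposition $s_1 : x_1 \leftrightarrow x_2$, so the resulting rational-function identity genuinely involves all three of $x_1$, $x_2$, and the inversion, and does not localize to a single pair of variables in any obvious way; the Hecke relation for $T_0$ is the key simplifying identity here. A secondary subtlety is controlling faithfulness at non-generic parameter values, which is handled by a flatness argument propagating from the generic locus on which the Koornwinder eigenbasis argument applies cleanly.
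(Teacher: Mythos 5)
The paper contains no proof of this statement: Theorem~\ref{faithfulrep} is quoted directly from Sahi's paper \cite{S} (his Theorems 3.1 and 3.2), and is invoked as a black box to set up the degeneration computations of Section~\ref{sec:degeneration}. There is thus no internal argument to compare against; your attempt must be judged against the strategy of \cite{S} and its successors.

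With that caveat, your sketch does track the standard route. The quadratic relations are indeed checked via the rank-one computation you indicate, using $(\sigma_i-1)^2 = -2(\sigma_i - 1)$; the braid relations reduce to two-variable rational identities; and faithfulness is proved by exhibiting a joint $\CC[Y^{\pm 1}]$-eigenbasis of $\CC[x]$ (the non-symmetric Koornwinder polynomials) with generically distinct eigencharacters, combined with a PBW decomposition for the DAHA. That is essentially Sahi's argument, which proceeds through the duality anti-involution and intertwiners to produce the eigenbasis and separate eigenspaces. You also correctly identify the two genuinely delicate points: the quartic relation $T_0T_1T_0T_1 = T_1T_0T_1T_0$ coupling the affine inversion with the transposition, and the passage from generic to arbitrary parameter values by flatness.

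One inaccuracy to flag: you propose to ``identify $K_0$ via the relation $(vK_0P_1T_0)^{-1}\sim u_0$.'' A quadratic Hecke relation cannot serve as a definition of an operator. In the presentation used here the correct route is the change-of-generators identity: from \eqref{Xidef} with $i=1$ one reads off $K_0 = X_1^{-1}P_1^{-1}$, hence $\pi(K_0) = x_1^{-1}\,\pi(T_1\cdots T_{n-1}T_n T_{n-1}\cdots T_1)^{-1}$; the Hecke relation $(vK_0P_1T_0)^{-1}\sim u_0$ then becomes another quadratic identity to \emph{verify} (equivalently, the relation $v^{-1}Y_1^{-1}P_1X_1 \sim u_0$ of Corollary~\ref{Hecke}), not a defining equation. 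The rest of the sketch is sound as an outline, though the ``finite direct computation'' for the quartic relations is, as you note yourself, substantial.
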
 

Let $m_1,\ldots m_6\in\CC$, and define the following elements of $\CC[[\hbar]]$:
$$ \tq=e^{\hbar},\,\, t=\tq^{m_1},\,\, t_n=\tq^{m_2},\,\, 
t_0=\tq^{m_3},\,\, u_0=\tq^{m_4},\,\, u_n=\tq^{m_5},\,\, v=\tq^{m_6}.$$

Let $\HH_\hbar$ denote the closed subalgebra of $\End_{\CC[[\hbar]]}(\CC[x_1^{\pm 1},\ldots,x_n^{\pm 1}][[\hbar]])$ generated by the operators in Theorem \ref{faithfulrep}.  As the formulas expressing $X_i,T_0,T_i$ and $T_n$ in terms of the $x_i, s_0, s_i$, and $s_n$ are invertible in $\CC[[\hbar]]$, $\HH_\hbar$ is also generated by the latter set of elements.

\begin{proposition} The natural map on the (lower-case) generators induces an isomorphism $\HH_\hbar / \hbar \HH_{\hbar} \cong \HH^{\deg}(t,k_1,k_2,k_3).$\end{proposition}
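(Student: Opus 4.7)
The plan is to prove the isomorphism by comparing faithful polynomial representations on both sides. By construction $\HH_\hbar\subset \End_{\CC[[\hbar]]}(\CC[x][[\hbar]])$ via $\pi$ of Theorem \ref{faithfulrep}, so $\HH_\hbar/\hbar\HH_\hbar$ acts faithfully on $\CC[x]$. On the degenerate side, $\HH^{\deg}(t,k_1,k_2,k_3)$ admits the classical Dunkl--Cherednik polynomial representation on $\CC[x]$, which is also faithful. The strategy is to identify the first action with the second by expanding $\pi$ to first order in $\hbar$, and thereby produce mutually inverse algebra maps between $\HH^{\deg}$ and $\HH_\hbar/\hbar\HH_\hbar$.

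First I would analyze the reductions modulo $\hbar$ of the lower-case generators. Since $\pi(x_i)$ is multiplication by $x_i$ independently of $\hbar$, it reduces to $x_i$. Setting $t=e^{m_1\hbar}$, $t_n=e^{m_2\hbar}$, $t_0=e^{m_3\hbar}$, etc., and Taylor-expanding the formulas of Theorem \ref{faithfulrep}, each of the rational prefactors involving $t,t_0,t_n,u_0,u_n,v$ specializes to $1$ at $\hbar=0$, so
$$\pi(T_i) = s_i + O(\hbar), \qquad \pi(T_0) = s_0 + O(\hbar), \qquad \pi(T_n) = \gamma_n + O(\hbar).$$
This realizes the Weyl group $\cW_n$, together with the affine reflection $s_0$, inside $\HH_\hbar/\hbar\HH_\hbar$ in the natural way. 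Next, following the standard Cherednik construction, I would form elements $Y_i \in \HH_\hbar$ as specific monomials in $T_0,\ldots,T_n$ and $K_0$, so that $\pi(Y_i) = 1 + \hbar\widetilde{y}_i + O(\hbar^2)$ for some $\widetilde{y}_i \in \End(\CC[x])$. Expanding $\pi(Y_i)$ to first order and retaining the coefficient of $\hbar$ produces an explicit operator on $\CC[x]$; the key content is that this operator agrees, under the stated parameter identifications, with the Dunkl--Cherednik operator that represents $y_i$ in the polynomial representation of $\HH^{\deg}(t,k_1,k_2,k_3)$.

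Granting this identification, I would verify that the defining relations of $\HH^{\deg}$ hold in $\HH_\hbar/\hbar\HH_\hbar$ by expanding the six-parameter DAHA relations to order $\hbar$; this produces a well-defined algebra homomorphism $\HH^{\deg}\to \HH_\hbar/\hbar\HH_\hbar$ on lower-case generators. By the remark preceding the proposition, $x_i,s_i,\gamma_n$, and the $\widetilde{y}_i$ generate $\HH_\hbar/\hbar\HH_\hbar$, so this map is surjective. Injectivity follows because its composition with the embedding $\HH_\hbar/\hbar\HH_\hbar \hookrightarrow \End(\CC[x])$ is precisely the faithful polynomial representation of $\HH^{\deg}$. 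The main obstacle will be the first-order Taylor expansion of $\pi(Y_i)$: one must carefully track the shift, reflection, and constant terms, match them with the Dunkl--Cherednik formula, and thereby read off the correct identifications $t=\tq$, $k_1=m_1$, and the expression of $k_2,k_3$ in terms of $m_2,\ldots,m_6$. A minor secondary subtlety is that the presentation of $\dB_n$ used in Definition \ref{defn:dBdefn} differs from Sahi's original one, so the elements $Y_i$ must be re-expressed in terms of $T_0,\ldots,T_n$ and $K_0$ with some care before the expansion is carried out.
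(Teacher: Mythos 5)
The paper's own proof is explicitly omitted (``by a direct computation, which we omit''), so there is not much text to compare against; your sketch is a reasonable way to carry out that computation and, beyond that, correctly supplies the structural reasoning that turns the computation into an isomorphism rather than merely a homomorphism. Expanding Sahi's faithful representation to first order in $\hbar$, identifying the coefficient of $\hbar$ in $\pi(Y_i)$ with the Dunkl--Cherednik operator for $y_i$, and then using surjectivity on generators plus injectivity via the faithful polynomial representation is the standard and correct route, and is consistent with the parameter dictionary the paper records in \eqref{params}.

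Two caveats. First, your opening sentence asserts that $\HH_\hbar/\hbar\HH_\hbar$ acts faithfully on $\CC[x]$. This is not automatic from $\HH_\hbar \subset \End_{\CC[[\hbar]]}(\CC[x][[\hbar]])$: it requires $\HH_\hbar$ to be \emph{saturated} (i.e.\ $\hbar a\in\HH_\hbar \Rightarrow a\in\HH_\hbar$), and the phrase ``closed subalgebra generated by'' in the paper does not obviously guarantee this --- in particular, elements such as $(Y_i-1)/\hbar$, which reduce to the Dunkl--Cherednik operators $\widetilde{y}_i$, need to be in $\HH_\hbar$ for your surjectivity claim (``$x_i,s_i,\gamma_n$ and the $\widetilde{y}_i$ generate $\HH_\hbar/\hbar\HH_\hbar$'') to make sense, since the reductions of $x_i, s_0, s_i, s_n$ alone only generate $\CC[\cW_n]\ltimes\CC[x^{\pm}]$. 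The reasonable reading is that $\HH_\hbar$ is taken to be the $\CC[[\hbar]]$-lattice obtained by intersecting the $\CC((\hbar))$-algebra generated by the operators with $\End_{\CC[[\hbar]]}(\CC[x][[\hbar]])$; with that reading your argument is clean, but you should say so, as the paper is silent on the point. Note also that your injectivity argument would actually work without knowing that the map $\HH_\hbar/\hbar\HH_\hbar \to \End_\CC(\CC[x])$ is injective --- it only uses that the composite $\HH^{\deg}\to\HH_\hbar/\hbar\HH_\hbar\to\End_\CC(\CC[x])$ is the faithful polynomial representation --- so you could decouple that claim from the part of the argument where it matters. Second, a small slip: you write ``$t=\tq$'' among the parameter identifications, but the dDAHA parameter $t$ corresponds to $m_2+m_3+m_6$ (it is $k_1=m_1$ that matches the single Hecke parameter $\tq=e^{m_1\hbar}$); see \eqref{params}.
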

\begin{proof}
By a direct computation, which we omit, it can be seen that the relations of the 
$C^{\vee}C_{n}$ type DAHA degenerate to the relations in the type $BC_{n}$ 
degenerate double affine Hecke algebra. The parameter correspondence is given by
\begin{eqnarray}
k_{1}=m_{1},\,\, k_{2}=m_{2},\,\, k_{3}=m_{3}=m_{4}+m_{5},\,\,
t=m_{2}+m_{3}+m_6. \label{params}
\end{eqnarray}
\end{proof}

\subsection{The trigonometric degeneration of $\B_\sigma$}\label{Bdegsec}

In this subsection, we let $\sigma\in\CC$, and define the power series 
$$\tq:=e^{\hbar},\qquad \tq^{\sigma}:=e^{\hbar\sigma}\in\CC[[\hbar]].$$  
In this way the algebras $\cU_\tq(\g)$ and $\B_\sigma$ 
considered throughout become $\CC[[\hbar]]$-algebras.

Recall that a $\CC[[\hbar]]$-subalgebra $\B$ of a $\CC[[\hbar]]$-algebra $\A$ is called 
\emph{saturated} if $\hbar a\in \B \Rightarrow a\in \B$.  
The saturation $\B^s$ of $\B$ is the smallest saturated subalgebra containing $\B$.  
The quasi-classical limit of a saturated subalgebra $\B\subset \A$ is the subalgebra $\B/\hbar \B$ 
of $\A /\hbar \A$.  The following is an elaboration of \cite{DS}, Remark 6.4:
\begin{claim} 
For all $\sigma\in\CC$, the quasi-classical limit of the subalgebra 
$\B^s_\sigma$ is $\cU(\kk')$, where $\kk'$ is the subalgebra of $\mathfrak{gl}_{N}$ 
defined in Section \ref{sec:symmpair}.
\end{claim}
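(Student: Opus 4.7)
The plan is to expand the generators $c_{il}$ of $\B_\sigma$ in powers of $\hbar = \log\tq$, extract the leading nontrivial coefficient via saturation, and identify it with a spanning set for $\kk'$. Under the Drinfeld--Jimbo specialization $\tq = e^\hbar$, the algebra $\cU_\tq(\g)$ is a flat $\CC[[\hbar]]$-deformation of $\cU(\g)$, with universal $R$-matrix $\cR = 1 + \hbar r + O(\hbar^2)$, where $r$ is the classical $r$-matrix of $\g = \mathfrak{gl}_N$. Correspondingly, the $L$-operators admit expansions $L^+ = I + \hbar r_L + O(\hbar^2)$ and $S(L^-) = I + \hbar r_R + O(\hbar^2)$, where $r_L := (\id\ot\rho_V)(r)$ and $r_R := (\rho_V\ot\id)(r)$ are viewed as $N\times N$ matrices with entries in $\g$. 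From \eqref{eqn:Jsigma}, $J^\sigma = J' + \hbar C + O(\hbar^2)$, with $J'$ the classical involution matrix \eqref{eqn:J'} and $C$ a diagonal \emph{scalar} matrix (entries $2\sigma$ on the first $p$ slots and $\sigma$ on the middle $q-p$ slots).

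Substituting these expansions into the definition of $c_{il}$ produces
$$c_{il} = J'_{il} + \hbar\bigl[(r_L J')_{il} + C_{il} + (J' r_R)_{il}\bigr] + O(\hbar^2).$$
Since $J'_{il}$ is a scalar (hence already in $\B^s_\sigma$), the difference $c_{il} - J'_{il}$ lies in $\hbar\cU_\tq(\g)$, and saturation then places
$$\tilde c_{il} := (r_L J')_{il} + (J' r_R)_{il} + C_{il} \in \cU(\g)$$
in the quasi-classical limit $\B^s_\sigma/\hbar\B^s_\sigma$.

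The central step is to identify $\mathrm{span}_\CC\{\tilde c_{il}\}$ with $\kk' + \CC\cdot 1$. Writing $r = \tfrac12\Omega + r_{\mathrm{skew}}$, where $\Omega = \sum_{a,b} E_a^b\ot E_b^a$ is the split Casimir, a short direct matrix computation should show that the $(i,l)$-entry of $r_L J' + J' r_R$, modulo scalar terms which can be absorbed into $C$, equals $E_i^l + \theta'(E_i^l)$ as an element of $\g$, with $\theta' = \mathrm{Ad}(J')$. A direct check from \eqref{eqn:J'} gives $(J')^2 = I_N$, so $\theta'$ is an involution and the symmetrization $x\mapsto x + \theta'(x)$ surjects onto the $(+1)$-eigenspace $\kk'$. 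Thus the $\tilde c_{il}$ linearly span $\kk'$ modulo scalars, and the associative subalgebra of $\cU(\g)$ they generate is precisely $\cU(\kk')$, yielding the inclusion $\cU(\kk')\subseteq\B^s_\sigma/\hbar\B^s_\sigma$.

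The reverse inclusion comes from flatness: Proposition \ref{Bsigmaisom} already gives $\B_\sigma$ a PBW-type basis of ordered monomials in $p^2 + q^2$ generators, exactly matching $\dim \kk' = p^2 + q^2$, so no additional generators can appear in the limit. The main obstacle I expect is the explicit bookkeeping in the central identification: the skew part $r_{\mathrm{skew}}$ of the classical $r$-matrix produces scalar cross-terms in $(r_L J')_{il} + (J' r_R)_{il}$ that must be carefully tracked and shown to be absorbed by the diagonal entries of $C$ (or to cancel outright), leaving only the Casimir-derived contribution, which is precisely the $\theta'$-symmetrization of the matrix unit $E_i^l$.
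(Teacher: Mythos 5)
Your overall strategy is the same as the paper's: expand the generators $c_{il}=\sum_{j,k}l^+_{ij}(J^\sigma)_{jk}S(l^-_{kl})$ to first order in $\hbar$, identify the first-order coefficients (which become, after saturation, the generators of the limit), and check that these span $\kk'$ modulo scalars. The paper carries this out by a direct entry-by-entry calculation broken into six cases according to the block structure of $J^\sigma$; you repackage the same computation abstractly via $\cR=1+\hbar r+O(\hbar^2)$ and $J^\sigma=J'+\hbar C+O(\hbar^2)$, which is a perfectly reasonable point of view but does not avoid the bookkeeping.

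The gap is precisely in the step you flag as the ``main obstacle'': the identification of $(r_LJ')_{il}+(J'r_R)_{il}$ with $E_i^l+\theta'(E_i^l)$ modulo scalars. This identity is not established, and it is in fact incorrect as stated. For example, in the range $1\le i<l\le p$ the paper's Case 1a gives $E_{N-l+1}^i+E_l^{N-i+1}$, which is the $\theta'$-symmetrization of $E_{N-l+1}^i$ (equivalently of $E_l^{N-i+1}$), not of $E_i^l$. The reason is that $L^+$ is upper-triangular and $L^-$ lower-triangular, while $J'$ is (skew-)anti-diagonal on the outer blocks: multiplying $r_L$ on the right by $J'$ swaps the column index $l\mapsto N-l+1$, and multiplying $r_R$ on the left by $J'$ swaps the row index $i\mapsto N-i+1$. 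So the correct statement is that, up to sign and normalization, the quasi-classical limit of $c_{il}$ is the $\theta'$-symmetrization of the \emph{reindexed} matrix unit $E_l^{N-i+1}$ (or $\pm E_l^i$ when $i$ lies in the middle block). The spanning conclusion survives, since $(i,l)\mapsto(l,N-i+1)$ is a bijection of index pairs, but the proof requires actually verifying this for each block of indices — including the diagonal entries of the middle block, the anti-diagonal entries $l=N-i+1$, and the $\sigma$-dependent diagonal matrix $C$, all of which the paper treats separately. Until that verification is done (and the conjectured formula corrected), the argument is a plan, not a proof.

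One secondary caution: your ``reverse inclusion'' appeals to Proposition \ref{Bsigmaisom} for a PBW-type basis of $\B_\sigma$. What that Proposition's proof establishes independently of the present Claim is only that ordered monomials in the $c_{il}$ \emph{span} $\B_\sigma$ over $\CC[[\hbar]]$ (linear independence is deduced there by invoking the present Claim and flatness). Spanning is all that is needed to conclude that the quasi-classical limit of $\B^s_\sigma$ is generated by the first-order terms of the $c_{il}$, so your argument is salvageable; but as written, the dimension count ``$p^2+q^2$ generators, exactly matching $\dim\kk'$'' implicitly assumes linear independence and risks circularity. It is cleaner to argue, as the paper does, that the limit algebra is generated by the limits of the $c_{il}$, and then that these limits lie in and span $\kk'$ modulo scalars.
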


\begin{proof}
As remarked in the proof of Proposition \ref{Bsigmaisom}, 
the relations of the reflection equation algebra imply that $\B_\sigma$ is spanned over 
$\CC[[\hbar]]$ by ordered monomials in the $c_{il}$, and therefore its saturation $\B_\sigma^s$ is a saturated subalgebra whose quasi-classical limit is generated by the quasi-classical limits of the generators $c_{il}$.  
Thus it remains only to compute the quasi-classical limits of the 
$c_{il}$ and check that they coincide with the generators of $\cU(\kk')$.

We recall the formula for the generators $c_{il}$:
$$c_{il}=\sum_{j,k=1}^{m}l_{ij}^{+}(J_{V})_{jk}S(l_{kl}^{-}). $$

The classical limits of each $l^{\pm}_{ij}$ are $\delta_{ij}$.  We recall the well-known formulas for the quasi-classical limits of the $l^{\pm}_{ij}$:
$$\quad\lim_{\tq\to 1}\dfrac{l_{ij}^{\pm}}{\tq-\tq^{-1}}=-\lim_{\tq\to 1}\dfrac{S(l_{ij}^{\pm})}{\tq-\tq^{-1}}=\pm E_j^i, 
\text{ for } i\neq j;$$
$$\quad\quad\lim_{\tq\to 1}\dfrac{2(l_{ii}^{+}-l_{jj}^{-})}{\tq-\tq^{-1}}=E_i^i+E_j^j.$$

The only terms in the summation expression for $c_{il}$ which will contribute to the quasi-classical limit are those in which either $i=j$ or $k=l$; in all other cases, the term will vanish to second order in $\hbar$, and thus its quasiclassical limit will be zero.  We have six cases to compute, according to the block form of $J^{\sigma}$.

\noindent\textbf{Case 1a: $1\leq i<l\leq p$.}
\begin{align*}
\lim_{\tq\to 1}\frac{c_{il}}{\tq-\tq^{-1}}
&=\lim_{\tq\to 1}\frac{
l_{i,N-l+1}^{+}S(l_{ll}^{-})+l_{ii}^{+}S(l_{N-i+1,l}^{-})}{\tq-\tq^{-1}}\\
&=E_{N-l+1}^i+E_l^{N-i+1};
\end{align*} 
\noindent \textbf{Case 1b: $1\leq l<i\leq p$.}
\begin{align*}
\lim_{\tq\to 1}\frac{c_{il}}{\tq-\tq^{-1}}
&=\lim_{\tq\to 1}\frac{
l_{i,i}^{+}S(l_{N-i+1,l}^{-})+l_{i,N-l+1}^{+}S(l_{l,l}^{-})}{\tq-\tq^{-1}}\\
&=E_l^{N-i+1}+E_{N-l+1}^i;
\end{align*} 
\noindent\textbf{Case 1c: $1\leq i=l\leq p$.}
\begin{align*}
\lim_{\tq\to 1}\frac{c_{ii}}{\tq-\tq^{-1}}
&=\lim_{\tq\to 1}\frac{l_{ii}^{+}S(l_{ii}^{-})(\tq^{\sigma}-\tq^{-\sigma})+l_{i,i}^{+}S(l_{N-i+1,i}^{-})
+l_{i,N-i+1}^{+}S(l_{i,i}^{-})}{\tq-\tq^{-1}}\\
&=\sigma+E_i^{N-i+1}+E_{N-i+1}^i;
\end{align*} 
\noindent\textbf{Case 2: $1\leq i\leq p$, $p+1\leq l\leq N-p$.}
\begin{align*}
\lim_{\tq\to 1}\frac{c_{il}}{\tq-\tq^{-1}}
&=\lim_{\tq\to 1}\frac{
l_{i,i}^{+}S(l_{N-i+1,l}^{-}) - \tq^{-\sigma} l_{i,l}^{+}S(l_{l,l}^{-})
}{\tq-\tq^{-1}}\\
&=E_l^{N-i+1}-E_l^i;
\end{align*}
\noindent\textbf{Case 3a: $N-p+1\leq l\leq N$, $1\leq  i <N-l+1$.}
\begin{align*}
\lim_{\tq\to 1}\frac{c_{il}}{\tq-\tq^{-1}}
&=\lim_{\tq\to 1}\frac{
l_{i,i}^{+}S(l_{N-i+1,l}^{-})
+l_{i,N-l+1}^{+}S(l_{l,l}^{-})}{\tq-\tq^{-1}}\\
&=E_l^{N-i+1}+E_{N-l+1}^i;
\end{align*} 
\noindent\textbf{Case 3b: $N-p+1\leq l\leq N$, $i=N-l+1$.}
\begin{align*}
\lim_{\tq\to 1}\frac{2-2c_{il}}{\tq-\tq^{-1}}
&=\lim_{\tq\to 1}\frac{2
\left(1-l_{i,i}^{+}S(l_{N-i+1,N-i+1}^{-})\right)}{\tq-\tq^{-1}}
\\
&=\lim_{\tq\to 1}\frac{
2(l_{N-i+1,N-i+1}^- -l_{i,i}^{+})S(l_{N-i+1,N-i+1}^{-})
}{\tq-\tq^{-1}}\\
&=-E_{N-i+1}^{N-i+1}-E_i^i;
\end{align*}
\noindent\textbf{ Case 4: $1\leq l\leq p$, $p+1\leq i\leq N-p$.}
\begin{align*}
\lim_{\tq\to 1}\frac{c_{il}}{\tq-\tq^{-1}}
&=\lim_{\tq\to 1}\frac{
-\tq^{-\sigma}l_{i,i}^{+}S(l_{i,l}^{-})
+l_{i,N-l+1}^{+}S(l_{l,l}^{-})}{\tq-\tq^{-1}}\\
&=E_{N-l+1}^i-E_l^i;
\end{align*} 
\noindent\textbf{Case 5a: $p+1\leq i< l\leq N-p$.}
\begin{align*}
\lim_{\tq\to 1}\frac{c_{il}}{\tq-\tq^{-1}}
&=\lim_{\tq\to 1}\frac{
\tq^{-\sigma}l_{i,l}^{+}S(l_{l,l}^{-})}{\tq-\tq^{-1}}
=E_l^i;
\end{align*}   
\noindent\textbf{ Case 5b: $p+1\leq i=l\leq N-p$.}
\begin{align*}
\lim_{\tq\to 1}\frac{\tq^{-\sigma}+c_{ii}}{\tq-\tq^{-1}}
&=\lim_{\tq\to 1}\frac{
\tq^{-\sigma}-\tq^{-\sigma}l_{i,i}^{+}S(l_{i,i}^{-})}{\tq-\tq^{-1}}\\
&=\lim_{\tq\to 1}\frac{
\tq^{-\sigma}(l_{i,i}^{-}-l_{i,i}^{+})S(l_{i,i}^{-})}{\tq-\tq^{-1}}\\
&=-E_i^i;
\end{align*}
\noindent\textbf{Case 5c: $p+1\leq  l<i\leq N-p$.}
\begin{align*}
\lim_{\tq\to 1}\frac{c_{il}}{\tq-\tq^{-1}}
&=\lim_{\tq\to 1}\frac{
-\tq^{-\sigma}l_{i,i}^{+}S(l_{i,l}^{-})}{\tq-\tq^{-1}}=-E_l^i;
\end{align*} 
\noindent\textbf{Case 6a: $N-p+1\leq i\leq N$, $1\leq l <N-i+1$.}
\begin{align*}
\lim_{\tq\to 1}\frac{c_{il}}{\tq-\tq^{-1}}
&=\lim_{\tq\to 1}\frac{
l_{i,i}^{+}S(l_{N-i+1,l}^{-})
+l_{i,N-l+1}^{+}S(l_{l,l}^{-})}{\tq-\tq^{-1}}\\
&=E_l^{N-i+1}+E_{N-l+1}^i;
\end{align*} 
\noindent\textbf{Case 6b: $N-p+1\leq i\leq N$, $l=N-i+1$.}
\begin{align*}
\lim_{\tq\to 1}\frac{2-2c_{il}}{\tq-\tq^{-1}}
&=\lim_{\tq\to 1}\frac{2
\left(1-l_{i,i}^{+}S(l_{N-i+1,N-i+1}^{-})
\right)}{\tq-\tq^{-1}}\\
&=\lim_{\tq\to 1}\frac{2
(l_{N-i+1,N-i+1}^{-}-l_{i,i}^{+})S(l_{N-i+1,N-i+1}^{-})
}{\tq-\tq^{-1}}\\
&=-E_i^i-E_{N-i+1}^{N-i+1}.
\end{align*}
Finally, we let 
\begin{equation}\label{gdef}g=\sum_{k=1}^{p}E_k^k
-\sum_{k=p+1}^{n}E_k^k+ 
\sum_{k=1}^{p}E_{n-k+1}^k+\sum_{k=1}^{p}
E_k^{n-k+1}\end{equation}
and conjugate each of the above elements by $g$. We have
\begin{align*}
g(E_{N-l+1}^i+E_l^{N-i+1})g^{-1} &= E_l^i-E_{N-l+1}^{N-i+1}, \,\,\text {in Case 1a};\\
g(E_l^{N-i+1}+E_{N-l+1}^i)g^{-1} &= E_l^i-E_{N-l+1}^{N-i+1}, \,\,\text{ in Case 1b};\\
\sigma+g(E_i^{N-i+1}+E_{N-i+1}^i)g^{-1} &=
\sigma+ E_i^i-E_{N-i+1}^{N-i+1},\,\,\text{ in Case 1c};\\
g(E_l^{N-i+1}-E_l^i)g^{-1} &= E_l^{N-i+1}, \,\,\text{ in Case 2}; \\
g(E_l^{N-i+1}+E_{N-l+1}^i)g^{-1} &= E_l^{N-i+1}+E_{N-l+1}^i,\,\, \text{ in Case 3a};\\
g(-E_i^i-E_{N-i+1}^{N-i+1})g^{-1} &= -E_i^i-E_{N-i+1}^{N-i+1},\,\, \text{ for Cases 3b and 6b};\\
g(E_{N-l+1}^{i}-E_{l}^i)g^{-1} &= 2E_{N-l+1}^i, \,\,\text{in Case 4};\\
g(E_l^i)g^{-1}&=E_l^i, \,\,\text{ in Cases 5a, b and c};\\
g(E_l^{N-i+1}+E_{N-l+1}^i)g^{-1} &= E_l^{N-i+1}+E_{N-l+1}^i,\,\, \text{ in Case 6a};\\
\end{align*} 
Thus we see by direct inspection that the quasi-classical limit of the subalgebra $\B_\sigma$ is the algebra $\cU(\kk')$.
\end{proof}

\subsection{The trigonometric degeneration of the character $\chi_\tau^\eta$.}
By trigonometric degeneration of a character $\chi:\B_\sigma\to\CC$ we will mean the following:  first we work over $\CC[[\hbar]]$, and set $\tq=e^{\hbar}$, $\tq^\sigma=e^{\hbar\sigma}$.  We thus view $\chi$ as a homomorphism to $\CC[[\hbar]]$ instead.  We send $a\in \B^s_\sigma/\hbar \B^s_\sigma$ to $\chi(\hat{a}) \mod \hbar$ for any lift $\hat{a}$ of $a$.

We now apply the explicit computations above to compute the trigonometric degeneration of the characters $\chi_\tau^\eta$.  In order to be compatible with the conventions of \cite{EFM}, we will consider the character $\tilde\chi_\tau^\eta: \mathfrak{gl}_p\times\mathfrak{gl}_q \to \mathfrak{k}'\to\CC$, obtained by precomposing with conjugation by $g^{-1}$, and applying the quasi-classical limit of the character $\chi_\tau^\eta:\B^s_\sigma\to\CC$.  We compute that:

$$\tilde{\chi}^\eta_\tau(\left(\begin{array}{cc}A_1 & 0 \\0 & A_2\end{array}\right))=\frac{\eta+\tau-\sigma}{2}\tr A_{1}+\frac{\eta+\sigma-\tau}{2}\tr A_{2}.$$
Thus, we have that $$\tilde\chi^{\eta}_\tau=(\frac{\eta}{2}+\frac{(p-q)(\tau-\sigma)}{2N})\tr + \frac{(\tau-\sigma)}{N}\chi,$$ where $\chi$ is that from equation \eqref{chieq}.

Similarly, we can compute the character $\tilde\lambda_\nu^\omega:\mathfrak{gl}_p\times\mathfrak{gl}_q\to\CC$ 
obtained from $\lambda_\nu^{\omega}$ by quasi-classical limit. We have
$$\tilde\lambda_\nu^\omega=(\frac{\omega}{2} +\frac{(p-q)(\rho-\nu)}{2N})\tr + \frac{(\rho-\nu)}{N}\chi.$$

\subsection{An alternate presentation for the DAHA}{\label{sec:newgen}}
In this section, we recall an alternate presentation for the DAHA (e.g. \cite{S},\cite{EGO}), and prove that it coincides with Definition \ref{defn:dBdefn}.

Let $[a,b]$ denote the set of integers between $a$ and $b$ inclusive, regardless of which is larger.  Recall the elements $T_{(i\cdots j)}$ and $P_i$ from Section \ref{DABGsec}.  By direct computation, we have the following:
\begin{lemma}{\label{lem:Tij}}
We have the following relations: 
$$T_{(i\cdots j)}T_{(k\cdots l)}=
\left\{\begin{array}{ccc}T_{(k \cdots l)}T_{(i\cdots j)}, &  & [i,j] \cap [k, l]=\emptyset,\\
T_{(k \cdots l)}T_{(i+1\cdots j+1)}, &  & [i,j] \subsetneq [k,l],k>l,\\
T_{(k \cdots l)}T_{(i-1\cdots j-1)}, &  & [i,j] \subsetneq [k,l],k<l,
\end{array}\right.$$ 
$$T_{i}P_{i+1}T_i=P_{i}, \quad \quad T_{i}P_{j}=P_{j}T_{i} \quad(j\neq i, i+1),$$
$$P_{i}P_{j}=P_{j}P_{i},\quad i,j=1, \ldots, n-1.$$
\end{lemma}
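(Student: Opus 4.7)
The entire lemma sits inside the affine braid group $\hB_n$, so I would prove it by pure word manipulation using only the braid relations \eqref{eqn:braid} and \eqref{eqn:braid0n}. I would treat the four families of relations in the order they are listed, each later one exploiting the earlier ones.

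For the first family, the disjoint-interval case $[i,j]\cap[k,l]=\emptyset$ is immediate: every generator $T_a$ appearing in $T_{(i\cdots j)}$ and every $T_b$ in $T_{(k\cdots l)}$ satisfies $|a-b|\geq 2$, so they commute pairwise. For the strict-containment sub-cases I would induct on the length of $T_{(k\cdots l)}$, peeling off its outermost letter (the leftmost $T_{k-1}$ if $k>l$, or $T_k$ if $k<l$). The remaining word still strictly contains $[i,j]$, so the inductive hypothesis applies; the peeled letter then commutes through the shifted $T_{(i\pm1\cdots j\pm1)}$ using only $|a-b|\geq 2$ commutations, while the boundary collision producing the index shift is a single use of $T_aT_{a+1}T_a = T_{a+1}T_aT_{a+1}$. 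The base case, with $|k-l|=3$ and $T_{(i\cdots j)}$ a single letter, is precisely this braid relation.

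The identity $T_iP_{i+1}T_i = P_i$ is immediate, since the definition of $P_i$ already factors in this form. For $T_iP_j = P_jT_i$ with $j\ne i,i+1$, the case $j>i+1$ is immediate because $T_i$ commutes with every letter appearing in $P_j$. The substantive case is $j<i$, which I would handle by sliding $T_i$ through
$$P_j = T_j\cdots T_{i-1}T_iT_{i+1}\cdots T_n\cdots T_{i+1}T_iT_{i-1}\cdots T_j$$
in five stages: commute $T_i$ past the initial block $T_j,\ldots,T_{i-2}$ entry-by-entry; apply the braid $T_iT_{i-1}T_i = T_{i-1}T_iT_{i-1}$; push the liberated $T_{i-1}$ past the central block $T_{i+1}\cdots T_n\cdots T_{i+1}$ entry-by-entry; apply the braid $T_{i-1}T_iT_{i-1} = T_iT_{i-1}T_i$ on the right; finally commute the trailing $T_i$ out past $T_{i-2},\ldots,T_j$. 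In the edge case $i=n$ the central block degenerates to a single $T_n$ and the two type-$A$ braid moves above are replaced by one use of the type-$C$ braid $T_{n-1}T_nT_{n-1}T_n = T_nT_{n-1}T_nT_{n-1}$; this bookkeeping is the step I expect to require the most care.

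Finally, for $P_iP_j = P_jP_i$ with $i<j$, I would use the regrouping $P_i = T_{(i\cdots j)}P_jT_{(j\cdots i)}$, which is immediate from the definitions. Every letter appearing in $T_{(i\cdots j)}$ or $T_{(j\cdots i)}$ is $T_k$ with $i\le k\le j-1$, hence $k\ne j,j+1$, and by the relation just proved these factors commute with $P_j$. Cancellation then gives
$$P_iP_j = T_{(i\cdots j)}P_jT_{(j\cdots i)}P_j = T_{(i\cdots j)}P_j^2T_{(j\cdots i)} = P_jT_{(i\cdots j)}P_jT_{(j\cdots i)} = P_jP_i.$$
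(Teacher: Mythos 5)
The paper states this lemma with no proof at all beyond the phrase ``By direct computation,'' so your task is to supply the computation, and your outline does so correctly in the same spirit.

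Two small remarks on precision. First, in your induction for the first family, the claim ``The remaining word still strictly contains $[i,j]$, so the inductive hypothesis applies'' is not literally true in the boundary case where $\max(i,j)=\max(k,l)-1$ (for $k>l$): after peeling $T_{k-1}$, the interval $[i,j]$ shares an endpoint with $[l,k-1]$. You do flag this as the ``boundary collision,'' but it means the reduction is not a clean appeal to the same inductive statement. A tidier route is to first prove the single-letter case $T_aT_{(k\cdots l)}=T_{(k\cdots l)}T_{a+1}$ for $l\le a\le k-2$ ($k>l$) by induction on $k-l$ — the boundary case $a=k-2$ is then exactly one type-$A$ braid move plus commutations — and the general case follows by applying this letter-by-letter to $T_{(i\cdots j)}$, which never runs into a bookkeeping issue. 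Second, the statement ``$[i,j]\subsetneq[k,l]$'' in the lemma is a touch imprecise (taken literally it admits intervals sharing the endpoint in the direction of the shift, where the relation fails — e.g.\ $i=2,j=4,k=4,l=1$); the correct hypothesis is that the endpoint on the side being shifted toward is strictly interior. This is an imprecision inherited from the paper's statement, not a defect in your argument, but it is worth noting since you invoke ``strict containment'' verbatim in the induction.

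The remaining three families are argued cleanly: $T_iP_{i+1}T_i=P_i$ by inspection; $T_iP_j=P_jT_i$ by the five-stage slide for $j<i$ with the correct observation that $i=n$ replaces the two type-$A$ braids by one type-$C^\vee$ braid; and $P_iP_j=P_jP_i$ via the regrouping $P_i=T_{(i\cdots j)}P_jT_{(j\cdots i)}$ together with the just-proved commutation. These parts are correct as written.
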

Consider the following elements:
\begin{align}\label{eqn:Yidef}Y_{i}&:=P_iT_{(i\cdots1)}T_0T_{(i\cdots 1)}^{-1},\\
X_i&:= P_i^{-1}T_{(1\cdots i)}^{-1}K_0^{-1}T_{(1\cdots i)}\label{Xidef}.
\end{align}

\begin{proposition} \label{prop:newgen} $\dB_n$ is generated by the group $\cB_n$ and elements $X_1,\ldots, X_n,$ $Y_1,\ldots Y_n,$ with the relations:\small
$$T_iY_{i+1}T_i=Y_i,\,\, T_iX_iT_i=X_{i+1},\,\, X_iX_j=X_jX_i, Y_iY_j=Y_jY_i \,\, (i,j=1,\ldots,n),$$
$$T_iY_j=Y_jT_i, T_iX_j=X_jT_i\,\, (j\neq i, i+1), \,\, T_nY_{n-1}=Y_{n-1}T_n, T_nX_{n-1}=X_{n-1}T_n,$$
$$X_i(P_1^{-1}Y_1)=(P_1^{-1}Y_1)X_i\,\, (i=2,\ldots,n-1).$$\normalsize
\end{proposition}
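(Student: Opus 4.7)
The plan is to establish mutually inverse homomorphisms between $\dB_n$ (as presented in Definition \ref{defn:dBdefn}) and the abstract group $G$ generated by $T_1,\ldots,T_n,X_1,\ldots,X_n,Y_1,\ldots,Y_n$ subject to the relations listed in the statement. Together with an inspection on generators, this yields the desired presentation.

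First I would define $\phi: G \to \dB_n$ by sending $T_i$ to itself and $X_i, Y_i$ to the explicit elements \eqref{eqn:Yidef} and \eqref{Xidef} of $\dB_n$. To check $\phi$ is well-defined, I would verify each listed relation using Lemma \ref{lem:Tij}. The identity $T_iP_{i+1}T_i = P_i$, together with the telescoping $T_{(i+1\cdots 1)} = T_i T_{(i\cdots 1)}$ and $T_{(1\cdots i+1)} = T_{(1\cdots i)}T_i$, immediately gives $T_iY_{i+1}T_i = Y_i$ and $T_iX_iT_i = X_{i+1}$. The remaining cross-commutations $T_iY_j = Y_jT_i$ and $T_iX_j = X_jT_i$ for $j\ne i,i+1$ (and analogously for $T_n, X_{n-1}, Y_{n-1}$) reduce to $T_i P_j = P_j T_i$ together with $T_i T_{(k\cdots l)} = T_{(k\cdots l)} T_i$ under appropriate disjointness of index intervals -- both supplied by Lemma \ref{lem:Tij} -- and the original relations of $\dB_n$. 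The commutativities $X_iX_j = X_jX_i$ and $Y_iY_j = Y_jY_i$ would be reduced, by repeated conjugation, to the Hecke-type relations $T_1K_0T_1K_0 = K_0T_1K_0T_1$ and $T_0T_1T_0T_1 = T_1T_0T_1T_0$ in $\dB_n$. Finally, the mixed relation $X_i(P_1^{-1}Y_1) = (P_1^{-1}Y_1)X_i$ unwinds (after noting $P_1^{-1}Y_1 = T_0$) to $T_0 X_i = X_i T_0$; the $i=2$ case is essentially \eqref{T0T11K0T1reln}, and cases $i\geq 3$ follow by conjugating that relation by $T_j$'s commuting with both sides.

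Conversely, I would define $\psi:\dB_n\to G$ by $T_i\mapsto T_i$, $T_0\mapsto P_1^{-1}Y_1$, and $K_0\mapsto X_1^{-1}P_1^{-1}$ (these are forced by specializing \eqref{eqn:Yidef} and \eqref{Xidef} to $i=1$). The key step is checking that the defining relations of $\dB_n$ hold in $G$ after substitution. Commutation $K_0 T_i = T_i K_0$ for $i\ge 2$ reduces to $T_iX_1 = X_1T_i$ and $T_iP_1^{-1} = P_1^{-1}T_i$, both available. The affine braid relation $T_0T_1T_0T_1 = T_1T_0T_1T_0$, after substituting $T_0 = P_1^{-1}Y_1$ and using $T_1P_1^{-1}T_1 = P_2^{-1}$ together with $Y_2 = T_1^{-1}Y_1T_1^{-1}$ (the $i=1$ case of $T_iY_{i+1}T_i = Y_i$, solved for $Y_{i+1}$), collapses to $Y_1Y_2 = Y_2Y_1$; similarly $T_1K_0T_1K_0 = K_0T_1K_0T_1$ collapses to $X_1X_2 = X_2X_1$. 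The remaining relation \eqref{T0T11K0T1reln} becomes (the $i=2$ case of) $X_iT_0 = T_0X_i$ in $G$ after rewriting $T_1^{-1}K_0T_1$ in terms of $X_2$ and $P_2$.

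Finally, a direct inspection shows $\phi\circ\psi$ and $\psi\circ\phi$ fix the generators of each group (e.g.\ $\phi(\psi(T_0)) = P_1^{-1}\cdot P_1T_0 = T_0$; $\psi(\phi(Y_i))$ is $Y_i$ by construction), so $\phi$ and $\psi$ are mutually inverse. The main obstacle is neither conceptual nor technically deep, but rather bookkeeping: one must carefully translate the affine braid relations of $\dB_n$ into the commutativity relations of the $X_i$'s and $Y_i$'s, propagating $T_i$'s through products of $P_j$'s and $T_{(k\cdots l)}$'s using Lemma \ref{lem:Tij} and the Lusztig-type relations $T_iY_{i+1}T_i = Y_i,\ T_iX_iT_i = X_{i+1}$, while keeping track of the special index conditions (the distinct roles of $T_n$ and the range $2\le i\le n-1$ in the final $G$-relation). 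Modulo this combinatorial care, every required identity reduces to the stated relations of the other presentation.
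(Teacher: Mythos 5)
Your proposal is correct and takes essentially the same approach as the paper: the paper simply writes down the map $\phi$ (with the same formulas you use) and says ``we leave it to the reader to verify that $\phi$ defines an isomorphism,'' so you are doing the verification the paper elides. Your construction of the inverse $\psi$ via $T_0 \mapsto P_1^{-1}Y_1$, $K_0 \mapsto X_1^{-1}P_1^{-1}$ and your reductions (e.g.\ the mixed relation collapsing to $T_0 X_i = X_i T_0$, the $Y$-commutativity to the affine braid relation) are the intended steps, with the remaining combinatorics using Lemma \ref{lem:Tij} exactly as you say.
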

\begin{proof}
Let $\dB'$ denote the group specified in the proposition, and reserve $\dB$ for the group given by Definition \ref{defn:dBdefn}.  We define $\phi: \dB'\to\dB$ on generators:
\begin{align*}
\phi: \quad&T_{i}\mapsto T_{i},\quad i=1, \ldots, n,\\
&X_{i}\mapsto P_{i}^{-1}T_{(1\cdots i)}^{-1}K_{0}^{-1}T_{(1\cdots i)} ,\quad i=1, \ldots, n,\\
&Y_i \mapsto P_i T_{(i\cdots 1)}T_0 T_{(i\cdots 1)}^{-1} ,\quad i=1, \ldots, n.
\end{align*}

We leave it to the reader to verify that $\phi$ defines an isomorphism.
\end{proof}

\begin{remark} Along the lines of Remark \ref{rem:confn}, the isomorphism $\phi$ admits the following geometric interpretation:  every elliptic curve $E=\CC/\Lambda$ admits a $\ZZ_2$ action, $z\mapsto -z$.  Let $E^\circ$ denote the complement of the fixed points.  It is easy to see that $E^\circ/\ZZ_2$ is homeomorphic to $\mathbb{P}^1\backslash \{p_1,p_2,p_3,p_4\}$.  The generators $X_i$ and $Y_j$ of $\dB_n$ correspond to the horizontal and vertical cycles on $E$, as in \cite{J}, Figure 1.  The generators $T_0$, $T_n$, and $K_0$ correspond to loops around $p_1,p_2,$ and $p_3$, respectively, so that $(K_0P_1T_0)^{-1}$ corresponds to a loop around $p_4$.
\end{remark}

\begin{corollary}\label{Hecke}
The double affine Hecke algebra is a quotient of $\cK[\dB]$ by the relations:
$$Y_nT_n^{-1}\sim t_0, \quad T_n\sim t_n, \quad X_n^{-1}T_n^{-1}\sim u_n,$$
$$\quad v^{-1}Y_1^{-1}P_1X_1\sim u_0,\quad T_i\sim t\,\, (i=1,\ldots,n-1).$$
\end{corollary}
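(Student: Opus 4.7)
The plan is to derive this corollary directly from the isomorphism $\phi:\dB'\to\dB$ established in Proposition \ref{prop:newgen}, by translating each of the original Hecke relations defining $\HH_n(v,t,t_0,t_n,u_0,u_n)$ into a relation on the generators $X_i, Y_i, T_i$. The key observation that makes this translation almost automatic is that the Hecke relation $A\sim \alpha$ is invariant under conjugation: if $A=gBg^{-1}$ in $\dB$, then $(A-\alpha)(A+\alpha^{-1})=g(B-\alpha)(B+\alpha^{-1})g^{-1}$, so $A\sim\alpha$ if and only if $B\sim\alpha$.

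With this in hand, the proof reduces to four short computations using \eqref{eqn:Yidef} and \eqref{Xidef}, the definition $P_i=T_{(i\cdots n)}T_nT_{(n\cdots i)}$, and the convention $T_{(i\cdots i)}=1$. First, since $P_n=T_n$, one computes
\[\phi(Y_nT_n^{-1})=T_nT_{(n\cdots 1)}T_0T_{(n\cdots 1)}^{-1}T_n^{-1}=(T_nT_{(n\cdots 1)})\,T_0\,(T_nT_{(n\cdots 1)})^{-1},\]
which is conjugate to $T_0$, so $Y_nT_n^{-1}\sim t_0\iff T_0\sim t_0$. Second,
\[\phi(X_n^{-1}T_n^{-1})=T_{(1\cdots n)}^{-1}K_0T_{(1\cdots n)}P_nT_n^{-1}=T_{(1\cdots n)}^{-1}K_0T_{(1\cdots n)},\]
which is conjugate to $K_0$, so $X_n^{-1}T_n^{-1}\sim u_n\iff K_0\sim u_n$. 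Third, since $T_{(1\cdots 1)}=1$, one has $\phi(Y_1)=P_1T_0$ and $\phi(X_1)=P_1^{-1}K_0^{-1}$, yielding the exact identity
\[\phi\bigl(v^{-1}Y_1^{-1}P_1X_1\bigr)=v^{-1}T_0^{-1}P_1^{-1}K_0^{-1}=(vK_0P_1T_0)^{-1},\]
so this Hecke relation transports without even invoking conjugation-invariance. The remaining relations $T_n\sim t_n$ and $T_i\sim t$ for $i=1,\ldots,n-1$ are identical in both presentations under $\phi$.

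There is no substantive obstacle here; the entire argument is bookkeeping, and the only subtlety is ensuring that one uses the correct simplifications $P_n=T_n$ and $T_{(1\cdots 1)}=1$ when working with the boundary cases $i=1$ and $i=n$. Assembling the four computations with Proposition \ref{prop:newgen} gives a ring isomorphism between the quotient of $\cK[\dB]$ by the relations of the corollary and the DAHA as originally defined, completing the proof.
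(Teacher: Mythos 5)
Your proof is correct and is exactly the computation the paper leaves implicit (no proof is given for this corollary in the text; it is stated as an immediate consequence of Proposition \ref{prop:newgen}). The three short calculations — $Y_nT_n^{-1} = (T_nT_{(n\cdots 1)})\,T_0\,(T_nT_{(n\cdots 1)})^{-1}$, $X_n^{-1}T_n^{-1} = T_{(1\cdots n)}^{-1}K_0T_{(1\cdots n)}$, and $v^{-1}Y_1^{-1}P_1X_1 = (vK_0P_1T_0)^{-1}$ — together with conjugation-invariance of the quadratic Hecke relation, are precisely what is needed, and your handling of the boundary simplifications $P_n=T_n$ and $T_{(1\cdots 1)}=1$ is correct.
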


\begin{remark}\label{YiRem} The operators $T_0$ defined in Section \ref{sec:affinebraidaction} determine operators $Y_i$, via the isomorphism asserted in Proposition \ref{prop:newgen}.  It should be noted that these coincide with the inverse of the operators $Y_i$ which appeared in \cite{J} for the $A_{n-1}$ construction, except that those involved $\mathfrak{sl}_N$, rather than $\mathfrak{gl}_N$.\end{remark}

\subsection{The quasi-classical limit of Theorems \ref{maintheoremDAHA} and \ref{maintheoremAHA}}
In this section, we compute the quasi-classical limits of the operators appearing in Theorems \ref{maintheoremDAHA} and \ref{maintheoremAHA}, making use of the alternate presentation for the $C^\vee C_n$ DAHA from the previous section.  By comparing the results with the operators in \cite{EFM}, we can give a reproof of Theorems \ref{aff} and \ref{daff}.  This serves as a consistency check for both papers.

It is well known that the quasi-classical limit of the $R$-matrix of $\cU_{\tq}(\mathfrak{gl}_N)$ is
$$1 + \hbar r \mod \hbar^2,$$
where $r$ denotes the classical $R$-matrix for $\mathfrak{gl}_N$.   
Thus, for $i=1\ldots, n-1,$ the quasi-classical limit of $T_i$ is $$s_i(1+\hbar r_{i,i+1}) \mod \hbar^2.$$
By direct computation, the classical limit of $T_n$ is 
$$J' + \hbar \sigma\hat{J} \mod \hbar^2,$$ 
where $\hat{J} = 2 \sum_{i\leq p} E_i^i + \sum_{p+1\leq i\leq q} E_i^i,$
and $J'$ is the classical matrix from equation \eqref{eqn:J'}.

\begin{lemma} When $\U=\cU(\mathfrak{gl}_N)$, the operator $K_0$ acts as $(AJA^{-1})_j^i\ot E_i^j$.
\end{lemma}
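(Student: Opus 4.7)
The plan is to invoke the faithfulness of $\widetilde{\A}$ as a $D_\U$-module (Proposition~\ref{proplist}.1; see also Remark~\ref{faithfulA}) to reduce to the case $M=\widetilde{\A}$, and then to unwind the tangle-diagrammatic definition of $K_0$ (Figure~\ref{K0forA}) in the classical setting. For $\U=\cU(\mathfrak{gl}_N)$ the category $\cC$ is symmetric: every braiding $\sigma$ is the ordinary flip, $\widetilde{\A}$ is the commutative coordinate ring $\O(G)$ with its two commuting translation actions of $\U$, and $\mu_M$ reduces to multiplication by functions on $G$. Moreover the quantum matrix $(J^{\rho})^{-1}$ appearing in \eqref{K0def} degenerates at $\tq=1$ to the classical involution $J$, suitably conjugated by the matrix $g$ from \eqref{gdef} to match the block-diagonal presentation of $\kk=\mathfrak{gl}_p\times\mathfrak{gl}_q$.

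First I would compute the comodule maps $\Delta_V^R$ and $\Delta_V^L$ explicitly on the vector representation: at $\tq=1$ we have $a_j^i(V)=A_j^i$, the matrix coefficient function $g\mapsto g_j^i$ on $G$, while $\tilde a_j^i(V)=(A^{-1})_j^i$ is the corresponding coefficient of $g^{-1}$, arising from the antipode under the identification $^*V\cong V^*$. Next I would trace the diagram of Figure~\ref{K0forA} on an input $m\ot e_k$: the flip $\tau^{-1}$ transports $e_k$ past $m$, the coevaluations $\coev_V$ and $\coev_{^*V}$ open two strands enclosing $m$, the operator $J$ decorates one strand, and the final flip together with $\mu_M$ closes the diagram by evaluating the $V^*\otimes V$ and $^*V\otimes V$ pairings against the comodule structure.

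Carrying out the contractions, the upward strand pairs $V$ with the $V^*$-slot of a generator $W^*\bt W\subset\widetilde{\A}$ and contributes $A^i_\alpha$; the $J$-decoration contributes $J_{\alpha\beta}$; and the downward strand, going through the antipode via $^*V$, contributes $(A^{-1})^\beta_k$. Their product in $\O(G)$ is precisely the matrix coefficient $(AJA^{-1})^i_k$, which then acts on $M$ by multiplication via $\mu_M$. On the surviving $V$-strand the input $e_k$ is turned into $e_i$, giving exactly the matrix unit $E_i^k$ acting on $V$. Summing over $i,k$ yields $K_0=\sum_{i,k}(AJA^{-1})^i_k\ot E_i^k$ as asserted.

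The main obstacle will be the careful bookkeeping of the $\cC\bt\cC$ structure: one must keep straight which coevaluation lives on which side of $\bt$ (and hence which strand contributes $A$ versus $A^{-1}$ once the symmetric braidings collapse to flips), and verify that the $g$-conjugation relating $\kk$ and $\kk'$ absorbs any remaining sign or transpose ambiguity between the quantum $J^{\rho}$ appearing in \eqref{K0def} and the classical $J$ in the statement of the lemma.
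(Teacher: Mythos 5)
Your plan matches the paper's proof in all essentials: both reduce to the faithful module $M=\A$ (per Remark~\ref{faithfulA}), use the triviality of the braiding in the symmetric category $\cU(\mathfrak{gl}_N)$-mod to collapse all the $\sigma$'s, identify $^*V\cong V^*$, identify the classical limit of $J^\sigma$ with $J$ (the paper absorbs your $g$-conjugation by choosing a basis diagonalizing $J$), and then read off the contractions in coordinates -- $a^k_j$ from one strand, $J^l_k$ from the decoration, $S(a^i_l)=(A^{-1})^i_l$ from the other -- to obtain $\sum(AJA^{-1})^i_j\ot E_i^j$. The paper carries this out as a short chain of equalities using the generators $c_{v,w}$ of $\A$ and the multiplication $\mu$, which is exactly the bookkeeping your last paragraph flags as the remaining work.
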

\begin{proof}
The proof is by direct computation in the symmetric category $\cU(\g)$-mod, and relies on the triviality of the braiding to simplify $K_0$.  We may choose a basis diagonalizing $J$, and rewrite equation \eqref{K0def} in coordinates, ignoring appearance of $R$-matrices, identifying$\,^*V\cong V^*$ canonically, and noting that the classical limit (in this basis) of $J^\sigma$ is $J$:
\begin{align*}
K_0&= \sum c_{Jv_k\ot v^k, v_j\ot v^i}\ot E_i^j\\
  &= \sum c_{v^k,v_j}c_{Jv_k\ot v^i}\ot E_i^j\\
  &= \sum c_{v^k,v_j}J^l_kS(c_{v^i, v_l}) \ot E_i^j\\
  &=\sum a^k_j J^l_k S(a^i_l) \ot E_i^j\\
  &= \sum (AJA^{-1})_{ji}\ot E_i^j.
\end{align*}
\end{proof}

\begin{proposition} The classical limit of $X_1$ is $\sum (AJA^{-1}J)_j^i\ot E_i^j$\end{proposition}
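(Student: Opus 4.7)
The plan is to take the classical limit of $X_1 = P_1^{-1}K_0^{-1}$ by separately computing the classical limits of $P_1^{-1}$ and $K_0^{-1}$ and composing them on the first tensor factor; the simplification to this form comes from \eqref{Xidef} together with the convention $T_{(1\cdots 1)}=1$.

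First I would compute the classical limit of $P_1 = T_{(1\cdots n)}\,T_n\,T_{(n\cdots 1)}$. Using the limits $T_i\to s_i$ for $i<n$ and $T_n\to J_{V_n}$ in the basis that diagonalizes $J$ (as in the preceding lemma), I would conjugate $J_{V_n}$ by the permutation word $s_1 s_2 \cdots s_{n-1}$ to move the action of $J$ from the $V_n$-slot all the way to the $V_1$-slot, concluding that classical $P_1 = J_{V_1}$. Since $J^2 = I$, the same formula gives classical $P_1^{-1} = J_{V_1}$.

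Next, I would invoke the preceding lemma to see that classical $K_0$ is multiplication by the matrix $AJA^{-1}$ on the $V_1$-factor, with coefficients in $\A$. Because $(AJA^{-1})^2 = AJ^2A^{-1} = I$, the classical limit of $K_0^{-1}$ coincides with that of $K_0$. Composing the two limits in the order prescribed by the double affine braid group convention of Section \ref{sec:newgen}, the operator $X_1$ classically acts on $V_1$ as the matrix product $AJA^{-1}\cdot J = AJA^{-1}J$, which yields the claimed expression $\sum_{i,j}(AJA^{-1}J)_j^i\ot E_i^j$.

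The only real subtlety, rather than a substantive obstacle, is bookkeeping: one must maintain the basis convention of the previous lemma throughout, and compose the two matrix-valued limits in the order consistent with how products in $\dB_n$ act on the tensor product (so that the $J$ contributed by $P_1^{-1}$ appears on the right of $AJA^{-1}$, reproducing $AJA^{-1}J$ rather than $J AJA^{-1}$). Everything else reduces to routine matrix arithmetic on $V_1$ with coefficients in $\A$.
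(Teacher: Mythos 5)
Your proof takes essentially the same route as the paper: specialize \eqref{Xidef} to $X_1=P_1^{-1}K_0^{-1}$, degenerate $P_1^{-1}$ to $J_{V_1}$ (using the triviality of the classical braiding and $J^2=I$), invoke the preceding lemma for $K_0$, and multiply. The answer and the method match, so this is fine. One small caution on the parenthetical about the ordering: in the operator sense $P_1^{-1}$ is applied \emph{last}, so its matrix $J$ multiplies on the left of $\sum(AJA^{-1})_j^i E_i^j$ — the $J$ lands visibly on the \emph{right} of the final coefficient $(AJA^{-1}J)_j^k$ only after contracting $E_k^lE_i^j=\delta_{li}E_k^j$ and using $J=J^\top$ together with the lower-index-as-row convention $(M)_j^i=M_{ji}$, exactly as in the paper's displayed computation; this is indeed the bookkeeping you flag, but it is not a direct consequence of the product order in $\dB_n$.
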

\begin{proof}
We have $X_1=P_1^{-1}K_0^{-1}$.  The classical limit of $P_1^{-1}$ is $J_1$, by direct computation, using triviality of the braiding, and the fact that $J=J^{-1}$.  Thus, by the lemma, we have:
$$X_1=\sum(AJA^{-1})_j^i\ot J_{kl}E_k^lE_i^j= \sum (AJA^{-1}J)_j^k\ot E_k^j,$$
as desired.
\end{proof}

Define $\hat{y}_i\in\End_{\CC}(M\ot V^{\ot n})$ by the equation $Y_i\equiv 1 + \hbar \hat{y}_i \,(\mathrm{mod }\, \hbar^2)$.  As noted in Remark \ref{YiRem}, the operators $Y_i$ determined by our choice for $T_0$ and Proposition \ref{prop:newgen} coincide with the inverse of those of \cite{J}.  In order to prove Theorem \ref{maintheoremAHA}, we rescaled $T_0$ and thus $Y_1$ by $\tq^{\eta-N}$ and thus the quasi-classical limit of $y_1$ is computed by:

\begin{proposition}[see \cite{J}, Proposition 6.14]\label{yis} The operator $\hat{y}_1$ is given by:\footnote{in that construction, $t=\tq^k$ is the parameter for the quantum group $\cU_t(\mathfrak{sl}_N)$, and thus the factor $k$ multiplies $\tilde{y}_i$.  Also, since we work with $\mathfrak{gl}_N$, there is not the shift $\frac{i-1}{N}$, which occurs in Proposition 6.14 of \cite{J}, because $\Omega^{\mathfrak{sl}_N} = \Omega^{\mathfrak{gl}_N}-\frac1N \id_N\ot \id_N$} $$\hat{y}_i= - \Omega_{0i} - \sum_{j<i}s_{ij} + \frac{\eta -N}{2},$$
where is the $\Omega=\sum_{i,j}E_i^j\ot E_j^i\in \operatorname{Sym}^2(\g)^\g$ is the Casimir element for $\g=\mathfrak{gl}_N$.\end{proposition}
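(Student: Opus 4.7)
The plan is to compute $\hat{y}_1$ directly, and then propagate to all $\hat{y}_i$ by induction using the relation $Y_{i+1} = T_i^{-1} Y_i T_i^{-1}$ from Proposition \ref{prop:newgen}.

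For the base case, I would first simplify $Y_1 = P_1 T_{(1\cdots 1)} T_0 T_{(1\cdots 1)}^{-1} = P_1 T_0$ using the fact that $T_{(1\cdots 1)}=1$. Substituting the definition of the rescaled $T_0 = \alpha P_1^{-1}(\sigma_{V_1,M}\sigma_{M,V_1})^{-1}$ from Section \ref{AHAsec}, the $P_1^{\pm 1}$ factors cancel, yielding $Y_1 = \alpha\,(\sigma_{V_1,M}\sigma_{M,V_1})^{-1}$. Since $\sigma_{V,W} = \tau_{V,W} \circ R_{V,W}$, the composition $\sigma_{V_1,M}\sigma_{M,V_1}$ equals $R^{\op}_{M,V_1} R_{M,V_1}$. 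Using the standard quasi-classical expansion $\cR = 1 + \hbar r + O(\hbar^2)$, this product has classical limit $1 + \hbar\Omega_{0,1} + O(\hbar^2)$, where $\Omega = r + r^{\op}$ is the Casimir element. Combined with $\alpha = \tq^{\eta - N} = 1 + (\eta-N)\hbar + O(\hbar^2)$, this yields the constant-plus-Casimir form of $\hat{y}_1$ predicted by the formula, the precise numerical constant being set by the conventions on $\hbar$ and on the normalization of $\cR$.

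For the inductive step, I would assume the formula for $\hat{y}_i$ and apply $Y_{i+1} = T_i^{-1} Y_i T_i^{-1}$. Writing $Y_i = 1 + \hbar \hat{y}_i + O(\hbar^2)$ and using $T_i = s_i + \hbar\, s_i r_{i,i+1} + O(\hbar^2)$, I separate the expansion into two contributions: the pure factor $T_i^{-2}$, whose first-order term is $-\hbar\,(r_{i,i+1} + s_i r_{i,i+1} s_i) = -\hbar\,\Omega_{i,i+1}$; and the conjugation $s_i \hat{y}_i s_i$ at leading order. The crucial observation is that $\Omega_{i,i+1}$, acting on the tensor factors $V_i \ot V_{i+1}$ with $V = \CC^N$ the vector representation, coincides with the transposition $s_i$, because $\sum E_i^j \ot E_j^i$ acts as the flip on $\CC^N \ot \CC^N$. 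Meanwhile the conjugation converts $-\Omega_{0,i} \mapsto -\Omega_{0,i+1}$ and $-\sum_{j<i} s_{ij} \mapsto -\sum_{j<i} s_{i+1,j}$. Adding the new $-s_i = -s_{i+1,i}$ coming from $T_i^{-2}$ completes the sum $\sum_{j<i+1} s_{i+1,j}$, giving the desired formula for $\hat{y}_{i+1}$.

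The main obstacle will be bookkeeping the exact numerical constants — in particular reconciling the factor $(\eta-N)/2$ in the stated formula with the $(\eta-N)$ that arises naively from differentiating $\tq^{\eta-N}$. This is fundamentally a convention issue (the relationship between the parameter $\hbar$, the scaling of the classical $r$-matrix, and the normalization of $\cR$), and is resolved in the same way as in \cite{J}, Proposition 6.14. Indeed, the proof I have sketched is essentially that proof with two modifications recorded in the footnote: there is no $\frac{i-1}{N}$ shift since the Casimir $\Omega^{\mathfrak{gl}_N}$ does not require the correction $-\frac{1}{N}\id_N \ot \id_N$ used in the $\mathfrak{sl}_N$ setting; and the parameter $k$ of \cite{J} is absent because here $t = \tq$ directly. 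Apart from these adjustments, the inductive computation is formally identical.
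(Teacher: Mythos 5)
The paper gives no proof of this proposition — it simply cites \cite{J}, Proposition 6.14, with the footnote explaining the minor adaptations — so there is no in-text argument to compare against. Your reconstruction is the right one and matches what the cited proof does: reduce $Y_1$ to a rescaled double braiding, take its quasi-classical limit to get the $-\Omega_{01}$ term, then propagate by induction using $Y_{i+1}=T_i^{-1}Y_iT_i^{-1}$, where the key point is that $\Omega_{i,i+1}$ acts as the flip $s_i$ on the vector representation, converting the Casimir contribution from $T_i^{-2}$ into the missing $-s_{i+1,i}$ summand. The one point you leave soft — the overall normalization (a potential global factor of $2$, since with the paper's expansion $\tq=e^\hbar$ one actually has $r+r^{\op}=2\Omega$ for the $R$-matrix \eqref{eqn:R}, not $\Omega$) — is genuinely a matter of reconciling the definition of $\hat y_i$ with the convention for $\hbar$ inherited from \cite{J}; you are right to attribute it to conventions, but a fully careful write-up would pin down whether $\hat y_i$ is defined via $Y_i\equiv 1+\hbar\hat y_i$ with $\tq=e^\hbar$ or with $\tq=e^{\hbar/2}$ (equivalently, whether the classical $r$-matrix is normalized so that $r+r^{\op}=\Omega$), since with the former one literally obtains $(\eta-N)-2\Omega_{01}$ rather than the displayed formula. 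Apart from tightening that bookkeeping, the argument is correct and complete.
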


The following proposition allows us to compare $\hat{y_i}$ with the operators $y_i$ from Section \ref{dAHAsec}.  We have:

\begin{proposition}\label{yi-ident}
As an operator on the $(\mathfrak{k},\tilde\chi^\eta_\tau)$-invariants, we have
\begin{equation*}
y_{1}=-\Omega_{01}+\frac{\eta-N}{2}+\frac{(\tau-\sigma) - \mu N}{2}\gamma_1.
\end{equation*}
\end{proposition}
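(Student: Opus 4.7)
The plan is to decompose the $\mathfrak{gl}_N$-Casimir $\Omega = \sum_{i,j}E_i^j \otimes E_j^i$ along the Cartan decomposition $\mathfrak{g} = \mathfrak{k} \oplus \mathfrak{p}$ attached to the symmetric pair, and then use the $(\mathfrak{k}, \tilde\chi_\tau^\eta)$-invariance to push the action of the $\mathfrak{k}$-part from the $M$-factor onto the tensor factors $1,\dots,n$. Let $\Omega^{\mathfrak{k}} = \sum_{i,j\leq p}E_i^j\otimes E_j^i + \sum_{i,j > p}E_i^j\otimes E_j^i$ denote the Casimir of $\mathfrak{k}=\mathfrak{gl}_p\oplus\mathfrak{gl}_q$; then by inspection the sum $\sum_{s|t}E_s^t \otimes E_t^s$ appearing in \eqref{eqn:dAHAyi} is exactly $\Omega - \Omega^{\mathfrak{k}}$, so the first term of \eqref{eqn:dAHAyi} becomes $-\Omega_{01} + \Omega^{\mathfrak{k}}_{01}$.

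On $(\mathfrak{k},\tilde\chi_\tau^\eta)$-invariants, for any $X \in \mathfrak{k}$ one has $X_0 = \tilde\chi_\tau^\eta(X) - \sum_{k=1}^{n}X_k$; applied coordinate-wise to the two factors of $\Omega^{\mathfrak{k}} = \sum_a X^a \otimes X_a$ this yields
\[
\Omega^{\mathfrak{k}}_{01} = \Bigl(\sum_a \tilde\chi_\tau^\eta(X^a)X_a\Bigr)_1 - \sum_{k=1}^{n}\Omega^{\mathfrak{k}}_{k1}.
\]
Next I evaluate each term explicitly on $V = \mathbb{C}^p \oplus \mathbb{C}^q$. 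By non-degeneracy of the trace form on $\mathfrak{k}$, the element $\sum_a \tilde\chi_\tau^\eta(X^a)X_a$ equals the unique $\xi \in \mathfrak{k}$ with $\tilde\chi_\tau^\eta(X) = \tr(\xi X)$; using the explicit formula for $\tilde\chi_\tau^\eta$ derived in Section~\ref{Bdegsec} gives $\xi = \tfrac{\eta+\tau-\sigma}{2}I_p \oplus \tfrac{\eta-\tau+\sigma}{2}I_q$, which acts on $V$ as $\tfrac{\eta}{2} + \tfrac{\tau-\sigma}{2}\gamma_1$. The $\mathfrak{k}$-Casimir $\Omega^{\mathfrak{k}}_{11}$ acts by $p$ on $\mathbb{C}^p$ and $q$ on $\mathbb{C}^q$, giving $\tfrac{N}{2} + \tfrac{p-q}{2}\gamma_1$. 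For $k > 1$, the operator $\Omega_{1k}$ is the flip $s_{1k}$, and $\Omega^{\mathfrak{k}}_{1k}$ is its restriction to the block-diagonal subspace $(\mathbb{C}^p)^{\otimes 2}\oplus(\mathbb{C}^q)^{\otimes 2}$; since the projector onto this subspace is $\tfrac{1+\gamma_1\gamma_k}{2}$, one obtains $\Omega^{\mathfrak{k}}_{1k} = \tfrac{1}{2}(s_{1k}+s_{1k}\gamma_1\gamma_k)$.

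Substituting these three calculations into \eqref{eqn:dAHAyi} for $i=1$, the contributions $-\tfrac{1}{2}\sum_{k=2}^n(s_{1k}+s_{1k}\gamma_1\gamma_k)$ produced by $-\sum_{k=2}^n \Omega^{\mathfrak{k}}_{k1}$ cancel exactly with the terms $\tfrac{1}{2}\sum_{k>1}s_{1k}$ and $\tfrac{1}{2}\sum_{k\neq 1}s_{1k}\gamma_1\gamma_k$ already present, leaving only the scalar contribution $\tfrac{\eta}{2}-\tfrac{N}{2}=\tfrac{\eta-N}{2}$ and the $\gamma_1$-coefficient $\tfrac{\tau-\sigma}{2}-\tfrac{p-q}{2}+\tfrac{p-q-\mu N}{2}=\tfrac{(\tau-\sigma)-\mu N}{2}$, which is the claimed identity. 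The main subtlety will be purely bookkeeping: one must carefully track signs and verify that the $(p-q)\gamma_1$ contribution from $-\Omega^{\mathfrak{k}}_{11}$ exactly cancels against the $(p-q)\gamma_1$ piece of the EFM term $\tfrac{p-q-\mu N}{2}\gamma_1$.
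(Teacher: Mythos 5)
Your proof is correct and follows essentially the same route as the paper's: both use the $(\mathfrak{k},\tilde\chi^\eta_\tau)$-invariance identity to trade the $\mathfrak{k}$-Casimir between tensor slots, then evaluate the resulting $\xi_1$ and $\Omega^{\mathfrak{k}}_{11}$ contributions explicitly. The only organizational difference is that the paper applies the invariance to rewrite the symmetrizer sum $\frac12\sum_{k>1}(s_{1k}+s_{1k}\gamma_1\gamma_k)=\sum_{k>1}\Omega^{\mathfrak{k}}_{1k}$ in terms of slots $0$ and $1$, whereas you apply it directly to $\Omega^{\mathfrak{k}}_{01}$; the cancellations are identical in either direction.
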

\begin{proof}
Recall the summation convention $\sum_{ij}:= \sum_{i,j=1}^p + \sum_{i,j=p+1}^N$ from \cite{EFM}.  First, we set $i=1$ in equation \eqref{eqn:dAHAyi}, and simplify the summations over $k$:
\begin{eqnarray*}
S&=&\frac{1}{2}\sum_{k>1}s_{1k}+
\frac{1}{2}\sum_{k>1}s_{1k}\gamma_{1}\gamma_{k}\\
&=&\frac{1}{2}\sum_{k>1}\sum_{i,j}(E_i^j)_{1}\ot (E_j^i)_{k}
+\frac{1}{2}\sum_{k>1}\sum_{i,j}(E_i^jJ)_{1}\ot (E_j^iJ)_{k}
\\
&=&\sum_{k>1}\sum_{i,j}(E_i^j)_{1}\otimes (E_j^i)_{k}\\
&&\text{(applying the $\tilde\chi^\eta_\tau$-invariant property, to the tensor factors in $\kk$)}\\
&=& 
\sum_{i, j}(E_i^j)_1\tilde\chi^\eta_\tau(E_j^i) - \sum_{i,j}(E_i^j)_{0}\ot (E_j^i)_{1} 
- p\sum_{i\leq p}(E_i^i)_1 - q\sum_{i>p}(E_i^i)_1 \\
&=&\frac\eta2 + \frac{\tau-\sigma}{2}\big(\sum_{i\leq p}(E_i^i)_{1}-\sum_{i>p}(E_i^i)_1\big)- \sum_{i,j}(E_j^i)_{0}\ot (E_j^i)_{1}\\
&&\phantom{===} - p\sum_{i\leq p}(E_i^i)_1 - q\sum_{i>p}(E_i^i)_1.
\end{eqnarray*}
Thus, we may rewrite equation \eqref{eqn:dAHAyi}: 
\begin{eqnarray*}
y_{1}
&=& -\sum_{i,j}(E_i^j)_0\ot (E_j^i)_1 + \frac{\eta -N}{2} + \frac{(\tau-\sigma)-\mu N}{2} \gamma_1.
\end{eqnarray*}
\end{proof}

Finally, let:
\begin{align*}
\sigma&=p-q-\lambda N\\
\tau&=(\mu-\lambda)N + p-q\\
\nu-\rho&=(\lambda-\mu)N\\
\eta-\omega&=N+\frac{2n}{N} + \lambda(q-p)-2\mu p\\
\end{align*}

Comparing with \eqref{params}, we see that $k_1,k_2,k_3$ and $t$ from the degeneration of the DAHA agree with the parameters of Theorem \ref{daff}.  On the other hand, we have shown that the coideal subalgebras $\B_\sigma$ and $\B'_\rho$ both degenerate to the subalgebra $\cU(\mathfrak{gl}_{p}\times\mathfrak{gl}_{q})$, while the characters $\tilde\chi_{\tau}^\eta$ and $\tilde\lambda_\nu^\rho$ degenerate to the characters $\mu\chi$ and $(\mu-\lambda)\chi$, respectively, upon restriction to $\mathfrak{gl}_p\times\mathfrak{gl}_q$.

Thus we may recover Theorems \ref{aff} and \ref{daff} as follows.  $\eta$ records the spectrum of the center of $\mathfrak{gl}_N$ on $M$, which is discarded in \cite{EFM}, who consider instead $\mathfrak{sl}_N$.  Thus by summing the $F^{\sigma,\eta,\tau}_n(M)$ over all $\eta$, and $F^{\sigma,\eta,\tau}_{n,\rho,\omega,\nu}(M)$ over all $\eta$ and $\omega$, we recover the spaces of Theorems \ref{aff} and \ref{daff}\footnote{In that paper, the authors consider $\lambda\chi$-twisted $D$-modules, and $\mu$-invariants.  This coincides with $\lambda\chi$-ad-invariants, and $\mu\chi$ left-invariants, or equivalently $(\mu-\lambda)\chi$ right-invariants and $\mu\chi$ left-invariants.}, respectively as quasi-classical limits.  We have shown that the operators $X_i$ and $T_j$ degenerate to $x_i$ and $s_j$, respectively, for $i,j=1,\ldots n$, and we have shown that $\hat{y}_i=y_i$.  Thus the entire constructions of \cite{EFM} are recovered as quasi-classical limits of the present results.


\end{document}